\numberwithin{equation}{section}
\let\al=\alpha
\let\f=\frac
\let\om=\omega
\let\pa=\partial
\def\cF{{\mathcal F}}
\def\cE{{\mathcal E}}
\def\cD{{\mathcal D}}
\def\cP{{\mathcal P}}
\def\ta{\widetilde{a}}
\def\tb{\widetilde{b}}
\def\tu{\widetilde{u}}
\def\R{\mathbf R}
\def\N{\mathbf N}
\def\cE{\mathcal E}
\def\cF{\mathcal F}
\def\no{\noindent}
\def\eqdef{\buildrel\hbox{\footnotesize def}\over =}
\def\ef{\hphantom{MM}\hfill\llap{$\square$}\goodbreak}
\newcommand{\beq}{\begin{equation}}
\newcommand{\eeq}{\end{equation}}
\newcommand{\ben}{\begin{eqnarray}}
\newcommand{\een}{\end{eqnarray}}
\newcommand{\beno}{\begin{eqnarray*}}
\newcommand{\eeno}{\end{eqnarray*}}
\newtheorem{theorem}{Theorem}[section]
\newtheorem{lemma}[theorem]{Lemma}
\newtheorem{proposition}[theorem]{Proposition}
\newtheorem{remark}[theorem]{Remark}
\begin{document}

\title[Blow-up criterion for the 2-D Prandtl equation]
{Blow-up criterion for the 2-D Prandtl equation}

\author{Yue Wang}
\address{School of Mathematical Sciences, Capital Normal University, Beijing 100048, China}
\email{yuewang37@pku.edu.cn}

\author{Zhifei Zhang}
\address{School of Mathematical Sciences, Peking University, 100871, Beijing, P. R. China}
\email{zfzhang@math.pku.edu.cn}

\date{\today}

\begin{abstract}
In this paper, we consider the 2-D Prandtl equation with constant outer flow and monotonic data. We prove that if the curvature of the velocity distribution(i.e., $\pa_y^2u$) is bounded near the boundary, then the solution can not develop the singularity.
\end{abstract}

\maketitle

\section{Introduction}
In this paper, we study the Prandtl equation in $\R_+\times \R^2_+$:
\begin{equation}\label{eq:Prandtl}
  \left\{
  \begin{aligned}
    &\pa_t u+u \pa_x u +v\pa_{y}u-\pa_{y}^2u+\pa_xp=0,\\
    &\pa_xu+\pa_y v=0,\\
    &u|_{y=0}=v|_{y=0}=0\quad\mbox{and}\quad \displaystyle\lim_{y\to+\infty} u(t,x,y)=U(t,x),\\
     &u|_{t=0}= u_0,
  \end{aligned}
  \right.
\end{equation}
where $(u,v)$ denotes the tangential and normal velocity of the boundary layer flow, and  $(U(t,x), p(t,x))$
{is} the values on the boundary of the tangential velocity and pressure of the outer flow, which satisfies the Bernoulli's law
\[\pa_tU+U\pa_x U+\pa_x p=0.\]
This system introduced by Prandtl  is the foundation of the boundary layer theory  \cite{Olei}.
\smallskip

Due to the derivative loss induced by nonlinear term $v\pa_yu$, the well-posedness of the Prandtl equation is still open for general data.
For monotonic data,  Oleinik \cite{Olei} proved the local
existence and uniqueness of classical solutions to the Prandtl equation. With the additional favorable pressure gradient, Xin and Zhang
\cite{XZ} proved the global existence of weak solutions.
Sammartino and Caflisch \cite{SC1, SC2} established the local well-posedness  of the Prandtl equation for analytic data, and justified the inviscid limit of the 2-D Navier-Stokes equations with nonslip boundary condition.

Recently,  Alexandre et al. \cite{AWXY} and Masmoudi-Wong \cite{MW} independently proved the well-posedness of the Prandtl equation for monotonic data in Sobolev spaces by developing direct energy method.
Chen, Wang and Zhang \cite{CWZ}  gave a simple proof based on the paralinearized technique. On the other hand, G\'{e}rard-Varet and Dormy \cite{GD} proved the ill-posedness in Sobloev spaces for the linearized Prandtl equation around a class of non-monotonic shear flows. Instead of
Sobolev spaces, G\'{e}rard-Varet and Masmoudi \cite{GM}  proved the well-posedness of the Prandtl equation (\ref{eq:Prandtl})  for a class of non-monotonic data in Gevrey class $\f 74$.  Later on, Chen-Wang-Zhang \cite{CWZ-A} and Li-Yang \cite{LY} independently proved the well-posedness in Gevrey class 2, which should be optimal by the ill-posedness result in \cite{GD}.

Up to now, we should have good understanding for the well-posedness  theory of the 2-D Prandtl equation. Then an important question is whether local (in time) smooth solution can develop a singularity(or separation) in a finite time. For analytic data without monotonicity, E and Engquist \cite{EE} proved finite time blow-up of analytic solution for the Prandtl equation. In fact, even for small analytic data, Zhang and the second author \cite{ZZ} can only establish the long time well-posedness. See also \cite{KVW, IV} for the blow-up and almost global existence result.

However, it seems more physical to consider the solution initiated from a monotonic data. In experiments and numerics \cite{Smith, CC, CSW, VS}, it was observed that the adverse pressure gradient may lead to the phenomena of the boundary layer separation for monotonic flows. To our knowledge, there seems no rigorous mathematical proof. Recently, Wang and Zhu studied the separation problem in \cite{WZ}, where for given $T>0$, outer flow and adverse pressure gradient, they proved that the solution must have a separation point $(t^*,x^*,0), t^*<T$ for a class of monotonic data, if the associated smooth solution exists in $[0,T)$. Let us mention recent important progress on the separation for the 2-D steady Prandtl equation made by Dalibard and Masmoudi  \cite{DM}.

In \cite{XZ},  the authors proved that the separation does not occur even for weak solution of the Prandtl equation under the favorable pressure gradient. Furthermore, they claimed that weak solution constructed in \cite{XZ} should be smooth. In this paper, we are concerned with whether smooth solution of the Prandtl equation can not develop a singularity as long as the separation does not occur.\smallskip

To study this question, we consider a simple case: the outer flow $U(t,x)=1, p=0$ so that the pressure gradient is favorable. We will consider a class of monotonic data $u_0(x,y)$, which holds that there exist $c, C>0$ so that
\ben\label{ass:data-H1}
\pa_yu_0(x,y)\ge ce^{-y}\quad \text{for}\quad y\in \R_+,
\een
and for $k=0,1,2$,
\ben\label{ass:data-H2}
|\pa_y^k(u_0(x,y)-u_0^s(y))|+|\pa_x\pa_yu_0(x,y)|\le Ce^{-y}\quad \text{for}\                                               (x,y)\in \R^2_+,
\een
where $u_0^s(y)\in C^2(\R_+)$ satisfies
\ben\label{ass:chi}
u_0^s(0)=0,\quad |\pa^k(u_0^s(y)-1)|\le Ce^{-y}\quad  \text{for}\  y\in \R_+  \text{ and} \ k=0,1,2.
\een
Let $u^s(t,y)$ be the solution of the heat equation
\begin{equation}\label{eq:shear}
  \left\{
  \begin{aligned}
   &\pa_t u^s -\pa_{y}^2u^s=0,\\
    & u^s|_{y=0}=0\quad\mbox{and}\quad \displaystyle\lim_{y\to\infty} u^s(t,y)=1,\\
     &u^s|_{t=0}=u_0^s(y).
  \end{aligned}
  \right.
\end{equation}
Then $(u^s(t,y),0)$ is a shear flow solution of the Prandtl equation \eqref{eq:Prandtl}. Set $\tu(t,x,y)=u(t,x,y)-u^s(t,y)$. Then $\tu$ satisfies the following equations
\begin{equation}\label{eq:Prandtl-3}
  \left\{
  \begin{aligned}
    &\pa_t \tu+u\pa_x\tu+v\pa_{y}u-\pa_{y}^2\tu=0,\\
    &\pa_x\tu+\pa_y v=0,\\
   &\lim_{y\to+\infty}\tu(t,x,y)=0,\quad v(t,x,y)|_{y=0}=0,\\
   &\tu|_{t=0}= \tu_0(x,y)=u_0(x,y)-u_0^s(y).
  \end{aligned}
  \right.
\end{equation}

In the first part of this paper, we prove the local well-posedness  of the Prandtl equation in Sobolev space and give a blow-up criterion of the solution.

\begin{theorem}\label{thm:main1}
Assume that the initial data satisfies \eqref{ass:data-H1}-\eqref{ass:data-H2} and  $\tu_0\in H^{3,1}_\mu\cap H^{1,2}_\mu$ with $\tu_0(x,0)=0$. There exists $T>0$ and a unique solution of  the Prandtl equation  (\ref{eq:Prandtl}) in  $[0,T]$, which satisfies
\beno
&&c_1e^{-y}\le \pa_yu(t,x,y)\le C_1e^{-y}\quad \textrm{for }\,\,(t,x,y)\in [0,T]\times\R^2_+,\\
&&|\pa_y^2u(t,x,y)|+|\pa_x\pa_yu(t,x,y)|\le C_1e^{-y}\quad \textrm{for }\,\,(t,x,y)\in [0,T]\times\R^2_+,\\
&&\tu\in L^\infty\big(0,T;H^{3,1}_\mu\cap H^{1,2}_\mu\big)\cap
L^2\big(0,T; H^{1,3}_\mu\big),
\eeno
for some $c_1, C_1>0$. Let $T^*$ be the maximal existence time of the solution. If the solution satisfies the following conditions
\ben
&c e^{-y}\le \pa_yu(t,x,y)\le Ce^{-y}\quad \textrm{for }\,\,(t,x,y)\in [0,T^*)\times\R^2_+,\label{ass:mono}\\
&\sup_{t\in [0,T^*)}A(t)\le C,\label{ass:A}
\een
for some $c, C>0$,  then it can be extended after $t=T^*$. Here
\begin{align*}
A(t)=\sup\limits_{(s,x,y)\in [0,t]\times\R^2_+}e^y\Big(\sum_{k=0}^2|\pa_{y}^k{(u-1)}|+(1+y)^{-1}\big(|\pa_x u|+|\pa_x\pa_y u|\big)\Big),\end{align*}
and $H^{k,\ell}_\mu$ is the weighted anisotropic Sobolev space defined
in next section.
\end{theorem}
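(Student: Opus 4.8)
The plan is to establish Theorem~\ref{thm:main1} by a single weighted energy scheme: the local existence and uniqueness follow the now-standard energy method for monotonic Prandtl flows (as in \cite{AWXY, MW, CWZ}) transplanted to the weighted anisotropic spaces $H^{k,\ell}_\mu$, while the continuation criterion---the genuinely new statement---is obtained by showing that, under \eqref{ass:mono}--\eqref{ass:A}, the $H^{3,1}_\mu\cap H^{1,2}_\mu$ norm of $\tu$ stays finite on $[0,T^*)$. Throughout write $w=\pa_yu=\pa_yu^s+\pa_y\tu$ for the tangential vorticity; \eqref{ass:data-H1} (resp. \eqref{ass:mono}) ensures $w\sim e^{-y}$ uniformly, so that division by $w$ costs nothing, and this is the structural fact that rescues the monotonic case from the derivative loss in $v\pa_yu$.

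\emph{Step 1: the good unknown.} Differentiating \eqref{eq:Prandtl-3}, for each multi-index with $|\al|+\beta\le3$, $\beta\le2$, the quantity $\pa_x^\al\pa_y^\beta\tu$ solves a transport--diffusion equation whose only term that loses a derivative is $\pa_y\big(\pa_x^\al\pa_y^\beta\tu\big)$ hit by the normal velocity together with the $\pa_x$-integral defining $v$. Following the cancellation of \cite{CWZ, MW}, I would work with
\[
g_{\al,\beta}=w\,\pa_y\!\Big(\frac{\pa_x^\al\pa_y^\beta\tu}{w}\Big)=\pa_y\big(\pa_x^\al\pa_y^\beta\tu\big)-\frac{\pa_yw}{w}\,\pa_x^\al\pa_y^\beta\tu,
\]
which satisfies $\pa_tg_{\al,\beta}+u\pa_xg_{\al,\beta}+v\pa_yg_{\al,\beta}-\pa_y^2g_{\al,\beta}=\cR_{\al,\beta}$, where the remainder $\cR_{\al,\beta}$ contains no $\pa_y$ of the top derivative and is a sum of products of $v$, lower-order derivatives of $\tu$, and the coefficients $\pa_yw/w$, $\pa_y^2w/w$, $\pa_xw/w$, $\pa_x(\pa_yw/w)$. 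Since $\pa_yw=\pa_y^2u$ and $\pa_xw=\pa_x\pa_yu$, assumption \eqref{ass:A} gives the pointwise bounds $|\pa_yw/w|\le c^{-1}A(t)$ and $|\pa_xw/w|\le c^{-1}(1+y)A(t)$, etc.; $\pa_y^2w=\pa_y^3u$ enters only linearly and is absorbed by the parabolic term (this is where the $L^2_tH^{1,3}_\mu$ bound is consumed).

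\emph{Step 2: weighted energy estimate.} Multiply the $g_{\al,\beta}$-equation by the weight of $H^{k,\ell}_\mu$ times $g_{\al,\beta}$ and integrate over $\R^2_+$: the transport part is disposed of by $\pa_xu+\pa_yv=0$ and $v|_{y=0}=0$, the diffusion part yields $\|\pa_yg_{\al,\beta}\|_{L^2_\mu}^2$, and commutators with the weight are lower order. The normal velocity is handled through $v(t,x,y)=-\int_0^y\pa_x\tu(t,x,y')\,dy'$: a Hardy inequality in the weighted space gives $\big\|v/(1+y)\big\|_{L^2_\mu}\lesssim\|\pa_x\tu\|_{L^2_\mu}$ and similarly with one extra $\pa_x$, so every product in $\cR_{\al,\beta}$ of $v$ with a coefficient carrying a factor $(1+y)$---which is exactly how $\pa_xu$, $\pa_x\pa_yu$ enter via $A(t)$---is, after using $(1+y)^2e^{-y}\le C$, bounded by $CA(t)$ times a norm already present in the energy. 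Summing over $|\al|+\beta\le3$, $\beta\le2$ and reconstructing $\pa_y(\pa_x^\al\pa_y^\beta\tu)$ from $g_{\al,\beta}$ by the uniform lower bound on $w$, one arrives at
\[
\frac{d}{dt}\Big(\|\tu\|_{H^{3,1}_\mu}^2+\|\tu\|_{H^{1,2}_\mu}^2\Big)+\|\tu\|_{H^{1,3}_\mu}^2\le C\big(1+A(t)\big)^{N}\Big(\|\tu\|_{H^{3,1}_\mu}^2+\|\tu\|_{H^{1,2}_\mu}^2\Big)
\]
for some integer $N$; with \eqref{ass:A}, Gronwall's lemma closes the estimate on $[0,T^*)$. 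The same computation on a short time interval, combined with propagation of the monotonicity and curvature bounds by the maximum principle for the equations of $w$, $\pa_yw$, $\pa_xw$ (using only $H^3$-type control of the drift), yields the local existence and the stated pointwise bounds, so this one scheme covers both halves of the theorem.

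\emph{Step 3 and the main obstacle.} The uniform bound of Step~2 together with the embedding of $H^{k,\ell}_\mu$ into the relevant weighted $L^\infty$ show that $\tu(t)$ converges in $H^{3,1}_\mu\cap H^{1,2}_\mu$ as $t\uparrow T^*$ to a limit $\tu(T^*)$ with $\tu(T^*,x,0)=0$, and by continuity \eqref{ass:mono}, \eqref{ass:A} and the $\pa_y^2u$, $\pa_x\pa_yu$ bounds persist at $t=T^*$; applying the local existence part with initial time $T^*$ produces a solution on $[T^*,T^*+\de]$, contradicting the maximality of $T^*$ and proving the extension. The crux is Steps~1--2: one must verify that the only derivative-losing term is removed by $g_{\al,\beta}$ and then track, through the weight $\mu$ and the exponential decay, that every term spawned by this substitution---especially those carrying the linearly growing factor $v$---is matched by the $(1+y)^{-1}$-weighted, exponentially small coefficients that $A(t)$ controls, with $\pa_y^3u$ appearing only inside the $L^2_tH^{1,3}_\mu$ term. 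Getting this weight bookkeeping exactly right, so that nothing outside the list defining $A(t)$ is ever needed, is where the real work lies; the continuation argument is then routine.
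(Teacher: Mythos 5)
You take a genuinely different route from the paper. Your good unknown $g_{\al,\beta}=\pa_y\big(\pa_x^\al\pa_y^\beta\tu\big)-\frac{\pa_yw}{w}\pa_x^\al\pa_y^\beta\tu$ is the pointwise (Masmoudi--Wong / Alexandre--Wang--Xu--Yang) cancellation, applied derivative by derivative; the paper instead paralinearizes once and works with the single paraproduct good unknown $w=\pa_y T_{1/\pa_yu}\tu$, which satisfies the clean paralinearized equation $\pa_tw+T_u\pa_xw-\pa_y^2w=\pa_yF_1+F_2$ and is then estimated in $H^{3,0}_\nu$. Both decompositions are standard in the monotonic-Prandtl literature and both do remove the derivative-losing term $\pa_x^mv\,\pa_yu$. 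Your pointwise version actually has better weight behavior --- since $\pa_yw/w=O(1)$, the unknown decays like $e^{-y}$ and can be measured directly with the weight $\mu=e^{y/2}$ --- whereas $T_{1/\pa_yu}$ applied to $\tu$ does not exactly cancel the $e^{y}$ growth of $1/\pa_yu$, which is why the paper must juggle three weights $\mu=e^{y/2}$, $\nu=e^{-y/2}$, $\om=e^{2y/3}$ and a separate comparison lemma (Lemma~\ref{lem:relation}) to transfer estimates back to $\tu$. The price you pay is a messier nonlinear commutator calculus; the paraproduct version pushes the roughness into $F_1,F_2$, which are then controlled by $A(t)$ via the paraproduct estimates.

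Two points in your write-up should be corrected before this could pass as a complete proof. First, the asserted Hardy inequality $\|v/(1+y)\|_{L^2_\mu}\lesssim\|\pa_x\tu\|_{L^2_\mu}$ is false: from $v=-\int_0^y\pa_x\tu\,dy'$ one only gets $\|v\|_{L^\infty_y}\lesssim\|\pa_x\tu\|_{L^2_{y,\mu}}$, and $e^{y/2}/(1+y)\notin L^2(\R_+)$, so the left side diverges for generic bounded $v$. What actually closes the estimate is that $v$ (bounded, not decaying) always appears multiplied by a factor controlled by $A(t)$ that decays like $e^{-y}$, so the product is integrable against $e^{y/2}$; this is how the paper handles every occurrence of $v$. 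Second, the claimed differential inequality
$\frac{d}{dt}(\|\tu\|_{H^{3,1}_\mu}^2+\|\tu\|_{H^{1,2}_\mu}^2)+\|\tu\|_{H^{1,3}_\mu}^2\le C(1+A)^N(\cdots)$
is not obtained so directly: estimating $u\pa_x\tu_t$ and $v\pa_yu_t$ produces a term $\|\tu\|_{H^{2,2}_\mu}$ that sits between the energy and the dissipation, and one needs the interpolation $\|\tu\|_{H^{2,2}_\mu}\le\|\tu\|_{H^{3,1}_\mu}^{1/2}\|\tu\|_{H^{1,3}_\mu}^{1/2}$ plus Young's inequality to absorb it (this is exactly the role of $\cE,\cD$ and of \eqref{eq:interpolation} in the paper's Section~6). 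With those two repairs, your scheme plausibly closes, so the difference is one of method rather than of substance.
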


In the following theorem, we study whether the conditions \eqref{ass:mono} and \eqref{ass:A} hold for the local smooth solution constructed in Theorem \ref{thm:main1}.

\begin{theorem}\label{thm:main2}
Let $u$ be a solution in $[0,T^*)$ constructed in Theorem \ref{thm:main1}.
Then there exists $c_0, C_0$ so that
\beno
c_0e^{-y}\le \pa_yu(t,x,y)\le C_0e^{-y}\quad \textrm{for }\,\,(t,x,y)\in [0,T^*)\times\R^2_+.
\eeno
Moreover, if
\ben\label{ass:cur}
|\pa_y^2u(t,x,y)|\le C \quad \textrm{for }\,\,(t,x)\in [0,T^*)\times\R, \ y\in [0,\delta]\label{ass:sep}
\een
for some $C>0, \delta>0$, then it holds that
\beno
A(t)\le C_*\quad \text{for any}\,\,t\in \big[0,T^*\big),
\eeno
where $C_*$ is a constant depending only on $C_0, c_0, T^*, \delta$.
\end{theorem}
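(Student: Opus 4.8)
The plan is to propagate the three quantities making up $A(t)$ --- the bounds on $\pa_y^k(u-1)$ for $k=0,1,2$, and the weighted bounds on $\pa_x u$ and $\pa_x\pa_y u$ --- using maximum-principle arguments on the convection-diffusion equations they satisfy, taking the curvature hypothesis \eqref{ass:cur} as the only extra input beyond the conclusions of Theorem \ref{thm:main1} and the monotonicity $c_0e^{-y}\le \pa_y u\le C_0 e^{-y}$ that the theorem also asserts. First I would re-derive that monotonicity bound: writing $\tw=\pa_y u$, one differentiates \eqref{eq:Prandtl} in $y$ to get $\pa_t\tw+u\pa_x\tw+v\pa_y\tw-\pa_y^2\tw=0$, a linear parabolic equation for $\tw$ with no zeroth-order term, so $\min$ and $\max$ of $e^{y}\tw$ (or a suitable barrier $c_0e^{-y}$, $C_0e^{-y}$ --- one checks $\pm e^{-y}$ are super/sub-solutions up to controllable errors coming from $u^s$ and the far-field) are preserved; this uses the boundary data $\pa_y u|_{y=0}$, which must itself be controlled, and the initial assumptions \eqref{ass:data-H1}-\eqref{ass:data-H2}. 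This step should be essentially the argument already in Oleinik's framework / \cite{AWXY,MW} adapted to the shear-flow-subtracted variable, so I expect it to be routine.

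Next, the curvature term $\pa_y^2 u$: let $\om=\pa_y^2 u$, which satisfies $\pa_t\om+u\pa_x\om+v\pa_y\om-\pa_y^2\om=-2\pa_x\pa_y u\,\pa_y u$ (differentiating the vorticity equation once more in $y$; the forcing comes from $\pa_y(v\pa_y u)=\pa_y v\,\pa_y u+v\pa_y^2u=-\pa_x u\,\pa_y u+\ldots$, so one must track a $\pa_x u\,\pa_y u$-type source). The hypothesis \eqref{ass:cur} hands us the bound $|\om|\le C$ on the strip $y\in[0,\delta]$ for free; on the complementary region $y\ge\delta$ we have $\pa_y u$ exponentially small from the already-established monotonicity bound, and we can run a weighted maximum principle with weight $e^{y}$ using $\delta$ as an interior boundary where the bound is known. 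The role of \eqref{ass:cur} is precisely to supply the missing boundary information at $y=\delta$ (and implicitly near $y=0$, where the $e^y$ weight is harmless) so that the maximum principle on $[\delta,\infty)$ closes; without it the bound $e^y\om$ could in principle grow. The $k=0,1$ cases ($|u-1|$ and $|\pa_y u|$) follow from integrating $\om$ in $y$ against the far-field condition, or directly from the monotonicity bound, so no new idea is needed there.

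For the horizontal derivatives, set $a=\pa_x u$; differentiating \eqref{eq:Prandtl} in $x$ gives $\pa_t a+u\pa_x a+v\pa_y a-\pa_y^2 a=-(\pa_x u)^2-\pa_x v\,\pa_y u=-a^2+\pa_x\!\big(\int_0^y\pa_x u\big)\pa_y u$, a nonlocal-looking source; here the standard trick (as in the well-posedness proofs) is to use the Oleinik-type substitution or to work with $a/\pa_y u$ or with the weight $(1+y)$ to absorb the $v$-contribution, since $v=-\int_0^y\pa_x u$ grows at most linearly. Concretely I would bound $b:=e^y(1+y)^{-1}\pa_x u$ and $d:=e^y(1+y)^{-1}\pa_x\pa_y u$ by a Gronwall argument: each satisfies a parabolic inequality whose source terms are quadratic in $(b,d)$ times polynomially-weighted factors that are controlled by the monotonicity bound and by the curvature bound just obtained, so $\frac{d}{dt}(\|b\|_\infty+\|d\|_\infty)\le P(\|b\|_\infty+\|d\|_\infty)$ for a polynomial $P$ with coefficients depending on $C_0,c_0,C,\delta$, giving a bound on $[0,T^*)$ that blows up only at a time $\ge$ something explicit --- but since $T^*$ is finite and fixed, the bound is finite on all of $[0,T^*)$, i.e. $A(t)\le C_*(C_0,c_0,T^*,\delta)$. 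The main obstacle I anticipate is the horizontal-derivative step: the source $-(\pa_x u)^2$ is good (it has the right sign for an upper bound) but the $v\pa_y$-transport and the $\pa_x v\,\pa_y u$ term are genuinely troublesome because $v$ is only controlled after we know $\pa_x u$, so the estimate is coupled and one must choose the weights $(1+y)^{\pm 1}$ and possibly a further division by $\pa_y u$ carefully so that everything closes; getting the geometry of the cutoff at $y=\delta$ to mesh with these weights (so that the interior-boundary term at $y=\delta$ is dominated by the already-known curvature bound) is the delicate bookkeeping. Everything else is a maximum principle plus Gronwall.
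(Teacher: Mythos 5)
Your proposal works in physical $(t,x,y)$ coordinates throughout, applying maximum principles to the equations for $\partial_y u$, $\partial_y^2 u$, $\partial_x u$ and $\partial_x\partial_y u$ plus a Gronwall argument.  The paper's route is entirely different: it passes to the Crocco variables $(\eta,\xi,t)=(u,x,t)$, $w=\partial_y u$, in which \eqref{eq:Prandtl} becomes the kinetic-type scalar PDE $-\partial_t w-\eta\partial_\xi w+w^2\partial_{\eta\eta}^2 w=0$, and then proves the pointwise estimates $c_1(1-\eta)\le w\le C(1-\eta)$, $|\partial_\eta w|\le C_*$, $|\partial_\xi w|\le C_*(1-\eta)$ using (a) barrier functions in Crocco variables (Prop.~\ref{prop:mono}), (b) H\"older and Schauder regularity for hypoelliptic equations (Props.~\ref{prop:holder}, \ref{localsch}), (c) Bernstein's method for $\partial_\xi w$, and (d) rescaling arguments near $\eta=1$ and $\eta=0$, with hypothesis \eqref{ass:cur} entering as the bound \eqref{bdetaw} on $\partial_\eta w$ near $\eta=0$.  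Only at the very end are the estimates pulled back via $\partial_y^2 u=w\partial_\eta w$, $\partial_x u=w\int_0^\eta\partial_\xi w/w^2$, $\partial_{xy}u=\partial_\eta w\,\partial_x u+\partial_\xi w$.

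There is a genuine gap in your physical-space approach at the horizontal-derivative step, and it is not merely ``delicate bookkeeping.''  Differentiating \eqref{eq:Prandtl} in $x$ gives, with $a=\partial_x u$,
\[
\partial_t a+u\partial_x a+v\partial_y a-\partial_y^2 a=-a^2+\Big(\int_0^y\partial_x a\,dy'\Big)\partial_y u,
\]
and the last source term involves $\partial_x^2 u$, which is \emph{not} one of the quantities entering $A(t)$.  The same second $x$-derivative appears when you differentiate the equation for $\partial_y u$ in $x$ to bound $\partial_{xy}u$.  Thus the closed system of pointwise estimates you envision does not actually close within $\{|u-1|,|\partial_y u|,|\partial_y^2u|,|\partial_x u|,|\partial_{xy}u|\}$: you are pushed up the hierarchy to $\partial_x^2 u$, $\partial_x^3 u$, and so on.  (You also note that $-a^2$ ``has the right sign for an upper bound,'' but one needs a two-sided bound on $\partial_x u$ for $A(t)$, and on the other side this sign is unfavorable.)  It is precisely this nonlocal $v$/$\partial_x v$ structure that the Crocco transformation eliminates: in Crocco variables there is no vertical velocity, no integral in $\eta$, and no $\partial_\xi^2$, so interior gradient bounds for $w$ in terms of $\|w\|_{L^\infty}$ follow from hypoelliptic regularity without differentiating again in $\xi$.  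A related but smaller inaccuracy: you place the role of \eqref{ass:cur} at an ``interior boundary'' $y=\delta$, whereas in the paper it is used \emph{inside} the strip $y\in[0,\delta]$ (equivalently $\eta$ near $0$) to produce the bound $|\partial_\eta w|\le C$ feeding the Bernstein auxiliary function for $\partial_\xi w$; the Neumann condition $\partial_\eta w|_{\eta=0}=0$ is what makes a direct argument near $\eta=0$ delicate, and \eqref{ass:cur} substitutes for it.
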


\begin{remark}
The condition \eqref{ass:cur} means that the curvature of the velocity distribution is bounded near the boundary. This result shows that it seems possible that the curvature of the velocity distribution blows up near the boundary $y=0$ even if the separation does not occur.\smallskip

To control the part of $y\ge \delta$ in $A(t)$, the condition \eqref{ass:cur}
is unnecessary.
 \end{remark}

The proof of Theorem \ref{thm:main1}  used  the paralinearization method
so that we can control the higher order Sobolev norm of the solution in terms of the lower order derivatives of the solution, i.e., $A(t)$. To avoid the derivative loss in the energy estimate process, the key idea is  to introduce a good unknown $w=\pa_y\Big(T_{\f 1 {\pa_yu}}\tu\Big)$ inspired by \cite{AWXY, MW}, which satisfies a good equation like
\beno
\pa_t w+T_u \pa_x w -\pa_{y}^2 w=F.
\eeno
Here $T_u$ is the Bony's paraproduct.\smallskip

The proof of Theorem \ref{thm:main2} used the different framework. As in \cite{XZ}, we will use the Crocco transformation which transforms the Prandtl equation into a quasilinear degenerate parabolic system. Using the interior regularity results for the kinetic type equation from \cite{WZL, GI, IM}, the interior gradient estimate is relatively easy. The boundary gradient estimates are highly non-trivial. For this, we will use the maximum principle by constructing good auxiliary functions.  The estimate near $y=0$ is more delicate, since the new unknown $w=\pa_y u$ satisfies the Neumann boundary condition at $y=0$. This is main reason why we need to impose the extra condition
\eqref{ass:cur} near $y=0$.

\section{Functional spaces and Paraproduct}

\subsection{Weighted anisotropic Sobolev spaces}

Let $\om(y)$ be a nonnegative function in $\R_+$.
We introduce the weighted $L^p$ norm:
\beno
\|f\|_{L^p_\om}=\|\om(y)f(x,y)\|_{L^p},\quad \|f\|_{L^p_{y,\om}}= \|\om(y)f(y)\|_{L^p}.
\eeno

Let $k,\ell\in \N$, the weighted anisotropic Sobolev space $H^{k,\ell}_\om$ consists of all functions $f\in L^2_\om$ satisfying
\beno
\|f\|_{H^{k,\ell}_\om}^2=\sum_{\al\le k}\sum_{\beta\le \ell}\|\pa_x^\al\pa_y^\beta f\|_{L^2_\om}^2<+\infty.
\eeno
We denote by $H^\ell_{y,\om}$ the weighted Sobolev space in $\R_+$, which consists of all functions $f\in L^2_{y,\om}$ satisfying
\beno
\|f\|_{H^{\ell}_{y,\om}}^2\eqdef \sum_{\beta\le \ell}\|\pa_y^\beta f\|_{L^2_{y,\om}}^2<+\infty.
\eeno
When $\om=1$, we denote $H^{k,\ell}_\om$ by $H^{k,\ell}$, and $H^\ell_{y,\om}$ by $H^\ell_y$ for the simplicity. \smallskip

The following interpolation inequality will be used: if $\pa_y^\ell u|_{y=0}$ or $\pa_y^{\ell-1}u|_{y=0}=0$, then we have
\ben\label{eq:interpolation}
\|u\|_{H^{k,\ell}}\le \|u\|_{H^{k-1,\ell+1}}^\f12\|u\|_{H^{k+1,\ell-1}}^\f12.
\een

In this paper, we will use the following weights
\beno
\mu(y)=e^{\f y2},\quad \nu(y)=e^{-\f y2},\quad \om(y)=e^{\f 23y}.
\eeno

\subsection{Paraproduct}
We first introduce the paraproduct decomposition in $\R$.
We define
\beno
\begin{split}
&\Delta_jf=\cF^{-1}(\varphi(2^{-j}\xi)\widehat{f})\quad
S_jf=\cF^{-1}(\chi(2^{-j}\xi)\widehat{f}\quad \textrm{for}\,\, j\ge 0,\\
&\Delta_{-1}f=S_0f, \quad S_jf=S_0f\quad \textrm{for}\,\, j<0,
\end{split}
\eeno
where $\cF f$ and $\widehat{f}$ always denote the partial  Fourier transform of $f$ with respect to $x$ variable, and $\chi(\tau),$ ~$\varphi(\tau)$ are smooth functions such that
 \beno
&&\textrm{supp} \varphi \subset \Bigl\{\tau \in \R:  \frac34 \leq
|\tau| \leq \frac83 \Bigr\},\quad \textrm{supp}\chi \subset \Bigl\{\tau \in \R: \ |\tau|  \leq
\frac43 \Bigr\},
 \eeno
 and for any $\tau\in \R$,
\beno
\chi(\tau)+ \sum_{j\geq 0}\varphi(2^{-j}\tau)=1.
\eeno
The Bony's paraproduct $T_fg$ is defined by
\beno
T_fg=\sum_{j\ge-1}S_{j-1}f\Delta_jg.
\eeno
Then we have the following Bony's decomposition
\ben\label{Bony}
fg=T_{f}g+R_gf,
\een
where the remainder term $R_gf$ is defined by
\beno
R_gf=\sum_{j\ge 0}\Delta_jfS_1g+\sum_{j\ge 1, j'\ge j-1}\Delta_{j'}f\Delta_{j}g.
\eeno

Next we recall the classical paraproduct estimate and paraproduct calculus in Sobolev space \cite{BCD}.  We denote by $W^{s,p}$ the usual Sobolev spaces.

\begin{lemma}\label{lem:para-p}
Let $s\in \R$. It holds that
\beno
\|T_{f}g\|_{H^s}\le C\|f\|_{L^\infty}\|g\|_{H^s}.
\eeno
If $s>0$, then we have
\beno
\|R_fg\|_{H^s}\le C\|f\|_{L^\infty}\|g\|_{H^s}.
\eeno
\end{lemma}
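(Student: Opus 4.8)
The plan is to run the classical Littlewood--Paley argument. I will use the square-function characterisation $\|h\|_{H^s}^2\sim\sum_{k\ge-1}2^{2ks}\|\Delta_kh\|_{L^2}^2$ together with two elementary ingredients: first, the dyadic truncations are uniformly bounded on $L^\infty$, i.e. $\|S_jf\|_{L^\infty}\le\|\check\chi\|_{L^1}\|f\|_{L^\infty}$ and $\|\Delta_jf\|_{L^\infty}\le C\|f\|_{L^\infty}$, since each is a convolution with an $L^1$-normalised kernel $2^j\check\chi(2^j\cdot)$ or $2^j\check\varphi(2^j\cdot)$; and second, the Fourier-support localisation of the dyadic pieces of $T_fg$ and of $R_fg$, which is exactly where the behaviour of the two bounds diverges.

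For the paraproduct, write $T_fg=\sum_{j\ge-1}S_{j-1}f\,\Delta_jg$. Because $\widehat{S_{j-1}f}$ is supported in a ball of radius $\sim2^{j-1}$ while $\widehat{\Delta_jg}$ is supported in an annulus $\{|\xi|\sim2^j\}$, the $j$-th summand has spectrum in an annulus $\{c\,2^j\le|\xi|\le C\,2^j\}$; hence there is a fixed integer $N_0$ with $\Delta_k(S_{j-1}f\,\Delta_jg)=0$ whenever $|k-j|>N_0$. Using the $L^\infty$ bound on $S_{j-1}$ and $2^{ks}\sim2^{js}$ for $|k-j|\le N_0$,
\beno
2^{ks}\|\Delta_k(T_fg)\|_{L^2}\le\sum_{|j-k|\le N_0}2^{ks}\|S_{j-1}f\|_{L^\infty}\|\Delta_jg\|_{L^2}\lesssim\|f\|_{L^\infty}\sum_{|j-k|\le N_0}2^{js}\|\Delta_jg\|_{L^2}.
\eeno
Squaring, summing over $k$, applying Cauchy--Schwarz to the finite inner sum and exchanging the order of summation yields $\|T_fg\|_{H^s}\lesssim\|f\|_{L^\infty}\|g\|_{H^s}$, and this works for every $s\in\R$.

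For the remainder, I will regroup the blocks appearing in the definition of $R_fg$ according to the higher frequency, writing $R_fg=\sum_{j\ge-1}\Delta_jg\,\widetilde S_jf$, where $\widetilde S_jf$ collects the frequencies of $f$ of size $\lesssim2^j$ and therefore satisfies $\|\widetilde S_jf\|_{L^\infty}\lesssim\|f\|_{L^\infty}$. Now the $j$-th summand has spectrum only in a \emph{ball} of radius $\sim2^j$, so $\Delta_k(\Delta_jg\,\widetilde S_jf)$ can be nonzero only when $j\ge k-N_0$. This gives
\beno
2^{ks}\|\Delta_k(R_fg)\|_{L^2}\lesssim\|f\|_{L^\infty}\sum_{j\ge k-N_0}2^{(k-j)s}\bigl(2^{js}\|\Delta_jg\|_{L^2}\bigr),
\eeno
and since $s>0$ the coefficients $2^{(k-j)s}$ with $j\ge k-N_0$ form a summable geometric sequence. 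The right-hand side is thus $\|f\|_{L^\infty}$ times a discrete convolution of the $\ell^2$ sequence $\bigl(2^{js}\|\Delta_jg\|_{L^2}\bigr)_j$ with a fixed $\ell^1$ sequence, so Young's inequality for sequences gives $\bigl\|\bigl(2^{ks}\|\Delta_k(R_fg)\|_{L^2}\bigr)_k\bigr\|_{\ell^2}\lesssim\|f\|_{L^\infty}\|g\|_{H^s}$, i.e. $\|R_fg\|_{H^s}\lesssim\|f\|_{L^\infty}\|g\|_{H^s}$.

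The computation is essentially textbook, so there is no serious obstacle; the one conceptual point worth isolating — and the reason the restriction $s>0$ is needed only for the remainder — is the dichotomy in the support localisation: the pieces of $T_fg$ sit in dyadic annuli, which yields almost-orthogonality and hence an estimate for every $s\in\R$, whereas the pieces of $R_fg$ sit only in balls, so the output at frequency $\sim2^k$ is fed by all higher-frequency pieces and must be recovered by summing a geometric series, which converges precisely when $s>0$.
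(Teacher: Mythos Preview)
Your argument is correct and is precisely the standard Littlewood--Paley proof. Note that the paper does not actually supply a proof of this lemma: it is stated as a classical fact with a reference to \cite{BCD}, so there is no ``paper's own proof'' to compare against. What you have written is essentially the proof one finds in that reference, with the expected dichotomy between the annular localisation of the paraproduct pieces (valid for all $s$) and the ball localisation of the remainder pieces (requiring $s>0$).
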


\begin{lemma}\label{lem:para-cal}
Let $s\in \R$ and $\sigma\in (0,1]$. It holds that
\beno
\|(T_a T_{b}-T_{ab})f\|_{H^s}
\le C\big(\|a\|_{W^{\sigma,\infty}}\|b\|_{L^\infty}+\|a\|_{L^\infty}\|b\|_{W^{\sigma,\infty}}\big)\|f\|_{H^{s-\sigma}}.
\eeno
Especially, we have
\beno
&&\|[T_a,T_b]f\|_{H^s}\le  C\big(\|a\|_{W^{\sigma,\infty}}\|b\|_{L^\infty}+\|a\|_{L^\infty}\|b\|_{W^{\sigma,\infty}}\big)\|f\|_{H^{s-\sigma}}.
\eeno
\end{lemma}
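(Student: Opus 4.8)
The plan is to prove the composition estimate by a dyadic Littlewood--Paley decomposition combined with an almost-orthogonality argument, and then to read off the commutator bound using that the paraproduct depends commutatively on the product, i.e. $T_{ab}=T_{ba}$. First I would write
\beno
(T_aT_b-T_{ab})f=\sum_{j\ge-1}u_j,\qquad u_j:=S_{j-1}a\,\Delta_j(T_bf)-S_{j-1}(ab)\,\Delta_jf .
\eeno
The key structural observation is that in each summand the low-frequency factor ($S_{j-1}a$, resp. $S_{j-1}(ab)$), spectrally supported in $\{|\xi|\le\f23 2^j\}$, multiplies a genuinely $\Delta_j$-localized factor ($\Delta_j(T_bf)$, resp. $\Delta_jf$), supported in $\{\f34 2^j\le|\xi|\le\f83 2^j\}$; hence each $u_j$ is spectrally supported in the fixed dyadic annulus $\{\f1{12}2^j\le|\xi|\le \f{10}{3}2^j\}$. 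By the classical almost-orthogonality lemma for series whose spectra lie in dyadic annuli,
\beno
\|(T_aT_b-T_{ab})f\|_{H^s}^2\le C\sum_{j\ge-1}2^{2js}\|u_j\|_{L^2}^2 ,
\eeno
the finitely many low-frequency blocks (where the annulus degenerates to a ball) being controlled trivially by $\|a\|_{L^\infty}\|b\|_{L^\infty}\|f\|_{L^2}$. Hence everything reduces to the block bound
\beno
\|u_j\|_{L^2}\le C\,2^{-\sigma j}\big(\|a\|_{W^{\sigma,\infty}}\|b\|_{L^\infty}+\|a\|_{L^\infty}\|b\|_{W^{\sigma,\infty}}\big)\sum_{|k-j|\le N_0}\|\Delta_kf\|_{L^2}
\eeno
for a fixed $N_0$: inserting this into the previous line, using $2^{(s-\sigma)j}\sim 2^{(s-\sigma)k}$ when $|k-j|\le N_0$, and summing with Cauchy--Schwarz reproduces $\|f\|_{H^{s-\sigma}}^2$ on the right, with the stated constant.

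To prove the block bound I would split $u_j=u_j^{(1)}+u_j^{(2)}$, where $u_j^{(1)}:=S_{j-1}a\big(\Delta_j(T_bf)-S_{j-1}b\,\Delta_jf\big)$ and $u_j^{(2)}:=\big(S_{j-1}a\,S_{j-1}b-S_{j-1}(ab)\big)\Delta_jf$ (only the $L^2$ triangle inequality is used here, so no spectral property of these pieces is needed). For $u_j^{(1)}$ one uses $\|S_{j-1}a\|_{L^\infty}\le C\|a\|_{L^\infty}$, and, after expanding $\Delta_j(T_bf)=\sum_{|k-j|\le N_0}\Delta_j(S_{k-1}b\,\Delta_kf)$, one is left with a sum of commutators $[\Delta_j,S_{k-1}b]\Delta_kf$ and of low-frequency telescoping differences $(S_{k-1}b-S_{j-1}b)\Delta_j\Delta_kf$, each of which is bounded in $L^2$ by $C2^{-\sigma j}\|b\|_{W^{\sigma,\infty}}\|\Delta_kf\|_{L^2}$; here one uses the H\"older-type kernel estimate $\int_{\R}|y|^\sigma|\psi_j(y)|\,dy\le C2^{-\sigma j}$ (with $\psi_j$ the kernel of $\Delta_j$, which has vanishing integral for $j\ge0$), together with $\|S_{k-1}b\|_{\dot C^\sigma}\le C\|b\|_{W^{\sigma,\infty}}$ and $\|\Delta_mb\|_{L^\infty}\le C2^{-\sigma m}\|b\|_{W^{\sigma,\infty}}$ for $m\ge0$. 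For $u_j^{(2)}$ it suffices to bound $\|S_{j-1}a\,S_{j-1}b-S_{j-1}(ab)\|_{L^\infty}$; writing
\beno
S_{j-1}(ab)-S_{j-1}(a)\,S_{j-1}(b)=S_{j-1}\big((a-S_{j-1}a)\,b\big)+\big[S_{j-1},\,S_{j-1}a\big]b
\eeno
and using $\|a-S_{j-1}a\|_{L^\infty}\le C2^{-\sigma j}\|a\|_{W^{\sigma,\infty}}$ together with the commutator bound $\|[S_{j-1},g]b\|_{L^\infty}\le C2^{-\sigma j}\|g\|_{\dot C^\sigma}\|b\|_{L^\infty}$ (with $g=S_{j-1}a$ and $\|S_{j-1}a\|_{\dot C^\sigma}\le C\|a\|_{W^{\sigma,\infty}}$) produces the term $\|a\|_{W^{\sigma,\infty}}\|b\|_{L^\infty}$; the same computation with the roles of $a$ and $b$ interchanged produces $\|a\|_{L^\infty}\|b\|_{W^{\sigma,\infty}}$, so $u_j^{(2)}$ is controlled by the sum of the two. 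This establishes the block bound and hence the first inequality.

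Finally, since $T_{ab}=T_{ba}$ is immediate from the definition of the paraproduct, we have $[T_a,T_b]f=(T_aT_b-T_{ab})f-(T_bT_a-T_{ba})f$, and applying the first inequality to each of the two terms --- the bound being symmetric in $a$ and $b$ --- yields the commutator estimate. The main obstacle is the block bound, and specifically its commutator pieces: since only $\sigma\le1$ derivatives are available on $a$ and $b$, the naive mean-value commutator estimate cannot be used and one must rely on the H\"older-modulus kernel bound $\int|y|^\sigma|\psi_j|\,dy\le C2^{-\sigma j}$; moreover the decompositions of $u_j^{(2)}$ and of the telescoping differences $S_{k-1}b-S_{j-1}b$ must be arranged so that both mixed products $\|a\|_{W^{\sigma,\infty}}\|b\|_{L^\infty}$ and $\|a\|_{L^\infty}\|b\|_{W^{\sigma,\infty}}$ appear, rather than a single asymmetric term.
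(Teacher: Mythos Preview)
Your argument is correct. The paper does not actually prove this lemma: it is stated as a recalled result from the paradifferential calculus in \cite{BCD} and used as a black box, so there is no ``paper's own proof'' to compare against. What you wrote is essentially the standard Littlewood--Paley proof one finds in that reference (symbolic-calculus estimate for paraproducts): localize $T_aT_b-T_{ab}$ block by block, observe each block is spectrally supported in a fixed dyadic annulus, and reduce to the commutator and telescoping bounds you describe. One cosmetic remark: your sentence that the low-frequency blocks are ``controlled trivially by $\|a\|_{L^\infty}\|b\|_{L^\infty}\|f\|_{L^2}$'' should rather end with $\sum_{|k|\le N_0}\|\Delta_k f\|_{L^2}\le C\|f\|_{H^{s-\sigma}}$, since for $s<\sigma$ the full $L^2$ norm is not dominated by $\|f\|_{H^{s-\sigma}}$; the fix is immediate because only finitely many low-frequency $\Delta_k f$ appear.
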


Let us conclude this section by the following basic estimates
for the heat equation. See \cite{CWZ} for example.

\begin{lemma}\label{lem:shear}
Let $u^s(t,y)$ be the solution of \eqref{eq:shear}. Then it holds that for $k=0,1,2$ and any $T<+\infty$,
\beno
|\pa^k(u^s(t,y)-1)|\le C_se^{-y}\quad\text{for}\ (t,y)\in [0,T]\times \R_+,
\eeno
where the constant $C_s$ depends on $T$ and $u_0^s$.
\end{lemma}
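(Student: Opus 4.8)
Set $h:=u^s-\phi$ with the fixed profile $\phi(y)=1-e^{-y}$. By \eqref{eq:shear} and \eqref{ass:chi}, $h$ solves
\[
\pa_t h-\pa_y^2 h=-e^{-y},\qquad h|_{y=0}=0,\qquad h|_{t=0}=h_0:=u_0^s-\phi,\qquad h\to0\ \ (y\to\infty),
\]
with $h_0(0)=u_0^s(0)=0$ and $|\pa_y^k h_0(y)|\le C'e^{-y}$ for $k=0,1,2$ (with $C'$ depending only on $C$); moreover $h$ is bounded, since $|u_0^s-1|\le Ce^{-y}$ forces $u^s$ to be bounded. Because $\pa_y^k(u^s-1)=\pa_y^k h+\pa_y^k(\phi-1)$ and $|\pa_y^k(\phi-1)|=e^{-y}$, it suffices to prove $|\pa_y^k h(t,y)|\le C_T e^{-y}$ on $[0,T]\times\R_+$ for $k=0,1,2$.

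For this I would use the Duhamel representation of the bounded solution $h$ with the Dirichlet heat kernel on $\R_+$,
\[
G(t,y,z)=\f{1}{\sqrt{4\pi t}}\big(e^{-(y-z)^2/4t}-e^{-(y+z)^2/4t}\big)\ge0,
\qquad
h(t,y)=\int_0^\infty G(t,y,z)h_0(z)\,dz-\int_0^t\!\!\int_0^\infty G(t-s,y,z)e^{-z}\,dz\,ds.
\]
For $t>0$ this may be differentiated in $y$ and integrated by parts in $z$; using the elementary identities $\pa_y G=-\pa_z\wt G$, where $\wt G(t,y,z)=\f{1}{\sqrt{4\pi t}}\big(e^{-(y-z)^2/4t}+e^{-(y+z)^2/4t}\big)$, and $\pa_y^2 G=\pa_z^2 G$, one moves the $y$-derivatives in $\pa_y^k h$ onto $h_0$ (resp.\ onto $e^{-z}$). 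The boundary terms at $z=\infty$ vanish by Gaussian decay, and those at $z=0$ either vanish (because $h_0(0)=0$ and $G(t,y,0)=0$) or reduce to the explicit kernels $\wt G(\tau,y,0)=\f{2}{\sqrt{4\pi\tau}}e^{-y^2/4\tau}$ and $\pa_z G(\tau,y,0)=\f{y}{\sqrt{4\pi}\,\tau^{3/2}}e^{-y^2/4\tau}$, integrated in $\tau=t-s$ over $[0,t]$.

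Everything then reduces to two families of elementary estimates, each carrying the weight $e^{-y}$. The first is $\int_0^\infty G(t,y,z)e^{-z}\,dz\le e^{t-y}$ and $\int_0^\infty\wt G(t,y,z)e^{-z}\,dz\le C\,e^{t-y}$: extend the $(y-z)$-Gaussian to all of $\R$ (legitimate since the integrand is positive), giving $\int_{\R}\f{1}{\sqrt{4\pi t}}e^{-(y-z)^2/4t}e^{-z}\,dz=e^{t-y}$, and bound the reflected part by $\int_y^\infty\f{1}{\sqrt{4\pi t}}e^{-w^2/4t}\,dw\le\f12 e^{-y^2/4t}\le\f12 e^{t-y}$, using the pointwise bound $e^{-y^2/4t}\le e^{t-y}$ (equivalent to $(y-2t)^2\ge0$). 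The second is $\int_0^t\wt G(t-s,y,0)\,ds\le e^{t-y}$ and $\int_0^t\pa_z G(t-s,y,0)\,ds\le e^{t-y}$, which follow from the Laplace-transform identities
\[
\int_0^\infty\f{1}{\sqrt{4\pi\tau}}\,e^{-y^2/4\tau-\tau}\,d\tau=\f12 e^{-y},\qquad
\int_0^\infty\f{y}{\sqrt{4\pi}\,\tau^{3/2}}\,e^{-y^2/4\tau-\tau}\,d\tau=e^{-y},
\]
together with $\int_0^t f(\tau)\,d\tau\le e^{t}\int_0^\infty f(\tau)e^{-\tau}\,d\tau$ for $f\ge0$. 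Collecting these (and $\int_0^t e^{t-s}\,ds\le e^t$) yields $|\pa_y^k h(t,y)|\le C(1+C')e^{t}e^{-y}$ on $[0,T]\times\R_+$ for $k=0,1,2$, hence the lemma with $C_s$ depending only on $T$ and $C$, i.e.\ on $T$ and $u_0^s$.

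The delicate point is the bookkeeping of the boundary terms at $z=0$: this is precisely where the hypothesis $u_0^s(0)=0$ (so that $h_0(0)=0$) and the reflection structure of $G$ (Dirichlet, $G(t,y,0)=0$, and $\pa_y G=-\pa_z\wt G$) are used, so that the surviving contributions are the two explicit ``Poisson-type'' kernels above, whose finite-time $\tau$-integrals still decay like $e^{-y}$. An alternative is a maximum-principle proof: $h$ solves a homogeneous Dirichlet problem and is dominated by the barrier $A(t)e^{-y}$ with $A'-A=1$; $\pa_y^2 u^s$ solves the homogeneous heat equation with the homogeneous Dirichlet condition $\pa_y^2 u^s|_{y=0}=\pa_t u^s|_{y=0}=0$ (for $t>0$) and is dominated by $C e^{t}e^{-y}$; while $\pa_y u^s$ solves the homogeneous heat equation with the Neumann condition $\pa_y(\pa_y u^s)|_{y=0}=0$, for which one needs a barrier of vanishing slope at $y=0$, such as $A(t)(2e^{-y}-e^{-2y})$, together with Hopf's lemma and an exhaustion of $\R_+$ to justify the maximum principle on an unbounded strip. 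I would present the kernel argument, as it bypasses both the Neumann condition and the absence of corner compatibility (one does not assume $\pa_y^2 u_0^s(0)=0$).
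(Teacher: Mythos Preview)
Your argument is correct. The paper does not actually prove this lemma: it is stated as a ``basic estimate for the heat equation'' and the reader is referred to \cite{CWZ}. So there is no in-paper proof to compare against; what you have written is a complete, self-contained substitute.

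Your heat-kernel/Duhamel route is sound. The decomposition $u^s-1=(u^s-\phi)+(\phi-1)$ with $\phi=1-e^{-y}$ is the natural reduction, and the two ingredients you isolate --- the Gaussian-against-exponential bound $\int_0^\infty G(t,y,z)e^{-z}\,dz\le e^{t-y}$ (and its $\wt G$ analogue), and the Laplace-transform identities for the boundary kernels $\wt G(\tau,y,0)$ and $\pa_z G(\tau,y,0)$ --- are exactly what is needed. Your bookkeeping of the integration-by-parts boundary terms is accurate: for $k=1$ the surviving term is $\wt G(t-s,y,0)$ from the forcing (the $h_0$-term drops because $h_0(0)=0$), and for $k=2$ the surviving term is $\pa_zG(t-s,y,0)$ from the forcing (both $h_0$-boundary terms vanish, one by $h_0(0)=0$ and one by $G(t,y,0)=0$). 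The inequality $\int_0^t f(\tau)\,d\tau\le e^{t}\int_0^\infty f(\tau)e^{-\tau}\,d\tau$ is a clean way to pass from the finite-time integrals to the exact Laplace transforms.

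A small presentational remark: your justification ``$h$ is bounded, since $|u_0^s-1|\le Ce^{-y}$ forces $u^s$ to be bounded'' is really the maximum principle (bounded data and zero Dirichlet value imply bounded solution), which you might state explicitly; this is what guarantees that your Duhamel representation indeed equals $h$. Also, the near-endpoint ($s\to t$) differentiability of the Duhamel integral is handled precisely because you move derivatives onto $e^{-z}$ before estimating, so no extra care is needed there. Your alternative maximum-principle sketch is also viable and closer in spirit to typical references, but, as you note, the Neumann case for $\pa_y u^s$ requires a barrier with zero slope at $y=0$, which your kernel argument sidesteps.
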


\section{Uniform  tame energy estimates}

For this part, the proof is similar to \cite{CWZ}. However, we need to refine
the energy estimates there in order to prove the blow-up criterion.

Let us introduce the energy functional
\beno
E(t)=\|\tu(t)\|_{H^{1,2}_\mu}^2+\|\tu(t)\|_{H^{3,1}_{\mu}}^2+\|\tu\|_{H^{2,0}_\om}^2,
\eeno
and
\begin{align*}
A(t)=\sup\limits_{(s,x,y)\in [0,t]\times\R^2_+}e^y\Big(\sum_{k=0}^2|\pa_{y}^k{\widetilde{u}}(s,x,y)|+(1+y)^{-1}\big(|\pa_x u(s,x,y)|+|\pa_x\pa_y u(s,x,y)|\big)\Big).\end{align*}
Here the definition of $A(t)$ is slightly different from that in \cite{CWZ} with
an extra factor $(1+y)^{-1}$ in the front of $|\pa_xu|$ and $|\pa_{xy}u|$.

In the sequel,  we assume that
\beno
\pa_y u(t,x,y)\ge c_1e^{-y}\quad \textrm{for}\quad (t,x,y)\in [0,T]\times \R^2_+.
\eeno
We denote by $C$ a constant depending only on $c_1, C_s, T$.

\subsection{Sobolev estimate in horizontal direction}

We denote
\beno
a=1/{\pa_y u},\quad b=\pa_y u.
\eeno
We first paralinearize the first equation of (\ref{eq:Prandtl-3}).  Using Bony's decomposition (\ref{Bony}), we obtain
\begin{align*}
\pa_t \tu+T_u\pa_x \tu+T_{\pa_y u}v-\pa_{y}^2\tu=f,
\end{align*}
where
\beno
f= -R_{\pa_x\tu}\tu-R_{v}\pa_y u.
\eeno
Notice that there is no derivative loss for the terms in $f$.
To eliminate the trouble term $T_{\pa_y u}v$,  it is natural to introduce a good unknown $w$ defined by
\beno
w=\pa_yT_a\tu.
\eeno
A direct calculation shows
\begin{equation}\label{eq:Prandtl-w}
  \left\{
  \begin{aligned}
    &\pa_t w+T_u \pa_x w -\pa_{y}^2 w=\pa_y F_1+F_2,\\
    & \pa_yw|_{y=0}=0\quad \mbox{and}\quad\displaystyle\lim_{y\to\infty} w=0,\\
     &w|_{t=0}= w_0(x,y),
  \end{aligned}
  \right.
\end{equation}
where $F_1$ and $F_2$ are given by
\beno
&&F_1=T_{\pa_ta}\tu-[T_a,T_u]\pa_x\tu+(T_{ab}-T_aT_b)v-[\pa_{y}^2,T_a]\tu
+T_af+T_uT_{\pa_x a}\tu,\\
&&F_2=(T_{ba}-T_bT_a)\pa_x \tu+T_{\pa_y u}T_{\pa_x a}\tu.
\eeno


\begin{lemma}\label{lem:F-S}
It holds that
\beno
\|F_1\|_{H^{3,0}_{\nu}}+\|F_2\|_{H^{3,0}_{\nu}}
\le C\big(1+A(t)^2\big)E(t)^\f12.
\eeno
\end{lemma}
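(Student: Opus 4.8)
The strategy is to estimate each of the several terms making up $F_1$ and $F_2$ separately, bounding every one of them by $C(1+A(t)^2)E(t)^{1/2}$ in the weighted space $H^{3,0}_\nu$. The unifying principle is that all terms are bilinear in a ``rough but low-order'' factor (built from $\widetilde u$, $v$, $\partial_y u$, $a=1/\partial_y u$, and their $x$- or $t$-derivatives, each of which is controlled pointwise by $A(t)$ with the correct exponential weight) and a ``smooth, high-order'' factor whose $H^{3,0}_\mu$ norm is part of $E(t)^{1/2}$. The weights are arranged so that $\mu(y)\nu(y)=1$: a factor carrying $e^{-y}$ decay pairs with a factor measured in $\mu=e^{y/2}$ weight to produce something in $\nu=e^{-y/2}$ weight.

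First I would record the pointwise consequences of the monotonicity hypothesis and the definition of $A(t)$: since $\partial_y u\ge c_1 e^{-y}$ and $|\partial_y^k(u-1)|\le e^{-y}A(t)$ for $k=0,1,2$, one gets $\|a\|_{L^\infty}\le C$, $\|a\|_{W^{1,\infty}_x}$, $\|\partial_y a\|_{L^\infty}$, $\|\partial_t a\|_{L^\infty}\lesssim 1+A(t)$ (using that $\partial_t\partial_y u$ can be expressed via the equation in terms of $\partial_y^2\widetilde u$, $\partial_y^3 u^s$ and lower-order products, which brings in at most one more power of $A(t)$), and similarly $\partial_x a$, $\partial_x\partial_y a$ are $O(1+A(t)^2)$ times $e^{-y}$-type bounds. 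Likewise $v(t,x,y)=-\int_0^y\partial_x u\,dy'$ so that $|v|\lesssim (1+y)A(t)$, which is absorbable once paired against a $\mu$-weighted factor. Then I would run through the list:
\begin{itemize}
\item For $T_{\partial_t a}\widetilde u$, $[\pa_y^2,T_a]\widetilde u$, $T_af$, $T_uT_{\partial_x a}\widetilde u$, $T_{\partial_y u}T_{\partial_x a}\widetilde u$: apply Lemma \ref{lem:para-p} to pull out the $L^\infty$ norm of the rough coefficient, keeping $\|\widetilde u\|$ (or $\|\partial_x\widetilde u\|$) in a $\mu$-weighted $H^{\le 3}$ norm; the commutator $[\pa_y^2,T_a]=2T_{\partial_y a}\pa_y+T_{\partial_y^2 a}$ costs one vertical derivative on $\widetilde u$, which is fine since $\|\widetilde u\|_{H^{1,2}_\mu}$ sits inside $E(t)^{1/2}$.
\item For the paraproduct-calculus differences $(T_{ab}-T_aT_b)v$, $(T_{ba}-T_bT_a)\pa_x\widetilde u$, and the commutator $[T_a,T_u]\pa_x\widetilde u$: apply Lemma \ref{lem:para-cal} with $\sigma=1$; this gains one horizontal derivative, which exactly compensates the $\pa_x$ in front of $\widetilde u$ (and for the $v$ term, compensates the growth, since $\|v\|_{H^{-1,\cdot}}$ is controlled by $\|\widetilde u\|$). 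The coefficient norms $\|a\|_{W^{1,\infty}}$, $\|b\|_{W^{1,\infty}}$, $\|u\|_{W^{1,\infty}}$ contribute the factors of $A(t)$.
\item For $f=-R_{\pa_x\widetilde u}\widetilde u-R_v\pa_y u$: use the second bound in Lemma \ref{lem:para-p} (remainder estimate, valid for $s>0$, so work at $s=3$ horizontally while keeping $L^2$ vertically after extracting $L^\infty_x$ of the rough factor), noting $\|\pa_x\widetilde u\|_{L^\infty}$, $\|v\|_{L^\infty_{\text{loc}}}$ carry one power of $A(t)$ each.
\end{itemize}
Summing these gives the claimed bound, with the worst terms (the ones forcing the $A(t)^2$) being those in which the rough factor is itself a product such as $\partial_x a = -a^2\partial_x\partial_y u$ or $\partial_t a=-a^2\partial_t\partial_y u$.

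The main obstacle is bookkeeping of weights rather than any single hard estimate: one must check that in every term the exponential weights multiply correctly to land in $\nu=e^{-y/2}$, paying particular attention to the vertical commutator $[\pa_y^2,T_a]$ (where $\pa_y$ can hit either the weight-producing factor or not) and to the terms involving $v$, which grows linearly in $y$ and therefore must always be paired against a factor with strictly faster-than-linear decay or, better, integrated by parts / handled at negative horizontal regularity so that $\|v\|$ is traded for $\|\widetilde u\|$ via $\pa_x\widetilde u+\pa_y v=0$. A secondary subtlety is justifying $\|\partial_t a\|_{L^\infty}\lesssim 1+A(t)$: this requires using equation \eqref{eq:Prandtl-3} (or \eqref{eq:shear}) to trade the time derivative for spatial ones before estimating, and this is the step that, together with the analogous treatment of $\partial_x a$, is responsible for the quadratic dependence $A(t)^2$ in the final bound. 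Once these weight-matchings and the $\partial_t a$ identity are in hand, each term is a direct application of Lemmas \ref{lem:para-p} and \ref{lem:para-cal}, and the proof is routine.
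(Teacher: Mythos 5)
There is a genuine gap. Your plan treats $T_{\partial_t a}\widetilde u$, $[\partial_y^2,T_a]\widetilde u$, and $T_uT_{\partial_x a}\widetilde u$ as separately estimable via Lemma~\ref{lem:para-p} after bounding the coefficient norms, claiming in particular $\|\partial_t a\|_{L^\infty}\lesssim 1+A(t)$ because ``$\partial_t\partial_y u$ can be expressed via the equation in terms of $\partial_y^2\widetilde u$.'' This is false: differentiating \eqref{eq:Prandtl-3} in $y$ gives $\partial_t\partial_y u=\partial_y^3 u -\partial_y(u\partial_x\widetilde u+v\partial_y u)$, so $\partial_t a=-a^2\partial_t\partial_y u$ contains $\partial_y^3 u$, which is \emph{not} controlled pointwise by $A(t)$ (which stops at $\partial_y^2$) nor in any norm recoverable from $E(t)$. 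The same problem occurs for $[\partial_y^2,T_a]\widetilde u=T_{\partial_y^2 a}\widetilde u+2T_{\partial_y a}\partial_y\widetilde u$, since $\partial_y^2 a$ also contains $\partial_y^3 u$. Neither piece can be bounded on its own. The paper's proof crucially exploits a cancellation: it groups $F_{11}=T_{\partial_t a}\widetilde u-[\partial_y^2,T_a]\widetilde u+T_uT_{\partial_x a}\widetilde u=T_{\partial_t a-\partial_y^2 a}\widetilde u-2T_{\partial_y a}\partial_y\widetilde u+T_uT_{\partial_x a}\widetilde u$, computes from the equation that
\[
\partial_t a-\partial_y^2 a=\frac{u\,\partial_x\partial_y\widetilde u}{(\partial_y u)^2}+\frac{v\,\partial_y^2 u}{(\partial_y u)^2}-\frac{2(\partial_y^2 u)^2}{(\partial_y u)^3}
\]
(so the $\partial_y^3 u$ drops out), and then pairs the resulting $T_{u\partial_x\partial_y\widetilde u/(\partial_y u)^2}\widetilde u$ against $T_uT_{\partial_x a}\widetilde u$ to produce a paraproduct-calculus difference $T_{\widetilde u\,\partial_x a}\widetilde u-T_{\widetilde u}T_{\partial_x a}\widetilde u$ which Lemma~\ref{lem:para-cal} controls (the $u^s$ parts cancel identically since $u^s$ is $x$-independent). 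This regrouping is the heart of the lemma and your plan does not contain it.

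A secondary, but also real, problem is the claim ``$\|a\|_{L^\infty}\le C$.'' Since $\partial_y u\sim e^{-y}$, one has $a=1/\partial_y u\sim e^y$, which is unbounded; only $\|e^{-y}a\|_{L^\infty}\le C$ holds. The same applies to $\widetilde a$ and $\partial_x a$, which grow in $y$. The paper manages this by invoking $[T_a,T_u]=[T_{\widetilde a},T_{\widetilde u}]$ and $T_{ab}-T_aT_b=T_{\widetilde a\widetilde b}-T_{\widetilde a}T_{\widetilde b}$ (replacing $a,b,u$ by their $x$-dependent parts, which is an exact identity because $x$-independent paraproduct symbols act as multiplication) and then distributing the exponential weight $e^{-y/2}=e^{-y}\cdot e^{y/2}$ between the two factors, together with the extra $(1+y)^{-1}$ and $(1+y)$ factors in the definition of $A(t)$. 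Your proposal gestures at ``weight bookkeeping'' but gets the pointwise inventory wrong at the outset, and the identity $[T_a,T_u]=[T_{\widetilde a},T_{\widetilde u}]$ that makes the bookkeeping workable does not appear in your plan.
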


\begin{proof}
Let $\ta=a-(\pa_y u^s)^{-1}$ and $\tb=\pa_y u-\pa_yu^s$.
Then we have
\beno
[T_a,T_u]=[T_{\ta},T_{\tu}],\quad T_{ab}-T_aT_b=T_{\ta\tb}-T_{\ta}T_{\tb}.
\eeno
It follows from Lemma \ref{lem:para-cal} and Lemma \ref{lem:shear}  that
\begin{align*}
\|[T_a,T_u]\pa_x\tu\|_{H^{3,0}_{\nu}}&=\|e^{-\f{y}{2}}[T_a,T_u]\pa_x\tu\|_{H^{3,0}}=\big\|[T_{e^{-y}\tilde{a}},T_{\tilde{u}}]e^\f{y}{2}\pa_x\tu\big\|_{H^{3,0}}\\
&\le C\|(1+y)^{-1}e^{-y}\ta\|_{L^\infty_y(W^{1,\infty}_x)}\|(1+y)\tu\|_{L^\infty_y(W^{1,\infty}_x)}\|\tu\|_{H^{3,0}_\mu}\\
&\le C\big(1+A(t)^2\big)\|\tu\|_{H^{3,0}_\mu}.
\end{align*}
Similarly, we have
\begin{align*}
\|(T_{ab}-T_aT_b)v\|_{H^{3,0}_{\nu}}
\le& C\|(1+y)^{-1}e^{-y}\ta\|_{L^\infty_y(W^{1,\infty}_x)}\|(1+y)e^{\f{y}{2}}\tb\|_{L^2_y(W^{1,\infty}_x)}\|v\|_{H^{2,0}}\\
\le& C\big(1+A(t)^2\big)\|\tu\|_{H^{3,0}_\mu}.
\end{align*}
Here we used $v=-\int_0^y\pa_x \tu dy'$. By Lemma \ref{lem:para-p}, we have
\begin{align*}
\|T_af\|_{H^{3,0}_{\nu}}\le& C\|e^{- y}a\|_{L^\infty}\|f\|_{H^{3,0}_\mu}\\
\le& C\big(\|\pa_x\tu\|_{L^\infty}\|\tu\|_{H^{3,0}_\mu}+\|v\|_{L^\infty}\|\tu\|_{H^{3,1}_\mu}+\|\tu\|_{H^{2,0}_\mu}\big)\\
\le& C\big(1+A(t)\big)\|\tu\|_{H^{3,0}_\mu}.
\end{align*}

We denote
\begin{align*}
F_{11}=&T_{\pa_ta}\tu-[\pa_{y}^2,T_a]\tu+T_uT_{\pa_x a}\tu\\
=&T_{\pa_t a-\pa_y^2a}\tu-2T_{\pa_y a}\pa_y \tu-T_uT_{\f {\pa_x\pa_y\tu} {(\pa_yu)^2}}.
\end{align*}
Using the first equation of (\ref{eq:Prandtl-3}), we find that
\begin{align*}
\pa_t a-\pa_y^2a=&-\frac {(\pa_t-\pa_y^2)\pa_yu} {(\pa_y u)^2}-\frac {2(\pa_y^2 u)^2} {(\pa_y u)^3}\\
=&\frac {u\pa_{x}\pa_y\tu} {(\pa_y u)^2}+\f{v\pa_y^2u} {(\pa_y u)^2}-\frac {2(\pa_y^2 u)^2} {(\pa_y u)^3}.
\end{align*}
Then we have
\beno
F_{11}=T_{\frac {\tu\pa_{x}\pa_y\tu} {(\pa_y u)^2}}\tu-T_{\tu}T_{\frac {\pa_{x}\pa_y\tu} {(\pa_y u)^2}}\tu+T_{\f{v\pa_y^2u} {(\pa_y u)^2}}\tu-T_{\frac {2(\pa_y^2 u)^2} {(\pa_y u)^3}}\tu-2T_{\pa_y a}\pa_y \tu,
\eeno
from which and Lemma \ref{lem:para-p}, we infer that
\beno
\|F_{11}\|_{H^{3,0}_{\nu}}\le C\big(1+A(t)^2\big)E(t)^\f12.
\eeno

Summing up, we conclude that
\beno
\|F_1\|_{H^{3,0}_{\nu}}\le C\big(1+A(t)^2\big)E(t)^\f12.
\eeno
Similarly, we have
\beno
\|F_2\|_{H^{3,0}_{\nu}}\le C\big(1+A(t)^2\big)\|\tu\|_{H^{3,0}_\mu}.
\eeno
This proves our result.
\end{proof}

\begin{proposition}\label{prop:energy-w-s}
Let $w$ be a smooth solution of (\ref{eq:Prandtl-w}) in $[0,T]$. Then it holds that for any $t\in [0,T]$,
\begin{align*}
\frac d{dt}\|w\|_{H^{3,0}_{\nu}}^2+\|\pa_yw\|_{H^{3,0}_{\nu}}^2\le& C\big(1+A(t)^4\big)E(t)+C\|w\|_{H^{3,0}_\nu}^2.
\end{align*}
\end{proposition}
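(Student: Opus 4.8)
The plan is to run a weighted $L^2$ energy estimate on the horizontal derivatives of $w$ of order $\le 3$. For $0\le\al\le 3$, apply $\pa_x^\al$ to the first equation of \eqref{eq:Prandtl-w}, take the $L^2(\R^2_+)$ inner product with $\nu^2\pa_x^\al w$ (recall $\nu^2(y)=e^{-y}$), and sum over $\al$. The time derivative gives $\f12\f{d}{dt}\|w\|_{H^{3,0}_\nu}^2$. Before starting, note the elementary fact $\pa_y^2 u|_{y=0}=0$: since $u^s$ and $\tu$ both vanish at $y=0$ for every $t$, the heat equation \eqref{eq:shear} and the first equation of \eqref{eq:Prandtl-3} give $\pa_y^2 u^s|_{y=0}=\pa_y^2\tu|_{y=0}=0$; hence also $(\pa_y a)|_{y=0}=0$, which is exactly what makes $\pa_y w|_{y=0}=0$ as in \eqref{eq:Prandtl-w}.

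For the diffusion term, integrate by parts in $y$; the boundary term at $y=0$ carries $\pa_y\pa_x^\al w|_{y=0}=\pa_x^\al(\pa_y w|_{y=0})=0$ and the one at $y=+\infty$ vanishes by decay. Using $(\nu^2)'=-\nu^2$, we are left with $\|\pa_y w\|_{H^{3,0}_\nu}^2$, a nonnegative boundary trace $\f12\sum_{\al\le3}\|\pa_x^\al w(\cdot,0)\|_{L^2_x}^2$ which we simply discard, and $-\f12\|w\|_{H^{3,0}_\nu}^2$, which we move to the right-hand side. For the forcing $\pa_y F_1+F_2$: integrate the $\pa_y F_1$ piece by parts in $y$; the boundary term at $y=0$ vanishes because $F_1|_{y=0}=0$ — using $(\pa_y a)|_{y=0}=0$ together with $\tu|_{y=0}=v|_{y=0}=u|_{y=0}=\pa_y^2\tu|_{y=0}=0$, each of the six summands of $F_1$ visibly vanishes at $y=0$ (for the fourth one recall $[\pa_{y}^2,T_a]\tu=T_{\pa_y^2 a}\tu+2T_{\pa_y a}\pa_y\tu$). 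What remains is bounded, again using $(\nu^2)'=-\nu^2$, by $\|F_1\|_{H^{3,0}_\nu}\big(\|\pa_y w\|_{H^{3,0}_\nu}+\|w\|_{H^{3,0}_\nu}\big)$, and the $F_2$ term by $\|F_2\|_{H^{3,0}_\nu}\|w\|_{H^{3,0}_\nu}$; the factor $\|\pa_y w\|_{H^{3,0}_\nu}$ is absorbed into a small multiple of the dissipation by Young's inequality, and Lemma \ref{lem:F-S} gives $\|F_1\|_{H^{3,0}_\nu}^2+\|F_2\|_{H^{3,0}_\nu}^2\le C(1+A(t)^2)^2E(t)\le C(1+A(t)^4)E(t)$.

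The transport term is the heart of the matter. Write $\pa_x^\al(T_u\pa_x w)=T_u\pa_x^{\al+1}w+[\pa_x^\al,T_u]\pa_x w$. In the contribution of $T_u\pa_x^{\al+1}w=T_u\pa_x(\pa_x^\al w)$, put $f=\pa_x^\al w$ and exploit the near-skew-symmetry of $T_u\pa_x$ in $L^2_x$: its $L^2_x$-adjoint is $-\pa_x T_u^*$, so $T_u\pa_x+(T_u\pa_x)^*=-T_{\pa_x u}-\pa_x(T_u^*-T_u)$. Since $\nu=\nu(y)$ commutes with every $x$-operation, $\langle T_u\pa_x f,f\rangle_{L^2_\nu}=\f12\big\langle\big(-T_{\pa_x u}-\pa_x(T_u^*-T_u)\big)f,f\big\rangle_{L^2_\nu}$ is bounded by $C\|\pa_x u\|_{L^\infty}\|f\|_{L^2_\nu}^2$, using $\|T_{\pa_x u}g\|_{L^2}\le C\|\pa_x u\|_{L^\infty}\|g\|_{L^2}$ and the standard paraproduct self-adjointness estimate $\|(T_u^*-T_u)g\|_{H^{1,0}}\le C\|\pa_x u\|_{L^\infty}\|g\|_{L^2}$; summing over $\al$ gives $C\|\pa_x u\|_{L^\infty}\|w\|_{H^{3,0}_\nu}^2$. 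For the commutator $[\pa_x^\al,T_u]\pa_x w=\sum_{1\le k\le\al}\binom{\al}{k}T_{\pa_x^k u}\pa_x^{\al-k+1}w$, Lemmas \ref{lem:para-p} and \ref{lem:para-cal} give for $k=1$ again $C\|\pa_x u\|_{L^\infty}\|w\|_{H^{3,0}_\nu}^2$, while the terms with $2\le k\le3$ (for which $\pa_x^k u=\pa_x^k\tu$, as $\pa_x u^s=0$) place at most three $x$-derivatives, or the single available $y$-derivative, on any one factor, hence are controlled by the pointwise data in $A(t)$, $E(t)^\f12$, and the $2$-D anisotropic Sobolev embedding. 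Finally $\|\pa_x u\|_{L^\infty}\le CA(t)$ (from the definition of $A(t)$, as $(1+y)e^{-y}$ is bounded) and $\|w\|_{H^{3,0}_\nu}^2\le C(1+A(t)^2)E(t)$ (from $w=\pa_y T_a\tu$, $a\le Ce^y$, $|\pa_y\tu|\le A(t)e^{-y}$), so all of this is absorbed into $C(1+A(t)^4)E(t)+C\|w\|_{H^{3,0}_\nu}^2$.

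Collecting the four contributions, absorbing the $\e\|\pa_y w\|_{H^{3,0}_\nu}^2$ on the left-hand side and multiplying by $2$ gives the claim. The main obstacle is the transport term: getting only a factor $A(t)$ — and not a full derivative — out of $\int\nu^2(T_u\pa_x^{\al+1}w)\pa_x^\al w$ requires the near-skew-symmetry of $T_u\pa_x$ in the weighted space together with the paraproduct adjoint and commutator calculus, with careful bookkeeping of how many derivatives fall on each factor. A secondary but essential point is that all the boundary contributions at $y=0$ (for the dissipation and for $\pa_y F_1$) vanish, which rests on $\pa_y^2 u|_{y=0}=0$.
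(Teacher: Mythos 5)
Your proof follows essentially the same route as the paper's: you perform the $H^{3,0}_\nu$ energy estimate on \eqref{eq:Prandtl-w}, control the diffusion term by integrating by parts in $y$ using $\pa_y w|_{y=0}=0$, treat $\pa_y F_1$ by integrating by parts and using $F_1|_{y=0}=0$ together with Lemma~\ref{lem:F-S}, and handle the transport term $(T_u\pa_x w,w)_{H^{3,0}_\nu}$ by the antisymmetrization of $T_u\pa_x$ (the $T_{\pa_x\tu}$ and $T_u-T_u^*$ pieces) plus the $[\pa_x^k,T_u]$ commutator, exactly as in the paper's $D$-decomposition. Your presentation is somewhat more explicit in places (why $\pa_y^2 u|_{y=0}=0$, the sign and discarding of the boundary trace, and the absorption of $CA(t)\|w\|_{H^{3,0}_\nu}^2$ into the stated right-hand side via $\|w\|_{H^{3,0}_\nu}^2\lesssim (1+A(t)^2)E(t)$), but these are details the paper leaves implicit rather than a different argument.
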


\begin{proof}
Making $H^{3,0}_{\nu}$ energy estimate to (\ref{eq:Prandtl-w}), we obtain
\begin{align*}
&\frac12\frac d{dt}\|w\|_{H^{3,0}_{\nu}}^2-(\pa_{y}^2 w,w)_{H^{3,0}_{\nu}}+\big(T_u \pa_x w,w\big)_{H^{3,0}_{\nu}}=\big(\pa_yF_1, w\big)_{H^{3,0}_{\nu}}+\big(F_2, w\big)_{H^{3,0}_{\nu}}.
\end{align*}

Thanks to $\pa_y w|_{y=0}=0$, we get by integration by parts that
\begin{align*}
-\big(\pa_{y}^2 w, w\big)_{H^{3,0}_{\nu}}=\|\pa_yw\|_{H^{3,0}_{\nu}}^2-(\pa_y w,w)_{H^{3,0}_{\nu}}\geq \f12\|\pa_yw\|_{H^{3,0}_{\nu}}^2-C\|w\|_{H^{3,0}_{\nu}}^2.
\end{align*}
Notice that
\begin{align*}
\big(T_u \pa_x w, w\big)_{H^{3,0}_{\nu}}=D+\sum_{k=1}^{3}\big([\pa_x^k, T_{\tu}] \pa_x w, \pa_x^k w\big)_{L^2_{\nu}},
\end{align*}
where
\begin{align*}
D=&-\f12\sum_{k=0}^{3}\big( T_{\pa_x\tu} \pa_x^k w, \pa_x^k w\big)_{L^2_{\nu}}+\f 12\sum_{k=0}^{3}\big( (T_u-T_u^*) \pa_x^k\pa_x w, \pa_x^k w\big)_{L^2_{\nu}}.
\end{align*}
Then by Lemma \ref{lem:para-p} and  Lemma \ref{lem:para-cal}, we get
\begin{align*}
\big(T_u \pa_x w, w\big)_{H^{3,0}_\nu}
\le CA(t)\|w\|_{H^{3,0}_{\nu}}^2.
\end{align*}

Thanks to $u=v=0$ and $\pa_y^2\tu=0$ on $y=0$,  we have
$F_1|_{y=0}=0.$ Then by integration by parts and Lemma \ref{lem:F-S}, we get
\begin{align*}
\big(\pa_yF_1,w\big)_{H^{3,0}_{\nu}}+\big(F_2,w\big)_{H^{3,0}_{\nu}}
\le& C\|F_1\|_{H^{3,0}_{\nu}}\|\pa_yw\|_{H^{3,0}_{\nu}}+\|F_2\|_{H^{3,0}_{\nu}}\|w\|_{H^{3,0}_{\nu}}\\
\le& C\big(1+A(t)^4\big)E(t)+C\|w\|_{H^{3,0}_\nu}^2+\f14\|\pa_yw\|_{H^{3,0}_\nu}^2.
\end{align*}

Summing up, we conclude the desired energy estimate.
\end{proof}

Using the equation \eqref{eq:Prandtl-3}, it is easy to prove the following
energy estimate of $\tu$ in lower order Sobolev space  with more exponential decay in $y$.

\begin{proposition}\label{prop:u-lower}
Let $\tu$ be a smooth solution of \eqref{eq:Prandtl-3} in $[0,T]$.
Then it holds that for any $t\in [0,T]$,
\begin{align*}
\frac d{dt}\|\tu\|_{H^{2,0}_{\om}}^2+\|\pa_y\tu\|_{H^{2,0}_{\om}}^2\le& C\big(1+A(t)^2\big)E(t).
\end{align*}

\end{proposition}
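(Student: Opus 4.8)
The plan is to run a weighted energy estimate directly on \eqref{eq:Prandtl-3}. For $k=0,1,2$ set $\tu_k=\pa_x^k\tu$; applying $\pa_x^k$ to the first equation of \eqref{eq:Prandtl-3} and taking the $L^2_\om$ inner product with $\tu_k$ gives
\[
\f12\f d{dt}\|\tu_k\|_{L^2_\om}^2-(\pa_y^2\tu_k,\tu_k)_{L^2_\om}+\big(\pa_x^k(u\pa_x\tu),\tu_k\big)_{L^2_\om}+\big(\pa_x^k(v\pa_y u),\tu_k\big)_{L^2_\om}=0.
\]
For the dissipative term I would integrate by parts twice in $y$. Since $\tu(t,x,0)=u(t,x,0)-u^s(t,0)=0$ we have $\tu_k|_{y=0}=0$, so no boundary term appears, and with $(\om^2)'=\f43\om^2$ a short computation yields
\[
-(\pa_y^2\tu_k,\tu_k)_{L^2_\om}=\|\pa_y\tu_k\|_{L^2_\om}^2-\f89\|\tu_k\|_{L^2_\om}^2 .
\]
Thus the only price of the growing weight $\om=e^{\f 23 y}$ is the lower order term $\f89\|\tu_k\|_{L^2_\om}^2\le\f89E(t)$, which we move to the right-hand side. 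Multiplying by $2$ and summing over $k$, the claim reduces to bounding the two convective inner products by $C(1+A(t)^2)E(t)$.

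For the first bracket, write $u=u^s+\tu$, so that $\pa_x^k(u\pa_x\tu)=u^s\pa_x^{k+1}\tu+\pa_x^k(\tu\pa_x\tu)$. Since $u^s=u^s(t,y)$ is independent of $x$, $(u^s\pa_x\tu_k,\tu_k)_{L^2_\om}=\int\om^2u^s\,\f12\pa_x(\tu_k^2)=0$ after integrating by parts in $x$. Expanding $\pa_x^k(\tu\pa_x\tu)$ by the Leibniz rule, the end terms $\tu\pa_x\tu_k$ and $\tu_k\pa_x\tu$, and (when $k=2$) the middle term $\pa_x\tu\,\pa_x^2\tu$, are all controlled --- for $\tu\pa_x\tu_k$ after one further integration by parts in $x$ --- by $\|\pa_x\tu\|_{L^\infty}\|\tu_k\|_{L^2_\om}^2$. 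Since $\pa_x\tu=\pa_xu$ and $A(t)$ dominates $e^y(1+y)^{-1}|\pa_xu|$, we have $\|\pa_x\tu\|_{L^\infty}\le CA(t)$, so this bracket is $\le CA(t)E(t)$.

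For the second bracket, split $v\pa_yu=v\pa_yu^s+v\pa_y\tu$ and use $\pa_yv=-\pa_x\tu$, i.e. $\pa_x^jv=-\int_0^y\pa_x^{j+1}\tu\,dy'$. This gives $v(t,x,0)=0$, the bound $\|v\|_{L^\infty}\le CA(t)$ (from $|\pa_x\tu|\le A(t)(1+y)e^{-y}$), and the $y$-independent pointwise bound $|\pa_x^jv(t,x,y)|\le\big(\int_0^\infty\mu^2|\pa_x^{j+1}\tu|^2\,dy'\big)^{1/2}$ by Cauchy--Schwarz, whose $L^2_x$ norm is $\|\pa_x^{j+1}\tu\|_{L^2_\mu}$. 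For $\pa_x^k(v\pa_yu^s)=(\pa_x^kv)\pa_yu^s$, Lemma \ref{lem:shear} gives $|\pa_yu^s|\le Ce^{-y}$; since $\om^2e^{-y}=e^{\f y3}$, Cauchy--Schwarz in $y$ (using $\int_0^\infty e^{-y/3}\,dy<\infty$) then in $x$ bounds this term by $C\|\pa_x^{k+1}\tu\|_{L^2_\mu}\|\pa_x^k\tu\|_{L^2_\mu}\le CE(t)$. For $\pa_x^k(v\pa_y\tu)$, the top order piece $v\,\pa_x^k\pa_y\tu=v\,\pa_y\tu_k$ is treated by integrating by parts in $y$ (the boundary term vanishes since $v|_{y=0}=0$): it becomes $-\f12\int\om^2\big(\f43v-\pa_x\tu\big)\tu_k^2$, which is $\le CA(t)\|\tu_k\|_{L^2_\om}^2$. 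The remaining Leibniz terms $(\pa_x^jv)(\pa_x^{k-j}\pa_y\tu)$ with $1\le j\le k$ are estimated by putting $\pa_x^jv$ into the pointwise bound above and the factor $\pa_x^{k-j}\pa_y\tu$ --- which equals $\pa_y\tu$ or $\pa_x\pa_yu$, both $\le CA(t)(1+y)e^{-y}$ by the definition of $A(t)$ --- into Cauchy--Schwarz in $y$, then Cauchy--Schwarz in $x$, giving $CA(t)\|\pa_x^{j+1}\tu\|_{L^2_\mu}\|\pa_x^{k-j}\tu\|_{L^2_\mu}\le CA(t)E(t)$.

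Collecting these bounds and summing over $k=0,1,2$ gives
\[
\f d{dt}\|\tu\|_{H^{2,0}_\om}^2+\|\pa_y\tu\|_{H^{2,0}_\om}^2\le C\big(1+A(t)\big)E(t)\le C\big(1+A(t)^2\big)E(t),
\]
as claimed. I do not anticipate a genuine obstacle; the points demanding care are the two boundary integrations by parts (the boundary terms vanish thanks to $\tu|_{y=0}=0$ and $v|_{y=0}=0$), checking that the exponential weight $\om$ generates only the harmless term $\f89\|\tu_k\|_{L^2_\om}^2$, and using the representation $v=-\int_0^y\pa_x\tu\,dy'$ to trade the otherwise unweighted factor $\pa_x^jv$ for an $L^2_\mu$ norm of $\pa_x^{j+1}\tu$ already contained in $E(t)$.
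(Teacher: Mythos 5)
The paper states this proposition without proof (calling it "easy to prove" directly from \eqref{eq:Prandtl-3}), and your direct weighted energy estimate on $\pa_x^k\tu$ is a correct implementation of exactly that route: the boundary terms vanish thanks to $\tu|_{y=0}=v|_{y=0}=0$, the weight $\om$ costs only a lower-order term, and the Hardy-type bound $|\pa_x^jv|\le\|\mu\pa_x^{j+1}\tu\|_{L^2_y}$ correctly absorbs the unweighted velocity into $E(t)$. The only blemish is a typo in the last displayed estimate, where $\|\pa_x^{k-j}\tu\|_{L^2_\mu}$ should read $\|\tu_k\|_{L^2_\om}$ (the test function), but the argument itself is sound.
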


\subsection{Sobolev estimate in vertical direction}

To close the energy estimates, we need to derive the high order derivative estimates in the vertical variable $y$. For this part, we don't need to use the monotonicity of the solution.

\begin{proposition}\label{prop:u-energy-s}
Let $\tu$ be a smooth solution of (\ref{eq:Prandtl-3}) in $[0,T]$. It holds that for any $t\in [0,T]$,
\begin{align*}
&\f d {dt}\Big(\|\tu\|_{H_\mu^{1,0}}^2+\|\pa_y\tu\|_{H_\mu^{1,0}}^2+\|\tu_t\|_{H_\mu^{1,0}}^2\Big)+\Big(\|\pa_y\tu\|_{H_\mu^{1,0}}^2+\|\tu_t\|_{H_\mu^{1,0}}^2+\|\pa_y\tu_t\|_{H_\mu^{1,0}}^2\Big)\\
&\le C\big(1+A(t)^4\big)\big(E(t)+\|\tu_t\|_{H^{1,0}_\mu}^2\big)+C\big(1+A(t)\big)\|\tu\|_{H_\mu^{2,2}}\|\tu_t\|_{H^{1,0}_\mu}.
\end{align*}
\end{proposition}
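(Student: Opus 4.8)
The plan is to derive the stated energy estimate for $\tu$ in the vertical direction by differentiating the equation \eqref{eq:Prandtl-3} in $y$ and in $t$, and performing weighted $L^2_\mu$ energy estimates against the natural test functions. The quantities $\tu$, $\pa_y\tu$ and $\tu_t = \pa_t\tu$ all satisfy evolution equations with the parabolic principal part $\pa_t - \pa_y^2$, so the diffusion term will produce the dissipation $\|\pa_y\tu\|_{H^{1,0}_\mu}^2$, $\|\pa_y\tu_t\|_{H^{1,0}_\mu}^2$, and one power of $\|\tu_t\|_{H^{1,0}_\mu}^2$ from the $\pa_y\tu$ equation after integration by parts. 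The weight $\mu=e^{y/2}$ grows, so the integration by parts in $y$ will generate an extra lower-order term of the form $C\|\cdot\|_{H^{1,0}_\mu}^2$, which is harmless since the dissipation is positive; one must also be careful with boundary terms at $y=0$, where $\tu_t|_{y=0}=0$ (inherited from $\tu|_{y=0}=0$) and one uses $\pa_y^2\tu|_{y=0}=0$ as in the previous proofs.

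First I would write out the equation for $\tu_t$ by differentiating the first equation of \eqref{eq:Prandtl-3} in time:
\begin{align*}
\pa_t\tu_t + u\pa_x\tu_t + v\pa_y u_t - \pa_y^2\tu_t = -u_t\pa_x\tu - v_t\pa_y u,
\end{align*}
and similarly the equation for $\pa_y\tu$, which introduces the terms $\pa_y u\,\pa_x\tu$, $\pa_y v\,\pa_y u = -\pa_x\tu\,\pa_y u$, and $v\,\pa_y^2 u$. Then I would take $\pa_x^j$ for $j\le 1$ of each equation, test against $\mu^2\pa_x^j(\cdot)$, integrate by parts in $y$, and collect terms. The transport term $u\pa_x$ contributes, after commutators and integration by parts in $x$, a term bounded by $C\,A(t)\,(\text{norm})^2$ using $|\pa_x u|\le A(t)(1+y)e^{-y}$ together with $\mu^2 = e^y$; the terms $v\pa_y(\cdot)$ are handled by writing $v=-\int_0^y\pa_x\tu\,dy'$ and using the pointwise bounds in $A(t)$ for $\pa_y u$, $\pa_y^2 u$, $\pa_x\pa_y u$, losing at most quadratic powers of $A(t)$. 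The genuinely inhomogeneous right-hand sides like $v\,\pa_y^2 u$ and $v_t\,\pa_y u$ are the source of the cross term $C(1+A(t))\|\tu\|_{H^{2,2}_\mu}\|\tu_t\|_{H^{1,0}_\mu}$: estimating $\|v\pa_y^2 u\|$ in a way that keeps only one copy of $\tu_t$ (through $v_t$) and puts the remaining two $y$-derivatives on $\tu$ forces the $H^{2,2}_\mu$ norm to appear, and $v_t = -\int_0^y\pa_x\tu_t\,dy'$ must be controlled by $\|\tu_t\|_{H^{1,0}_\mu}$ via a Hardy-type inequality in $y$ adapted to the weight $\mu$.

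The main obstacle I anticipate is bookkeeping the exact powers of $A(t)$ and, more subtly, ensuring that every term on the right either sits inside $E(t)$ (which contains $\|\tu\|_{H^{1,2}_\mu}^2$, $\|\tu\|_{H^{3,1}_\mu}^2$, $\|\tu\|_{H^{2,0}_\om}^2$) or is absorbed into the parabolic dissipation, with the single exception of the $\|\tu\|_{H^{2,2}_\mu}\|\tu_t\|_{H^{1,0}_\mu}$ term which is deliberately left unabsorbed because $H^{2,2}_\mu$ is not part of $E(t)$ and will be handled later via the interpolation inequality \eqref{eq:interpolation} together with $\pa_y^2\tu|_{y=0}=0$. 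Concretely, $\|\tu\|_{H^{2,2}_\mu}$ interpolates between $\|\tu\|_{H^{1,3}_\mu}$ and $\|\tu\|_{H^{3,1}_\mu}$, so this cross term is consistent with the dissipation structure established elsewhere. One has to be attentive that the $v_t\pa_y u$ term does not instead generate $\|\tu_t\|_{H^{1,0}_\mu}^2$ times something un-absorbable; using $|\pa_y u|\le C e^{-y}$ and the Hardy inequality one gets $\|v_t\pa_y u\|_{L^2_\mu}\le C\|\tu_t\|_{H^{1,0}_\mu}$, so this contributes only to $C(1+A(t)^4)\|\tu_t\|_{H^{1,0}_\mu}^2$ after Young's inequality, which is why that term is present on the right-hand side. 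Assembling all pieces and applying Young's inequality to split off the dissipative quantities with small constants yields exactly the claimed inequality.
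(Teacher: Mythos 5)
Your overall strategy — differentiate the $\tu$ equation in $t$, test against $\tu_t$ in $H^{1,0}_\mu$, and combine with the lower-order estimates for $\tu$ and $\pa_y\tu$ — is indeed what the paper does. But you misidentify the mechanism that produces the cross term $C(1+A(t))\|\tu\|_{H^{2,2}_\mu}\|\tu_t\|_{H^{1,0}_\mu}$, and in doing so you reach a contradictory set of claims.

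You first name $v\pa_y^2 u$ and $v_t\pa_y u$ as sources of the cross term, but then later assert that ``$\|v_t\pa_y u\|_{L^2_\mu}\le C\|\tu_t\|_{H^{1,0}_\mu}$ so this contributes only to $C(1+A(t)^4)\|\tu_t\|_{H^{1,0}_\mu}^2$.'' Neither statement is right. The $L^2_\mu$ estimate on $v_t\pa_y u$ is indeed closable in $\|\tu_t\|_{H^{1,0}_\mu}$, but the inner product is taken in $H^{1,0}_\mu$, so one must bound $\|\pa_x(v_t\pa_y u)\|_{L^2_\mu}$ as well. Since $\pa_x v_t = -\int_0^y\pa_x^2\tu_t\,dy'$, that term genuinely needs two $x$-derivatives of $\tu_t$, i.e.\ $\|\tu_t\|_{H^{2,0}_\mu}$, and there is no Hardy inequality that removes an $x$-derivative. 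The paper's route is precisely: obtain $\big(v_t\pa_y u,\tu_t\big)_{H^{1,0}_\mu}\le C(1+A(t))\|\tu_t\|_{H^{2,0}_\mu}\|\tu_t\|_{H^{1,0}_\mu}$, and only afterwards bound $\|\tu_t\|_{H^{2,0}_\mu}\le \|\tu\|_{H^{2,2}_\mu}+C(1+A(t))(\|\tu\|_{H^{3,0}_\mu}+\|\tu\|_{H^{2,1}_\mu})$ by substituting the equation $\tu_t=\pa_y^2\tu-u\pa_x\tu-v\pa_y u$. That substitution, not a direct estimate of $v\pa_y^2 u$ in the $\pa_y\tu$-energy (which closes cleanly with $C(1+A(t)^2)E(t)$), is how $\|\tu\|_{H^{2,2}_\mu}$ enters. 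Your proposal as written would leave $\|\tu_t\|_{H^{2,0}_\mu}$ un-resolved, so the argument does not actually close; supplying the equation-based estimate for $\|\tu_t\|_{H^{2,0}_\mu}$ is the missing step.
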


\begin{proof}

We follow the proof of Proposition 6.1 in \cite{CWZ} step by step again.
First of all, we have
\begin{align}\label{eq:u-L2}
&\frac d{dt}\|\tu\|_{H_\mu^{1,0}}^2+\|\pa_y\tu\|_{H_\mu^{1,0}}^2
\le C\big(1+A(t)\big)E(t),\\
\label{eq:u-H1}
&\frac d{dt}\|\pa_y\tu\|_{H_\mu^{1,0}}^2+\|\pa_t\tu\|_{H_\mu^{1,0}}^2
\le C\big(1+A(t)^2\big)E(t).
\end{align}

Next we present $H^{1,2}_\mu$ estimate in detail.
Taking the time derivative to (\ref{eq:Prandtl-3}), we obtain
\beno
\pa_t \tu_t-\pa_{y}^2\tu_t=-u_t\pa_x \tu-v_t\pa_{y}u-u \pa_x \tu_t-v\pa_{y}u_t.
\eeno
Making $H^{1,0}_\mu$ inner product with $\tu_t$, we get
\begin{align*}
\f12\frac d{dt}\|\tu_t\|_{H_\mu^{1,0}}^2-\big(\pa_y^2\tu_t,\tu_t\big)_{H_\mu^{1,0}}
=&-\big(u_t\pa_x\tu,\tu_t\big)_{H_\mu^{1,0}}-\big(v_t\pa_yu,\tu_t\big)_{H_\mu^{1,0}}\\
&-\big(u\pa_x\tu_t,\tu_t\big)_{H_\mu^{1,0}}-\big(v\pa_yu_t,\tu_t\big)_{H_\mu^{1,0}}.
\end{align*}
We get by integration by parts that
\begin{align*}
-\big(\pa_y^2\tu_t,\tu_t\big)_{H_\mu^{1,0}}\ge\frac12\|\pa_y\tu_t\|_{H_\mu^{1,0}}^2-C\|\tu_t\|_{H_\mu^{1,0}}^2.
\end{align*}
Thanks to $\|u_t\|_{L^\infty}\le C\big(1+A(t)^2\big)$(using \eqref{eq:Prandtl}),  it is easy to see that
\begin{align*}
\big(u_t\pa_x\tu,\tu_t\big)_{H_\mu^{1,0}}\le& \|u_t\pa_x\tu\|_{H_\mu^{1,0}}\|\tu_t\|_{H^{1,0}_\mu}\\
\leq&
C\big(A(t)\|\tu_t\|_{H_\mu^{1,0}}+\|u_t\|_{L^\infty}\|\tu\|_{H_\mu^{2,0}}\big)\|\tu_t\|_{H^{1,0}_\mu}\\
\le& C\big(1+A(t)^2\big)\big(E(t)+\|\tu_t\|_{H^{1,0}_\mu}^2\big),
\end{align*}
and
\begin{align*}
\big(v_t\pa_yu,\tu_t\big)_{H_\mu^{1,0}}\le& \|v_t\pa_yu\|_{H^{1,0}_\mu}\|\tu_t\|_{H^{1,0}_\mu}\\
\le& C\big(\|v_t\pa_yu^s\|_{H^{1,0}_\mu}+\|v_t\pa_y\tu\|_{H^{1,0}_\mu}\big)\|\tu_t\|_{H^{1,0}_\mu}\\
\leq&
C\big(\|v_t\|_{H^1_xL_y^\infty}\|\pa_yu^s\|_{L^2_{y,\mu}} +\|v_t\|_{H^1_xL_y^\infty}\|\pa_y\tu\|_{L^2_{y,\mu}(W^{1,\infty}_x)}\big)\|\tu_t\|_{H^{1,0}_\mu}\\
\leq&
C\big(1+A(t)\big)\|\tu_t\|_{H_\mu^{2,0}}\|\tu_t\|_{H^{1,0}_\mu}.
\end{align*}
Thanks to $\pa_xu+\pa_y v=0$, we get by integration by parts that
\begin{align*}
&\big(u\pa_x\tu_t,\tu_t\big)_{H_\mu^{1,0}}+\big(v\pa_yu_t,\tu_t\big)_{H_\mu^{1,0}}\\
&=\big(v\pa_yu_t^s,\tu_t\big)_{H_\mu^{1,0}}
+\big(\pa_x\tu\pa_x\tu_t,\pa_x\tu_t\big)_{L^2_\mu}+\big(\pa_xv\pa_yu_t,\pa_x\tu_t\big)_{L^2_\mu}\\
&\quad-\f 12\big(v\tu_t,\tu_t\big)_{L^2_\mu}-\f 12\big(v\pa_x\tu_t,\pa_x\tu_t\big)_{L^2_\mu}\\
&\le C\big((1+A(t))\|\tu_t\|_{H^{1,0}_\mu}^2+C\big(1+A(t)^4\big)\|\tu\|^2_{H^{2,0}_\mu}+\f14\|\pa_y\tu_t\|_{H^{1,0}_\mu},
\end{align*}
here we used
\beno
\big(\pa_xv\pa_yu_t,\pa_x\tu_t\big)_{L^2_\mu}=\big(\pa_x^2\tu u_t,\pa_x\tu_t\big)_{L^2_\mu}-\big(\pa_xv u_t,\pa_y\pa_x\tu_t\big)_{L^2_\mu}
-\big(\pa_xv u_t,\pa_x\tu_t\big)_{L^2_\mu}.
\eeno
Thus, we deduce that
\begin{align}\label{eq:u-H2}
\frac d{dt}\|\tu_t\|_{H_\mu^{1,0}}^2+\|\pa_y\tu_t\|_{H_\mu^{1,0}}^2
\le& C\big(1+A(t)^4\big)\big(E(t)+\|\tu_t\|_{H^{1,0}_\mu}^2\big)\\
&+C\big(1+A(t)\big)\|\tu_t\|_{H_\mu^{2,0}}\|\tu_t\|_{H^{1,0}_\mu}.\nonumber
\end{align}

Summing up (\ref{eq:u-L2})--(\ref{eq:u-H2}), we conclude that
\begin{align}\label{eq:u-total}
&\f d {dt}\Big(\|\tu\|_{H_\mu^{1,0}}^2+\|\pa_y\tu\|_{H_\mu^{1,0}}^2+\|\tu_t\|_{H_\mu^{1,0}}^2\Big)+\Big(\|\pa_y\tu\|_{H_\mu^{1,0}}^2+\|\tu_t\|_{H_\mu^{1,0}}^2+\|\pa_y\tu_t\|_{H_\mu^{1,0}}^2\Big)\nonumber\\
&\le C\big(1+A(t)^4\big)\big(E(t)+\|\tu_t\|_{H^{1,0}_\mu}^2\big)+C\big(1+A(t)\big)\|\tu_t\|_{H_\mu^{2,0}}\|\tu_t\|_{H^{1,0}_\mu}.
\end{align}

It remains to estimate $\|\tu_t\|_{H^{2,0}_\mu}$. Using the first equation of (\ref{eq:Prandtl-3}),
we get
\begin{align}
\|\tu_t\|_{H^{2,0}_\mu}
\le& \|\tu\|_{H^{2,2}_\mu}+\|u\pa_x\tu\|_{H^{2,0}_\mu}+\|v\pa_y u\|_{H^{2,0}_\mu}\nonumber\\
\le& \|\tu\|_{H^{2,2}_\mu}+C\big(1+A(t)\big)\big(\|\tu\|_{H^{3,0}_\mu}+\|\tu\|_{H^{2,1}_\mu}\big),\nonumber
\end{align}
from which and  (\ref{eq:u-total}), we conclude our result.
\end{proof}

\subsection{Relation between good unknown $w$ and $\tu$}

To recover the estimates of $\tu$ from those of $w$, we need the following lemma.

\begin{lemma}\label{lem:relation}
It holds that
\beno
&&\|\tu\|_{H^{3,0}_\mu}\le C\big(1+A(t)^2\big)\big(\|w\|_{H^{3,0}_\nu}+\|\tu\|_{H^{2,0}_\om}\big),\\
&&\|\tu\|_{H^{3,1}_\mu}\le C\big(1+A(t)^4\big)\big(\|w\|_{H^{3,0}_\nu}+\|\tu\|_{H^{2,0}_\om}+\|\tu\|_{H^{2,0}_\om}^\f12\|\tu\|_{H^{2,2}_\mu}^\f12\big),\\
&&\|\tu\|_{H^{1,2}_\mu}\le \|\tu_t\|_{H^{1,0}_\mu}+C\big(1+A(t)\big)\|\tu\|_{H^{2,0}_\mu},\\
&&\|\tu\|_{H^{1,3}_\mu}\le \|\pa_y\tu_t\|_{H^{1,0}_\mu}+\|\tu_t\|_{H^{1,0}_\mu}+C\big(1+A(t)\big)\|\tu\|_{H^{2,1}_\mu}.
\eeno
\end{lemma}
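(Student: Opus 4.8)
The plan is to establish each of the four inequalities by inverting the relation $w = \pa_y T_a \tu$ and by using the equation \eqref{eq:Prandtl-3} to trade vertical derivatives for time derivatives. For the first estimate, note that $b = \pa_y u$ satisfies $T_b T_a = \mathrm{Id} + (T_b T_a - T_{ba})$ and $T_{ba} = \mathrm{Id}$ up to the lower-order contribution coming from the difference of $ba$ and $1$; more precisely I would write $\tu = T_b\big(\int_0^y w\,dy'\big)$ modulo a commutator error. Integrating $w$ in $y$ from $0$ (using $\lim_{y\to\infty}w=0$ and the boundary behavior) gives $T_a\tu$ up to a polynomial-in-$y$ ambiguity killed by the decay, then applying $T_b$ recovers $\tu$. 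The weight bookkeeping is that $T_b$ with $b\sim e^{-y}$ in the relevant regime turns the $\nu=e^{-y/2}$ weight on $w$ into the $\mu=e^{y/2}$ weight on $\tu$; the extra $\|\tu\|_{H^{2,0}_\om}$ term absorbs the contribution near $y=0$ and the $R$-type remainder in Bony's decomposition, and the factor $1+A(t)^2$ comes from bounding $\|e^{-y}\ta\|$, $\|(1+y)\tu\|$ and $\|\pa_y a\|$-type quantities by $A(t)$ via the paraproduct and paracommutator estimates of Lemma \ref{lem:para-p} and Lemma \ref{lem:para-cal}.

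For the second inequality, $\|\tu\|_{H^{3,1}_\mu}$, I would apply the interpolation inequality \eqref{eq:interpolation} in the form $\|\tu\|_{H^{3,1}_\mu}\le \|\tu\|_{H^{2,2}_\mu}^{1/2}\|\tu\|_{H^{4,0}_\mu}^{1/2}$ is the wrong direction, so instead I would differentiate the defining relation once in $y$: $\pa_y w = \pa_y^2 T_a\tu$ controls $\pa_y^2 \tu$ modulo paracommutators with $\pa_y a$, and then combine this with the first estimate and the $H^{2,0}_\om$ control to reach $H^{3,1}_\mu$; the half-powers $\|\tu\|_{H^{2,0}_\om}^{1/2}\|\tu\|_{H^{2,2}_\mu}^{1/2}$ and the worse power $A(t)^4$ reflect that one now pays two factors of $A(t)^2$ (one to invert the paraproduct, one because $\pa_y a$ involves $\pa_y^2 u /(\pa_y u)^2$) together with a Gagliardo–Nirenberg split of the lowest-order term. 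For the third and fourth inequalities the idea is purely to read \eqref{eq:Prandtl-3} as $\pa_y^2\tu = \pa_t\tu + u\pa_x\tu + v\pa_y u$, so that $\pa_y^2\tu$ (and $\pa_y^3\tu$ after one more $\pa_y$) is bounded by $\pa_t\tu$, $\pa_y\pa_t\tu$ and first-order horizontal quantities, each of which is absorbed into $\|\tu_t\|_{H^{1,0}_\mu}$, $\|\pa_y\tu_t\|_{H^{1,0}_\mu}$ or $(1+A(t))\|\tu\|_{H^{2,0}_\mu}$, $(1+A(t))\|\tu\|_{H^{2,1}_\mu}$ respectively, using $v=-\int_0^y\pa_x\tu\,dy'$ and the pointwise bounds on $u,\pa_y u$ encoded in $A(t)$.

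The main obstacle I expect is the inversion of the paraproduct operator $T_a$ (equivalently, showing $T_b$ is a left inverse up to controllable errors) with the correct weighted norms. The difficulty is twofold: first, $a=1/\pa_y u$ does not decay — only $\ta = a - (\pa_y u^s)^{-1}$ does — so one must carefully split off the shear-flow part and use Lemma \ref{lem:shear}, exactly as in the proof of Lemma \ref{lem:F-S}; second, the composition error $T_bT_a - T_{ba}$ must be estimated in the anisotropic $H^{3,0}$ setting where Lemma \ref{lem:para-cal} only gives a gain of $\sigma\in(0,1]$ horizontal derivatives, which forces the appearance of the $\|\tu\|_{H^{2,0}_\om}$ and half-power terms on the right-hand side rather than a clean estimate. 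Handling the boundary contribution near $y=0$ when integrating $w$ back to $T_a\tu$ — reconciling the Neumann condition $\pa_y w|_{y=0}=0$ with $\tu(x,0)=0$ — is the delicate point, and is presumably where the $H^{2,0}_\om$ norm (with its stronger $e^{2y/3}$ weight but weaker vertical regularity) enters as the natural low-frequency substitute.
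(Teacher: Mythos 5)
Your overall strategy matches the paper's: paraproduct inversion $\tu = T_b\big(\int_0^y w\,dy'\big) + (T_{\tb\ta}-T_{\tb}T_{\ta})\tu$ for the first estimate, and reading the equation $\pa_y^2\tu=\pa_t\tu+u\pa_x\tu+v\pa_yu$ for the third and fourth. One small correction on the first estimate: the constant of integration is fixed by the \emph{Dirichlet} condition $\tu|_{y=0}=0$ (hence $T_a\tu|_{y=0}=0$), not by the Neumann condition $\pa_yw|_{y=0}=0$ or the decay of $w$ at infinity.

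There is a genuine slip in your plan for the second estimate. You propose to differentiate the defining relation once more in $y$ and use ``$\pa_y w = \pa_y^2 T_a\tu$ controls $\pa_y^2\tu$'' --- but $\|\tu\|_{H^{3,1}_\mu}$ only involves $\pa_x^\al\pa_y^\beta\tu$ with $\beta\le 1$, so the extra object to control is $\pa_y\tu$ at horizontal level $3$, not $\pa_y^2\tu$; and more importantly $\|\pa_y w\|_{H^{3,0}_\nu}$ does \emph{not} appear on the right-hand side of the lemma, so you cannot pay for that extra $y$-derivative of $w$. The paper does not differentiate $w$ at all: it expands the \emph{undifferentiated} relation $w=\pa_y(T_a\tu)=T_a\pa_y\tu + T_{\pa_y a}\tu$, solves for $T_a\pa_y\tu$, applies $T_b$ to obtain
\beno
\pa_y\tu = T_b w - T_bT_{\pa_y a}\tu + \big(T_{ba}-T_bT_a\big)\pa_y\tu,
\eeno
and then handles the last commutator term via the interpolation $\|(1+y)\pa_y\tu\|_{H^{2,0}_\mu}\le C\|\tu\|_{H^{2,0}_\om}^{1/2}\|\tu\|_{H^{2,2}_\mu}^{1/2}$, which is precisely where the half-power term originates (not from a Gagliardo--Nirenberg split of a lowest-order term). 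If you run your version literally you will be stuck needing $\|\pa_yw\|_{H^{3,0}_\nu}$; using the undifferentiated relation as above removes the difficulty.
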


\begin{proof}
To control $\|\tu\|_{H^{3,0}_\mu}$, we use the formula
\beno
\tu=T_bT_a\tu+\big(T_{ba}-T_{b}T_a\big)\tu=T_b\Big(\int_0^ywdy'\Big)+\big(T_{\tb\ta}-T_{\tb}T_{\ta}\big)\tu,
\eeno
which along with Lemma \ref{lem:para-p} and Lemma \ref{lem:para-cal}
gives
\begin{align*}
\|\tu\|_{H^{3,0}_\mu}\le C\big(1+A(t)\big)\|w\|_{H^{3,0}_{\nu}}+C\big(1+A(t)^2\big)\|\tu\|_{H^{2,0}_\om}.
\end{align*}
Here we used
\beno
\Big\|e^{-\f y2 }\int_{0}^{y}wdy'\Big\|_{H^{3,0}}= \Big\|\int_{0}^{y}e^{-\f{y-y'}{2} }e^{-\f {y'}{2} }wdy'\Big\|_{H^{3,0}}\leq C\|w\|_{H^{3,0}_{\nu}}.
\eeno

Using the relation
\beno
\pa_y\tu=T_bw+T_bT_{\pa_y^2u/\pa_y u}\tu+\big(T_{ab}-T_bT_a\big)\pa_y\tu,
\eeno
we infer from Lemma \ref{lem:para-p} and Lemma \ref{lem:para-cal} that
\beno
\|\pa_y\tu\|_{H^{3,0}_\mu}\le C\big(1+A(t)^2\big)\big(\|w\|_{H^{3,0}_\nu}+\|\tu\|_{H^{3,0}_\mu}\big)+C\big(1+A(t)^2\big)\|(1+y)\pa_y\tu\|_{H^{2,0}_\mu}.
\eeno
By the interpolation, we have
\beno
\|(1+y)\pa_y\tu\|_{H^{2,0}_\mu}\le C\|\tu\|_{H^{2,0}_\om}^\f12\|\tu\|_{H^{2,2}_\mu}^\f12.
\eeno
Thus, we conclude that
\beno
\|\tu\|_{H^{3,1}_\mu}\le C\big(1+A(t)^4\big)\big(\|w\|_{H^{3,0}_\nu}+\|\tu\|_{H^{2,0}_\om}+\|\tu\|_{H^{2,0}_\om}^\f12\|\tu\|_{H^{2,2}_\mu}^\f12\big).
\eeno

Using the first equation of (\ref{eq:Prandtl-3}), we get
\begin{align*}
\|\tu\|_{H^{1,2}_\mu}\le& \|\tu_t\|_{H^{1,0}_\mu}+\|u\pa_x\tu\|_{H^{1,0}_\mu}+\|v\pa_y u\|_{H^{1,0}_\mu}\\
\le& \|\tu_t\|_{H^{1,0}_\mu}+C\big(1+A(t)\big)\|\tu\|_{H^{2,0}_\mu}.
\end{align*}

Using (\ref{eq:Prandtl-3}) again, we have
\begin{align*}
\|\pa_y^3\tu\|_{H^{1,0}_\mu}\le& \|\pa_y\tu_t\|_{H^{1,0}_\mu}+\|u\pa_x\tu\|_{H^{1,1}_\mu}+\|v\pa_y u\|_{H^{1,1}_\mu}\\
\le& \|\pa_y\tu_t\|_{H^{1,0}_\mu}+C\big(1+A(t)\big)\|\tu\|_{H^{2,1}_\mu},\end{align*}
which implies the last inequality.
\end{proof}

\section{Monotonicity and no separation}

In this section, we prove that the separation can not occur for the solution
constructed in Theorem \ref{thm:main1} in $[0,T^*)$, where $T^*$ is the maximal existence time.  To this end, we need to use the Crocco transformation defined by
\begin{align*}\label{Crocco}
    t=\tau,\  \xi=x,\  \eta=u(x,y,\tau),\  w(\eta,\xi,t)=\partial_yu(x,y,\tau).\end{align*}
Then the initial-boundary value problem \eqref{eq:Prandtl}  becomes
\begin{equation}\label{eq:Pran-Cro}
\left\{\begin{aligned}
&-\partial_t  w-\eta \partial_\xi w+w^2\partial_{\eta \eta} w=0\quad (\xi,\eta)\in \R\times (0,1),\\
&\partial_\eta w\mid _{\eta=0}=0,\quad w\mid _{\eta=1}=0.
\end{aligned}\right.
\end{equation}

\begin{proposition} \label{prop:mono}
Let $w$ be a solution of \eqref{eq:Pran-Cro} in $[0,T^*)$, which  satisfies
\begin{align}\label{condition}\begin{split}
    &c(1-\eta)\leq w\leq C(1-\eta)\quad on\quad\{t=0\}\cup\{\eta=1\}\cup\{|\xi|=+\infty\}.
 \end{split}
\end{align}
Then there exists a positive constant $c_1$ depending on $c, C$ in \eqref{condition} and $T^*$ so that
\begin{align*}
c_1(1-\eta)\leq w(\eta,\xi, t)\leq C(1-\eta)\quad\text{for }\, (\eta,\xi,t)\in [0,1]\times\R\times [0,T^*).
\end{align*}
\end{proposition}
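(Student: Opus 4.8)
The plan is to treat \eqref{eq:Pran-Cro} as a degenerate parabolic problem and to compare $w$ with explicit barriers via the maximum principle. After multiplying the equation by $-1$, a smooth solution satisfies $Lw=0$, where $L:=\partial_t+\eta\partial_\xi-w^2\partial_{\eta\eta}$; the diffusion coefficient $w^2\ge0$ degenerates only at $\eta=1$, but there $w$ obeys the Dirichlet condition $w|_{\eta=1}=0$, so the degeneracy is harmless. I would first isolate a comparison device, to be used three times: if $g$ is bounded and $C^2$ on $[0,T]\times\R\times[0,1]$ ($T<T^*$), with $Lg\ge0$ in $(0,T]\times\R\times(0,1)$, with $g\ge0$ on $\{t=0\}\cup\{\eta=1\}\cup\{|\xi|=+\infty\}$, and with $\partial_\eta g|_{\eta=0}<0$, then $g\ge0$. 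The proof is standard: set $g_\delta:=g+\delta\big(t+\tfrac12(1+\xi^2)^{1/2}\big)$, so that $Lg_\delta\ge\delta/2>0$, $g_\delta\to+\infty$ as $|\xi|\to\infty$, $\partial_\eta g_\delta|_{\eta=0}<0$, and $g_\delta\ge0$ on $\{t=0\}\cup\{\eta=1\}\cup\{|\xi|=+\infty\}$; a negative minimum of $g_\delta$ would be attained at a finite point, which can lie neither in $(0,T]\times\R\times(0,1)$ (the first/second-order conditions there give $Lg_\delta\le0$), nor on $\{\eta=0\}$ (one-sided minimality in $\eta$ gives $\partial_\eta g_\delta|_{\eta=0}\ge0$), nor on the remaining part of the parabolic boundary (where $g_\delta\ge0$); letting $\delta\to0$ gives $g\ge0$.

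\textbf{Step 1: the a priori bound $|w|\le C(1-\eta)$.} The profiles $\pm C(1-\eta)$ satisfy $L(\pm C(1-\eta))=0$. Applying the device to $g:=C(1-\eta)-w$---which has $Lg=0$, is $\ge0$ on the parabolic boundary by \eqref{condition}, and has $\partial_\eta g|_{\eta=0}=-C<0$---yields $w\le C(1-\eta)$. Since $-w$ solves the same equation with the same boundary conditions and $-w\le C(1-\eta)$ on the parabolic boundary, the same reasoning applied to $C(1-\eta)+w$ gives $w\ge-C(1-\eta)$. Hence $w^2\le C^2(1-\eta)^2$ on $[0,T^*)\times\R\times[0,1]$.

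\textbf{Step 2: the lower bound.} Here a time-independent barrier such as $c(1-\eta)$ is inadmissible in the device, since its $\eta$-derivative at $\eta=0$ has the wrong sign. I would instead seek $\underline w=\varepsilon e^{-\lambda t}\psi(\eta)$ with $\psi>0$ on $[0,1)$, $\psi(1)=0$, and---crucially---$\psi'(0)>0$; the simplest choice is $\psi(\eta):=(1-\eta)(1+2\eta)$, for which $(1-\eta)\le\psi\le3(1-\eta)$, $\psi''\equiv-4$, $\psi'(0)=1$. Choosing $\lambda:=4C^2$ and $\varepsilon:=c/3$, one has $\underline w\le c(1-\eta)\le w$ on the parabolic boundary, and, by Step 1,
\[
L\underline w=\tfrac c3 e^{-\lambda t}\big(4w^2-\lambda\psi\big)\le\tfrac c3 e^{-\lambda t}\big(4C^2(1-\eta)^2-\lambda\psi\big)\le\tfrac c3 e^{-\lambda t}(4C^2-\lambda)\psi=0,
\]
with $\partial_\eta\underline w|_{\eta=0}=\tfrac c3 e^{-\lambda t}\psi'(0)>0$. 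Thus $g:=w-\underline w$ has $Lg=-L\underline w\ge0$, is $\ge0$ on the parabolic boundary, and has $\partial_\eta g|_{\eta=0}<0$; the device gives $w\ge\tfrac c3 e^{-\lambda t}\psi(\eta)\ge\tfrac c3 e^{-4C^2T^*}(1-\eta)$ on $[0,T^*)\times\R\times[0,1]$. Together with Step 1 this proves the proposition with $c_1=\tfrac c3 e^{-4C^2T^*}$, which depends only on $c,C,T^*$.

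\textbf{Where the difficulty lies.} The entire subtlety is the Neumann condition $\partial_\eta w|_{\eta=0}=0$: it rules out the naive lower comparison with a stationary profile, and repairing this forces a barrier that is concave in $\eta$ (so that $\psi'(0)>0$ can coexist with $\psi(1)=0$), which in turn forces the factor $e^{-\lambda t}$ with $\lambda\sim C^2$ to absorb the unfavorable term $-w^2\psi''>0$ in $L\underline w$. This exponential is exactly the mechanism forcing $c_1$ to degenerate as $T^*\to+\infty$. The remaining ingredients---running the maximum principle on the unbounded strip for a degenerate operator, and handling minima on the final time slice $t=T$ and on the degenerate face $\eta=1$---are routine, and are handled respectively by the $\delta\big(t+\tfrac12(1+\xi^2)^{1/2}\big)$ penalization and by the Dirichlet condition at $\eta=1$.
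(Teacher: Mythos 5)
Your proof is correct and follows the same basic strategy as the paper: treat the Crocco equation as a degenerate parabolic problem and compare $w$ with explicit barriers via a maximum principle adapted to the Neumann condition at $\eta=0$. The two technical choices that differ are worth noting. For the lower bound the paper uses the trigonometric barrier $\phi(\eta)=e^{\pi\eta/2}\sin\bigl(\tfrac{\pi}{2}(1-\eta)\bigr)$ (multiplied by $e^{-\beta t}\alpha$), while you use the polynomial $\psi(\eta)=(1-\eta)(1+2\eta)$; both are concave, vanish at $\eta=1$, have strictly positive $\eta$-derivative at $\eta=0$, and are comparable to $1-\eta$, so both force the same exponential degeneration $c_1\sim e^{-\lambda T^*}$ with $\lambda\sim C^2$. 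Second, the paper achieves strictness of the interior inequality by adding $\pm\varepsilon(1-\eta)^3$, which makes $Lf$ strictly signed via the factor $6w^2\varepsilon(1-\eta)$ (tacitly relying on $w^2>0$), and is terse about why the extremum on the unbounded strip $\xi\in\R$ is attained; your $\delta\bigl(t+\tfrac12(1+\xi^2)^{1/2}\bigr)$ penalization handles both issues simultaneously, giving strict interior inequality independently of any positivity of $w$ and forcing the extremum of $g_\delta$ to occur at a finite point. In this sense your write-up is slightly more elementary in the barrier choice and slightly more careful about the unbounded $\xi$-domain, but it is genuinely the same proof.
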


\begin{proof}The proof is inspired by \cite{XZ}.  First of all, we prove that $w\leq C(1-\eta)$. Fix any $t_0<T^*$ and set
$$f=-(\varepsilon+C)(1-\eta)+\varepsilon(1-\eta)^3+w$$ with $\varepsilon$ a sufficiently small positive constant. On $\{t=0\}\cup\{\eta=1\}\cup\{|\xi|=+\infty\},$ we have $$f\leq \big(C-(\varepsilon+C)+\varepsilon\big)(1-\eta)=0.$$
 Let $L=-\partial_t-\eta\partial_\xi+w^2\partial_{\eta\eta}^2$.  Then we have
$$Lf=6w^2\varepsilon(1-\eta)>0.$$
Hence, the maximum of $f$ is not attained in $(0,1)\times\R \times(0,t_0].$ On the other hand,
$$\partial_\eta f\mid_{\eta=0}=C+\varepsilon-3\varepsilon>0.$$ Then the maximum of $f$ is not attained on $\{\eta=0\}.$
In summary, we have $f\leq 0$ when $ (\eta,\xi,t)\in [0,1]\times\R\times[0,T^*).$ Letting $\varepsilon$ goes to $0,$ we get $w\le C(1-\eta)$.\smallskip

Next we prove $w\geq c_1(1-\eta)$. Fix any $t_0<T^*$ and  set
$$f=-e^{-\beta t}\phi \al-\varepsilon(1-\eta)^3+w,$$
where
$$\phi=e^{\frac{\pi}{2}\eta }\sin(\frac{\pi}{2}(1-\eta)),$$
and $\al, \beta$  are positive constants to be determined, and $\varepsilon$ is a sufficiently small positive constant. For $\eta\in[0,1],$ we have
$$1-\eta\leq \sin(\frac{\pi}{2}(1-\eta))\leq \frac{\pi}{2}(1-\eta),$$
which implies
$$\phi\leq e^{\frac{\pi}{2} }\frac{\pi}{2}(1-\eta)=C_2(1-\eta).$$
On $\{t=0\}\cup\{\eta=1\}\cup\{|\xi|=+\infty\},$ we have
$$f\geq \big(c-C_2\al-\varepsilon\big)(1-\eta)\geq0$$
by taking $\al$ small such that $c-C_2\al-\varepsilon\geq0.$
A direct calculation shows
\begin{align*}
    Lf&\leq \Big[-\beta\sin(\frac{\pi}{2}(1-\eta))+C^2(1-\eta)^2(\frac{\pi}{2})^2
    e^{\frac{\pi}{2} }2\cos(\frac{\pi}{2}(1-\eta))\Big]e^{-\beta t}\al+6w^2\varepsilon(\eta-1)\\
    &\leq \Big[-\beta(1-\eta)+C_1(1-\eta)^2\cos(\frac{\pi}{2}(1-\eta))\Big]e^{-\beta t}\al+6w^2\varepsilon(\eta-1)<0
\end{align*}
by taking $\beta$ large depending on $C_1,$ where we have used $w\leq C(1-\eta)$. Hence, the minimum of $f$ is not attained in $(0,1)\times\textbf{R}\times(0,t_0].$ On the other hand,
$$\partial_\eta f\mid_{\eta=0}\leq -e^{-\beta T^*}\frac{\pi}{2}\al+3\varepsilon<0.$$ Then the minimum of $f$ is not attained on $\{\eta=0\}.$ In summary, since $t_0$ is arbitrary, we have $f\ge 0$ when $ (\eta,\xi,t)\in [0,1]\times\textbf{R}\times[0,T^*).$ Letting $\varepsilon\to 0,$ we get
\begin{align*}
w\geq e^{-\beta t}\phi \al\geq e^{-\beta T^*}(1-\eta)\al.
\end{align*}

This completes the proof.
\end{proof}

\section{Global weighted gradient estimate}

\begin{proposition}\label{prop:grad}
Let $u$ be a smooth solution of \eqref{eq:Prandtl} in $[0,T^*)$ and satisfy the following conditions

\begin{align}\label{ph1}
c_0e^{-y}\le \pa_yu(t,x,y)\le C_0e^{-y}\quad \textrm{for }\,\,(t,x,y)\in [0,T^*)\times\R^2_+,
\end{align}
\begin{align}\label{ph2}
|\pa_y^2u(t,x,y)|\le C_0\quad \textrm{for }\,\,(t,x,y)\in [0,T^*)\times\R, \ y\in [0,\delta],
\end{align}
for some $0<c_0<C_0$ and $\delta>0$. Then it holds that
\beno
A(t)\le C_*\quad \text{for any}\,\,t\in \big[0,T^*\big),
\eeno
here $C_*$ is a constant depending  on $C_0, c_0, T^*, \delta$.
\end{proposition}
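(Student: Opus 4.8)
The plan is to quantify the bound $A(t)\le C_*$ by controlling, separately, each of the quantities appearing in the definition of $A(t)$: the derivatives $\pa_y^k(u-1)$ for $k=0,1,2$ and the weighted quantities $(1+y)^{-1}|\pa_xu|$ and $(1+y)^{-1}|\pa_x\pa_yu|$, over the two regions $\{0\le y\le\delta\}$ and $\{y\ge\delta\}$. In Crocco variables the unknown $w=\pa_yu$ solves the degenerate parabolic equation \eqref{eq:Pran-Cro}; by Proposition \ref{prop:mono} (whose hypothesis \eqref{condition} is exactly the translation of \eqref{ph1} on the parabolic boundary) we already have the two-sided bound $c_1(1-\eta)\le w\le C_0(1-\eta)$, which is the Crocco form of $c_1e^{-y}\le\pa_yu\le C_0e^{-y}$ together with a quantitative lower bound. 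So the bounds on $u-1=-(1-\eta)$ and $\pa_yu$ come essentially for free; what remains is to bound $\pa_y^2u$, $\pa_xu$ and $\pa_x\pa_yu$ with the stated exponential/polynomial weights.

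First I would treat the interior region $y\ge\delta$, equivalently $\eta$ bounded away from $1$. There the equation \eqref{eq:Pran-Cro} is uniformly parabolic (since $w\ge c_1(1-\eta)\ge c_1(1-\eta(\delta))>0$) and of kinetic/ultraparabolic type $-\pa_t w-\eta\pa_\xi w+w^2\pa_{\eta\eta}w=0$; applying the interior regularity theory for such equations cited in the introduction (\cite{WZL,GI,IM}) gives local Hölder and then, by bootstrapping, $C^{1}$ bounds on $w$ in $(\eta,\xi)$ on any compact $\eta$-subinterval of $[0,1)$, with constants depending only on $c_0,C_0,T^*$. Translating back through the Crocco map, $\pa_\eta w$ and $\pa_\xi w$ control $\pa_y^2u$ and $\pa_xu$ (one has $\pa_y^2u=w\pa_\eta w$, $\pa_x u=-w\,\pa_\xi w$ up to the chain-rule bookkeeping), and a further derivative gives $\pa_x\pa_yu$; the exponential decay in $y$ on this region follows from $w\sim e^{-y}$ and the exponential decay built into \eqref{ph1}. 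One must also handle the decay as $y\to\infty$, i.e.\ $\eta\to1$, but on $y\ge\delta$ this is controlled by the same comparison-function arguments used for the case $y\ge\delta$ in Theorem \ref{thm:main1}, so I would run a maximum-principle argument on $e^{y}\pa_xu$ etc.\ with the transport structure of \eqref{eq:Prandtl}.

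The delicate part, and the reason \eqref{ph2} is imposed, is the region near $y=0$, i.e.\ $\eta$ near $0$. Here the equation is still uniformly parabolic (by \eqref{ph1}, $w$ is bounded below near $\eta=0$), but $w=\pa_yu$ satisfies the \emph{Neumann} condition $\pa_\eta w|_{\eta=0}=0$, so interior parabolic estimates do not directly give boundary control of the full gradient, and one cannot differentiate the equation freely without losing the boundary condition. My plan is to obtain the boundary estimates by the maximum principle with carefully chosen auxiliary (barrier) functions, in the spirit of the proof of Proposition \ref{prop:mono}: to bound $\pa_xu$ (equivalently $\pa_\xi w$, which vanishes nowhere a priori but satisfies a linear parabolic equation obtained by differentiating \eqref{eq:Pran-Cro} in $\xi$, with a homogeneous Neumann condition at $\eta=0$ since $\pa_\eta\pa_\xi w|_{\eta=0}=\pa_\xi(\pa_\eta w|_{\eta=0})=0$), apply the comparison principle against barriers of the form $e^{\beta t}\phi(\eta)(1+|\xi|\text{-weight})$; and crucially, the hypothesis \eqref{ph2} supplies the missing bound on $\pa_y^2u=w\pa_\eta w$ near $\eta=0$ directly, closing the system. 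I expect the main obstacle to be precisely this coupling at the boundary: the Neumann condition prevents a clean iteration, so one must either set up a coupled barrier for the pair $(\pa_\xi w,\pa_\eta w)$ or use \eqref{ph2} to treat $\pa_\eta w$ as a known bounded coefficient and then run a single-unknown maximum principle for $\pa_\xi w$; getting the dependence of the barrier parameters $\alpha,\beta$ to close consistently, and matching the near-$y=0$ and $y\ge\delta$ estimates across $y=\delta$, is where the real work lies. Finally, collecting the bounds on all six quantities over both regions and taking the supremum over $[0,t]$ yields $A(t)\le C_*$ with $C_*=C_*(C_0,c_0,T^*,\delta)$.
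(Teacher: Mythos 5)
Your high-level plan (Crocco transform, split into regions, maximum principle with barriers, use \eqref{ph2} near the boundary) is pointing in the right direction, but there are two gaps serious enough that the argument as sketched would not close, and one factual confusion about the geometry.

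First, the geometry: you write that $y\ge\delta$ corresponds to ``$\eta$ bounded away from $1$'' and that $w\ge c_1(1-\eta)\ge c_1(1-\eta(\delta))>0$ gives uniform parabolicity there. This is backwards. Since $\eta=u$ and $u\to1$ as $y\to\infty$, the region $y\ge\delta$ is $\eta\ge u(\delta)$, i.e.\ $\eta$ bounded away from $0$; as $y\to\infty$ we have $\eta\to1$ and $w\sim c_1(1-\eta)\to0$, so the coefficient $w^2$ \emph{degenerates} there. The paper therefore works with three regions in $\eta$: an interior strip $[\varepsilon_+,1-\varepsilon_+]$, a degenerate boundary layer near $\eta=1$, and a Neumann boundary layer near $\eta=0$. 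You have collapsed the first two, and the one-sentence fix you offer (``run a maximum-principle argument on $e^y\pa_xu$ with the transport structure'') does not touch the real difficulty near $\eta=1$: one must prove $|\pa_\xi w|\le C(1-\eta)$, not merely $|\pa_\xi w|\le C$, and obtaining that extra factor of $(1-\eta)$ is the content of the rescaling $(\eta-\eta_0)/(1-\eta_0)$ together with the two comparison lemmas (Lemmas \ref{9.1} and \ref{thmxirf}) that propagate a H\"older and then Lipschitz modulus along the characteristic $\xi-t=\text{const}$. Nothing in your sketch produces that degenerating factor.

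Second, the near-$\eta=0$ argument as you describe it does not close. Differentiating \eqref{eq:Pran-Cro} in $\xi$ gives, for $V=\pa_\xi w$,
\[
-\pa_t V-\eta\pa_\xi V+w^2\pa_{\eta\eta}V+\bigl(2w\,\pa_{\eta\eta}w\bigr)V=0,
\]
and the zeroth-order coefficient is $2w\,\pa_{\eta\eta}w$, not $\pa_\eta w$. Hypothesis \eqref{ph2} bounds $\pa_\eta w$, not $\pa_{\eta\eta}w$; and from the equation itself $w^2\pa_{\eta\eta}w=\pa_t w+\eta\pa_\xi w$, so that coefficient is controlled only if one already controls $\pa_\xi w$ — precisely what one is trying to prove. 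A single-unknown comparison principle against a barrier is therefore circular. The paper avoids this by a Bernstein-type argument: it applies $L$ to $|\pa_\xi w|^2$ (and to the coupled combination $f$ in Lemma \ref{xi0e} that mixes $|\pa_\xi w|^2$, $|\pa_\eta w|^2$, fractional powers, the weight $\bar\eta^2$, and a cutoff). The squaring produces the coercive term $2w^2|\pa_{\xi\eta}^2w|^2$ (and likewise $2w^2|\pa_{\eta\eta}^2w|^2$ from the $|\pa_\eta w|^2$ block), and \emph{these} good terms, together with the preliminary rough bound $|\pa_\xi w|\le C\eta^{-3}$ from the scaling Lemma \ref{xi0el} and the assumed bound on $\pa_\eta w$, are what absorb the bad cross-term $-4w(\pa_\xi w)^2\pa_{\eta\eta}w$. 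Your alternative of a ``coupled barrier for $(\pa_\xi w,\pa_\eta w)$'' gestures toward this but does not identify that it is the quadratic structure (not a linear coupling) that supplies the needed positivity. Without that mechanism the argument has a genuine gap.

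Finally, a smaller point: the term $\bar\eta$ added to the auxiliary function in Lemma \ref{xi0e} is essential to kill the boundary case — it makes $\pa_{\bar\eta}f|_{\bar\eta=0}=1>0$ even though all the quadratic gradient terms have vanishing normal derivative there — and this is the concrete way the paper exploits the Neumann condition. Your proposal correctly notes $\pa_{\eta}\pa_\xi w|_{\eta=0}=0$ but does not supply the corresponding device that lets the interior maximum principle run.
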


\begin{remark}
For any $t\in [0,T], T<T^*$, $A(t)$ could be controlled by the energy
$\cE(t)+\int_0^t\cD(s)ds$ introduced in section 6. See Proposition 4.1 in \cite{CWZ}.
\end{remark}

We will use the Crocco transformation introduced in previous section.
By our assumption, there exist two constants depending on $c_0,C_0$ in \eqref{ph1}, which, with abuse of notation, we also denote as $c_0,C_0$, so that
\ben\label{ass:w}
c_0(1-\eta)\le w\le C_0(1-\eta).
\een
To prove the proposition, it suffices to show the following gradient estimates
\ben\label{eq:w-gradient}
|\pa_\eta w|\le C_*,\quad |\pa_\xi w|\le C_*(1-\eta)\quad \text{for}\,\,(\eta,\xi,t)\in (0,1)\times\R\times \big[\f {T^*} 4,T^*\big).
\een
Indeed, we have
\beno
|\pa_y^2u|\le |w\pa_\eta w|\le C_*e^{-y}.
\eeno
And using the formulas
\beno
{\pa_x u}=w\int_0^\eta\f {\pa_\xi w} {w^2}d\eta',\quad \pa_{xy}u=\pa_\eta w\pa_xu+\pa_\xi w,
\eeno
we infer that
\beno
|\pa_xu|+|\pa_{xy}u|\le C_*(1+y)e^{-y}.
\eeno

\subsection{Interior gradient estimate}

First of all, we make $C^\al$ estimate. For this, we need the following proposition from \cite{GI}(see also \cite{WZL}).

\begin{proposition}\label{prop:holder}
Let $f$ be a weak solution of
$$\partial_tf+v\partial_xf=\partial_v(A\partial_vf)+B\partial_vf+S\quad\text{in}\quad Q_{r_0}(z_0),
$$
where $z_0=(x_0,v_0,t_0)$ and
 \begin{align}\label{qr}
  Q_{r}(z_0)=\big\{(x,v,t):|x-x_0-(t-t_0)v_0|<r^3,|v-v_0|<r,t\in(t_0-r^2,t_0]\big\}.
\end{align}
Assume that $0<\lambda\leq A\leq\Lambda, |B|\leq\Lambda$ and $S$ is bounded. Then $f$ is $\alpha$-H\"{o}lder continuous with respect to $(x,v,t)$ in $Q_{r_1}(z_0), r_1<r_0$,
$$
||f||_{C^\alpha(Q_{r_1}(z_0))}\leq C\big(||f||_{L^2(Q_{r_0}(z_0))}+||S||_{L^\infty (Q_{r_0}(z_0))}\big)
$$
for some $\alpha=\alpha(\lambda,\Lambda)$ and $C=C(\lambda,\Lambda,Q_{r_0}(z_0),Q_{r_1}(z_0)).$
\end{proposition}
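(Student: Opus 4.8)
Since this statement is quoted from \cite{GI} (see also \cite{WZL}), I will only indicate the strategy, which is a De~Giorgi iteration adapted to the hypoelliptic operator $\mathcal L_0=\pa_t+v\pa_x-\pa_v^2$. First I would use the Galilean invariance of the equation: the affine change of variables $(x,v,t)\mapsto(x-x_0-(t-t_0)v_0,\ v-v_0,\ t-t_0)$ preserves the structural assumptions $\la\le A\le\La$, $|B|\le\La$, $S\in L^\infty$, and maps $Q_r(z_0)$ to the model cylinder $Q_r(0)$, so there is no loss in taking $z_0=0$. By the kinetic scaling $(x,v,t)\mapsto(r^3x,rv,r^2t)$ one may also normalize $r_0=1$, and by dividing $f$ by $\|f\|_{L^2(Q_1(0))}+\|S\|_{L^\infty(Q_1(0))}$ one may assume this quantity equals $1$; the bounded source is then carried along as an inhomogeneity of size $\le 1$.

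The first half is the local boundedness estimate (De~Giorgi's first lemma). For a level $\kappa\ge0$ and a cutoff $\chi$ localizing from $Q_r(0)$ to $Q_{r'}(0)$, $r'<r$, testing the equation against $\chi^2(f-\kappa)_+$ and integrating by parts in $v$ gives a Caccioppoli inequality
\[
\sup_{t}\int \chi^2(f-\kappa)_+^2\,dx\,dv+\la\iint \chi^2|\pa_v(f-\kappa)_+|^2\ \le\ \frac{C(\la,\La)}{(r-r')^{2}}\iint_{Q_r(0)}\big((f-\kappa)_+^2+S^2\,\mathbf{1}_{\{f>\kappa\}}\big),
\]
where the non-sign-definite transport term $v\pa_x$ is absorbed using the time-weight in $\chi$ together with the $x$-extent $r^3$ of the kinetic cylinder. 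This alone does not close an iteration, so I would combine it with the regularizing effect of $\mathcal L_0$: since $(f-\kappa)_+$, $\pa_v(f-\kappa)_+$ and $(\pa_t+v\pa_x)(f-\kappa)_+\in H^{-1}_v$ are all controlled, a velocity-averaging / hypoelliptic interpolation lemma yields a small fixed gain of space-time integrability, $\|\chi(f-\kappa)_+\|_{L^{2+\sigma}}\lesssim$ (right-hand side of the energy estimate). Running this on cylinders $Q_{r_k}(0)\downarrow Q_{r_1}(0)$ with levels $\kappa_k=M(1-2^{-k})$ produces a nonlinear recursion $U_{k+1}\le C^k U_k^{1+\sigma'}$ for $U_k=\iint_{Q_{r_k}(0)}(f-\kappa_k)_+^2$, which forces $U_\infty=0$ once $M\ge C(\la,\La)$. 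Applying the same to $-f$ gives $\|f\|_{L^\infty(Q_{r_1}(0))}\le C(\|f\|_{L^2(Q_{r_0}(0))}+\|S\|_{L^\infty})$.

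The second half upgrades boundedness to Hölder continuity by oscillation decay. Normalizing so that $0\le f\le1$ on a unit kinetic cylinder with $S$ small, I would prove a measure-to-pointwise (``intermediate value'') lemma: if $f\le1/2$ only on a subset of small measure of the lower part of the cylinder, then $f\ge1/4$ on a definite smaller upper subcylinder. Together with the energy estimate this yields the usual dichotomy --- at a fixed intermediate scale, either $\{f\le1/2\}$ or $\{f\ge1/2\}$ carries a definite fraction of the measure, hence either $\sup f$ drops or $\inf f$ rises by a fixed amount on a smaller cylinder --- so $\mathrm{osc}_{Q_{\theta r}(0)}f\le(1-\tau)\,\mathrm{osc}_{Q_r(0)}f$ with $\theta,\tau\in(0,1)$ depending only on $\la,\La$. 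Iterating over dyadic scales (legitimate by the scaling and Galilean invariances already used) gives the $\al$-Hölder bound on $Q_{r_1}(z_0)$ with $\al=\al(\la,\La)$ and $C=C(\la,\La,Q_{r_0}(z_0),Q_{r_1}(z_0))$.

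I expect the genuine obstacle to be the measure-to-pointwise lemma in the kinetic geometry. Unlike the uniformly parabolic case there is no Poincaré inequality bounding $f$ in all variables by its $v$-gradient alone, so one must use the transport term to propagate information in $x$ and $t$; this forces working with cylinders enlarged in the $x$-direction (the $r^3$ scaling) and a careful covering/chaining argument, and making all constants depend only on $\la,\La$ is the delicate core of \cite{GI,WZL}. A secondary technical point, needed because $f$ is only a weak solution with rough $A,B$, is to justify the energy computations by Steklov-type averaging in $t$ along the characteristics $x-tv=\mathrm{const}$.
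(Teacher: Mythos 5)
The paper does not give a proof of Proposition~\ref{prop:holder}; it is imported verbatim from \cite{GI} (see also \cite{WZL}). Your sketch faithfully outlines the De~Giorgi iteration strategy used in those references --- Caccioppoli plus a hypoelliptic gain of integrability for local boundedness, then an intermediate-value lemma for oscillation decay, with Galilean and kinetic-scaling invariance to pass between scales --- so there is nothing in the paper itself to compare against beyond the citation.
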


We consider the regularity of solution  in the domain $[\varepsilon_+,1-\varepsilon_+]\times  \R\times \big[\frac{T^*}{4},T^*\big),$ where $\varepsilon_+$ is a small positive constant. For any $(\eta_0,\xi_0,t_0)\in  [\varepsilon_+,1-\varepsilon_+]\times  \R\times [\frac{T^*}{4},T^*),$ we make the change of variable
\beno
(\bar{\eta},\bar{\xi},\bar{t})=\big(\eta-\eta_0,\xi-\xi_0-\eta_0(t-t_0),t-t_0\big),
\eeno
 and introduce the domain
 \begin{align*}
 \mathcal{ \overline{R}}^0=&\Big\{(\bar{\eta},\bar{\xi},\bar{t})\in\big(-\frac{\varepsilon_+}{100},\frac{\varepsilon_+}{100}\big)\times  (-1,1)\times \big(-\frac{T^*}{100},0\big]\Big\}\\
 =&\Big\{(\eta,\xi,t)\in\big(\eta_0-\frac{\varepsilon_+}{100},\eta_0+\frac{\varepsilon_+}{100}\big)\times  (-1+\xi_0+\eta_0(t-t_0),1+\xi_0+\eta_0(t-t_0))\times \big(t_0-\frac{T^*}{100},t_0\big]\Big\}\\
\subseteq&\big\{(\eta,\xi,t)\in(0,1)\times\R\times (0,T^*) \big\}.
\end{align*}
The equation \eqref{eq:Pran-Cro} is invariant under this transformation, i.e.,
\begin{align}\label{main2dP}
  \partial_{\bar{t}}  w^{-1}+\bar{\eta} \partial_{\bar{\xi}} w^{-1}+\partial_{\bar{\eta}\bar{\eta}}^2 w=0.
\end{align}
Thanks to \eqref{ass:w}, there exist two positive constants $a_1,a_2$ so that
\begin{align}\label{star}
  a_1\leq w\leq a_2\quad\text{for}\ \ (\bar{\eta},\bar{\xi},\bar{t})\in \mathcal{ \overline{R}}^0.
\end{align}
Then for the equation
\begin{align}\label{m12dP}
\partial_{\bar{t}}  w^{-1}+\bar{\eta} \partial_{\bar{\xi}} w^{-1}-\partial_{\bar{\eta}} (w^2\partial_{\bar{\eta}} w^{-1})=0,
  \end{align}
 we use Proposition \ref{prop:holder} in $\mathcal{ \overline{R}}^0$ to obtain a uniform $C^\alpha$ estimate in a smaller domain
 \begin{align*}
 \mathcal{ \overline{R}}^1=\Big\{(\bar{\eta},\bar{\xi},\bar{t})\in\big(-\frac{\varepsilon_+}{200},\frac{\varepsilon_+}{200}\big)\times \big (-\frac{1}{2},\frac{1}{2}\big)\times \big(-\frac{T^*}{200},0\big]\Big\}.
 \end{align*}
Here $C^\alpha$ estimate is independent of choice of $(\eta_0,\xi_0,t_0)$ and only depends on $a_1,a_2,T^*.$\smallskip

 To obtain the gradient estimate, we need to use the localized Schauder estimate from \cite{IM}. We introduce the hypoelliptic H\"older norm $\mathcal{H}^\alpha$ defined by
$$
\|g\|_{\mathcal{H}^\alpha(\mathcal{Q})}
:=\sup_{\mathcal{Q}}|g|+\sup_{\mathcal{Q}}|(\partial_{t}+v\partial_{x})g|+\sup_{\mathcal{Q}}|D_{v}^2g|+[(\partial_{t}+v\partial_{x})g]_{\mathcal{C}^{0,\alpha}(\mathcal{Q})}+[D_{v}^2g]_{\mathcal{C}^{0,\alpha}(\mathcal{Q})},
$$
where $\mathcal{Q}$ denotes a open connected set and $[\cdot]_{\mathcal{C}^{0,\alpha}(\mathcal{Q})}$ denotes the H\"older anisotropic semi- norm (see Definition 2.3 in \cite{IM}), which implies H\"older regularity in usual sense but with lower regularity exponent.

\begin{proposition}\label{localsch}
Given $\alpha\in(0,1)$ and $g\in \mathcal{C}^{0,\alpha}(\R^{2d+1})$ and $a^{i,j},b^i,c\in \mathcal{C}^{0,\alpha}(\R^{2d+1})$ satisfying
\begin{align}\label{(32)}
a^{i,j}(t,x,v)\xi_i\xi_j\geq \lambda |\xi|^2,\quad (t,x,v)\in(0,+\infty)\times \R^{2d},
\end{align}
for some constant $\lambda>0$, it holds that
$$
\|g\|_{\mathcal{H}^\alpha(\mathcal{Q}_1(z_0))}\leq C\|\mathcal{L}g+g\|_{\mathcal{C}^{0,\alpha}(\mathcal{Q}_2(z_0))}
+C\|g\|_{L^\infty(\mathcal{Q}_2(z_0))},
$$
where $z_0=(t_0,x_0,v_0)$ and
$$
\mathcal{Q}_r(z_0)=\big\{(t,x,v):t_0-r^2<t\leq t_0,|x-x_0-(t-t_0)v_0|<r^3,|v-v_0|<r\big\}$$ and $$\mathcal{L}:=\partial_{t}+v\nabla_{x}-a^{i,j}\partial_{v_iv_j}^2-b^i\partial_{v_i}-c,
$$
and $C$ depends on $d,\lambda,\alpha$ and $\|a\|_{\mathcal{C}^{0,\alpha}},
\| b\|_{\mathcal{C}^{0,\alpha}},\|c\|_{\mathcal{C}^{0,\alpha}}.$
\end{proposition}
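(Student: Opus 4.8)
This is a Schauder estimate for Kolmogorov--Fokker--Planck (kinetic) operators; the plan is to prove it by the classical Campanato / a~priori scheme adapted to the kinetic scaling (as in \cite{IM}). One first reduces to an a~priori bound for smooth $g$ (the general case following by an approximation argument). By covering $\mathcal{Q}_1(z_0)$ with finitely many small kinetic cylinders $\mathcal{Q}_r(z)\subset\mathcal{Q}_2(z_0)$, and by interpolation between $L^\infty$ and $\mathcal{C}^{0,\alpha}$ in the anisotropic scale to absorb intermediate quantities, it suffices to bound, on each such cylinder, the $\mathcal{H}^\alpha$ quantity on $\mathcal{Q}_{r/2}(z)$ by $\|\mathcal{L}g+g\|_{\mathcal{C}^{0,\alpha}(\mathcal{Q}_2(z_0))}$ and $\|g\|_{L^\infty(\mathcal{Q}_2(z_0))}$. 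Freezing the coefficients at the center $z$, one introduces the constant-coefficient model operator
\[
\mathcal{L}_0:=\partial_t+v\nabla_x-a^{i,j}(z)\,\partial_{v_iv_j}^2,
\]
which, by the ellipticity \eqref{(32)} and a linear change of variables in $v$ reducing $a^{i,j}(z)$ to the identity, is the classical hypoelliptic Kolmogorov operator and has an explicit Gaussian-type fundamental solution --- the source of all quantitative information about the model.

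Second, from the fundamental solution one derives the interior estimates for the model: if $\mathcal{L}_0h=0$ on $\mathcal{Q}_r(z)$ then $h$ is smooth and all its kinetic derivatives on $\mathcal{Q}_{r/2}(z)$ are controlled by $\|h\|_{L^\infty(\mathcal{Q}_r(z))}$ with scale-correct powers of $r$. In particular one obtains the oscillation-decay (Campanato) estimate: for $0<\rho\le r$,
\[
\inf_{P}\big\|(\partial_t+v\nabla_x)h-P\big\|_{L^\infty(\mathcal{Q}_\rho(z))}+\inf_{c}\big\|D_v^2h-c\big\|_{L^\infty(\mathcal{Q}_\rho(z))}\le C\,(\rho/r)^{\alpha}\,\mathcal{N}_r(h),
\]
where $c$ runs over constant matrices, $P$ over the affine functions compatible with the kinetic geometry, and $\mathcal{N}_r(h)$ is the natural scale-invariant energy of $h$ on $\mathcal{Q}_r(z)$. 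One also needs, for the inhomogeneous model problem $\mathcal{L}_0\phi=f$ with vanishing data on the appropriate part of $\partial\mathcal{Q}_r(z)$, a bound for $\phi$ and its controlled derivatives by the $\mathcal{C}^{0,\alpha}$-size of $f$, which again follows from the kernel together with energy estimates.

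Third, run the iteration: on $\mathcal{Q}_r(z)$ write $g=h+\phi$ with $\mathcal{L}_0h=0$ and $h=g$ on the boundary, so that
\[
\mathcal{L}_0\phi=\big(a^{i,j}-a^{i,j}(z)\big)\partial_{v_iv_j}^2g+b^i\partial_{v_i}g+c\,g+\mathcal{L}g
\]
with $\phi$ vanishing on the boundary; the first- and zeroth-order terms in $g$, together with $\|\mathcal{L}g\|_{\mathcal{C}^{0,\alpha}}\le\|\mathcal{L}g+g\|_{\mathcal{C}^{0,\alpha}}+\|g\|_{\mathcal{C}^{0,\alpha}}$, are lower order and are absorbed on the right by interpolation. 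Since $\|(a^{i,j}-a^{i,j}(z))\partial_{v_iv_j}^2g\|_{L^\infty(\mathcal{Q}_r(z))}\le[a]_{\mathcal{C}^{0,\alpha}}\,r^\alpha\,\|D_v^2g\|_{L^\infty(\mathcal{Q}_r(z))}$ --- the factor $r^\alpha$ eventually turning this into an arbitrarily small fraction of the quantity being estimated --- combining the model decay for $h$ with the control of $\phi$ gives, for a fixed $\theta\in(0,1)$, the recursion
\[
E(z,\theta r)\le\tfrac12\,\theta^\alpha E(z,r)+C\,r^\alpha\big(\|\mathcal{L}g+g\|_{\mathcal{C}^{0,\alpha}(\mathcal{Q}_2(z_0))}+\|g\|_{L^\infty(\mathcal{Q}_2(z_0))}\big),
\]
where $E(z,r)$ is the left side of the previous display with $h$ replaced by $g$. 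Iterating yields $E(z,r)\le C\,r^\alpha(\cdots)$ uniformly in $z$, which by the Campanato characterization of the anisotropic H\"older seminorms controls $[(\partial_t+v\nabla_x)g]_{\mathcal{C}^{0,\alpha}}$ and $[D_v^2g]_{\mathcal{C}^{0,\alpha}}$; adding the $L^\infty$ bounds on the same quantities (one application of the model estimate) gives the full $\mathcal{H}^\alpha$ bound.

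The main obstacle is the transport term $v\nabla_x$: it cannot be frozen, since replacing $v\nabla_x$ by $v_0\nabla_x$ leaves the error $(v-v_0)\nabla_x g$, which is not small in any naive norm --- and $\nabla_x g$ is not even among the quantities appearing in $\mathcal{H}^\alpha$. The resolution is to keep $v\nabla_x$ exactly and work throughout with the non-Euclidean kinetic geometry: the Galilean invariance of $\mathcal{L}_0$, the anisotropic dilations $(t,x,v)\mapsto(r^2t,r^3x,rv)$ under which $\mathcal{Q}_r$ is a genuine ball, and the relation $|x-x_0-(t-t_0)v_0|\lesssim r^3$ versus $|v-v_0|\lesssim r$ on $\mathcal{Q}_r(z_0)$, so that $(v-v_0)\nabla_x$ scales like $r\cdot r^{-3}=r^{-2}$, exactly like $\partial_t$ and $D_v^2$, and is carried by the geometry rather than treated as a perturbation. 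Making this rigorous --- the model interior estimates, the Gaussian kernel bounds, and the correct notion of kinetic boundary for the Dirichlet problem, all in the anisotropic framework --- is the technical core.
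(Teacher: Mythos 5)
The paper does not prove Proposition \ref{localsch} at all: it is imported as a black box from Imbert--Mouhot \cite{IM} (the sentence immediately preceding it reads ``we need to use the localized Schauder estimate from \cite{IM}''). So there is no in-paper proof for your attempt to be measured against; the ``proof'' in the paper is the citation itself.

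That said, your sketch is a reasonable, broadly faithful outline of the Campanato/freezing approach that underlies kinetic Schauder theory (and is the approach of \cite{IM}). If you intend it to stand as a proof, the main gaps to close are the following. First, the step ``write $g=h+\phi$ with $\mathcal{L}_0h=0$ and $h=g$ on the boundary'' presupposes solvability and boundary regularity of a Dirichlet problem for the Kolmogorov operator on the kinetic cylinder $\mathcal{Q}_r$; this is not trivial --- the relevant data live only on the hypoelliptic (``parabolic'') part of the boundary, and regularity up to the lateral $x$-faces is delicate. In \cite{IM} and related works one typically avoids this by a cutoff-and-representation argument using the explicit Kolmogorov fundamental solution (i.e.\ global estimates for the model operator applied to $\chi g$) rather than a local Dirichlet split. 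Second, ``by the Campanato characterization of the anisotropic H\"older seminorms'' is doing a lot of work: one must specify the polynomial class used to measure oscillation decay (the transport-compatible affine functions, not arbitrary affine functions of $(t,x,v)$) and verify the equivalence of the resulting Campanato-type seminorm with the $\mathcal{H}^\alpha$ norm defined in the paper; this is one of the nontrivial ingredients in \cite{IM}. Third, your interpolation step absorbing $b^i\partial_{v_i}g$, $cg$, and in particular $\|g\|_{\mathcal{C}^{0,\alpha}}$ from the rewriting $\|\mathcal{L}g\|\le\|\mathcal{L}g+g\|+\|g\|_{\mathcal{C}^{0,\alpha}}$ needs an explicit anisotropic interpolation inequality between $L^\infty$ and $\mathcal{H}^\alpha$ in the kinetic scale; this should be stated and proved (or cited) separately, as the naive Euclidean interpolation does not apply. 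Finally, the Galilean-invariance/scaling remarks at the end of your sketch are correct and important, but they need to be wired explicitly into the definitions of $\mathcal{Q}_r$, the dilations, and the polynomial class before the iteration makes sense.
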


By the definition of $\|g\|_{\mathcal{H}^\alpha(\mathcal{Q})},$
$\|g\|_{\mathcal{L}^\infty(\mathcal{Q})}$ is controlled by $\|g\|_{\mathcal{H}^\alpha(\mathcal{Q})}$, and thus we can write $(2.18)$ in Lemma 2.12 \cite{IM} as follows
\ben\label{eq:holder-inter}
\|D_{v}g\|_{\mathcal{C}^{0,\alpha}(\mathcal{Q})}\leq C\|g\|_{\mathcal{H}^\alpha(\mathcal{Q})}.
\een

We rewrite \eqref{main2dP} in its equivalent form
\begin{align}\label{regint}
\partial_{\bar{t}} w+\bar{\eta} \partial_{\bar{\xi}} w-w^2\partial^2_{\bar{\eta}\bar{\eta}} w=0,
\end{align}
Let $\mathcal{L}:=\partial_{\bar{t}}+\bar{\eta} \partial_{\bar{\xi}} -w^2\partial^2_{\bar{\eta}\bar{\eta}} $
with $a^{\bar{\eta} \bar{\eta} }=w^2$ satisfying \eqref{(32)} {due to \eqref{star}}.
Then by the interior $C^\alpha$ estimate obtained above, Proposition \ref{localsch} and \eqref{eq:holder-inter}, in a smaller interior domain
$$\mathcal{\overline{R}}^2=\Big\{(\bar{\eta},\bar{\xi},\bar{t})\in\big(-\frac{\varepsilon_+}{400},\frac{\varepsilon_+}{400}\big)\times  \big(-\frac{1}{4},\frac{1}{4}\big)\times \big(-\frac{T^*}{400},0\big]\Big\},
$$
we have
\begin{align*}
 \|\partial_{\bar{\eta}}w\|_{\mathcal{C}^{0,\alpha}(\mathcal{\overline{R}}^2)}+\|w\|_{\mathcal{H}^\alpha(\mathcal{\overline{R}}^2)}\leq C,
\end{align*}
where $C$ is a constant depending only on $a_1,a_2,T^*,\varepsilon_+.$
In particular,
\begin{align}\label{hypohint}
 \|\partial_{\bar{\eta}}w\|_{\mathcal{C}^{0,\alpha}(\mathcal{\overline{R}}^2)}+\| \partial_{\bar{\eta}\bar{\eta}}^2w\|_{\mathcal{C}^{0,\alpha}(\mathcal{\overline{R}}^2)}\leq C.
 \end{align}

 Now we continue to obtain a bound for $\partial_{\bar {\xi}} w$. The method is inspired by Proposition 3.3 in \cite{IM}. While in our case, the equation is quasi-linear, i.e., $a^{\bar{\eta}\bar{\eta}}$ is not a constant but $a^{\bar{\eta}\bar{\eta}}=w^2$. $a^{\bar{\eta}\bar{\eta}}\geq a_1^2$ is a key point in the following proof. We will show that
\begin{align}\label{eq:w-dxi}
 \|\partial_{\bar{\xi}}w\|_{L^\infty(\mathcal{\overline{R}}^3)}\leq C\quad
 \text{in}\,\,\mathcal{\overline{R}}^3,
 \end{align}
where $\mathcal{\overline{R}}^3=\Big\{(\bar{\eta},\bar{\xi},\bar{t})\in(-\frac{\varepsilon_+}{800},\frac{\varepsilon_+}{800})\times  (-\frac{1}{8},\frac{1}{8})\times (-\frac{T^*}{800},0]\Big\}$, and $C$ is a constant depending only on $a_1,a_2,T^*,\varepsilon_+.$\smallskip

We use Bernstein's method to prove \eqref{eq:w-dxi}. Take $0\leq\zeta\in C^\infty$ as a cut-off function supported in $\mathcal{\overline{R}}^2$ and $\zeta=1$ in $\mathcal{\overline{R}}^3$. Let
 $
 L:=-\partial_{\bar{t}}-\bar{\eta} \partial_{\bar{\xi}} +w^2\partial^2_{\bar{\eta}\bar{\eta}}
 $
 and introduce
 $$g=A_1\partial_{\bar{\eta}}w\partial_{\bar{\xi}}w\zeta^9+|\partial_{\bar{\xi}}w|^2\zeta^{12}-A_3\bar{t},
 $$
 with $A_1,A_3$ to be determined later. The choice of powers of $\zeta$ is delicate in order to kill out some terms. By taking $A_1,A_3$ properly, we will prove that $Lg>0$ so that by maximum principle,
 \begin{align}\label{maxg}
 \sup_{\mathcal{\overline{R}}^2} g=\sup_{\partial_p\mathcal{\overline{R}}^2}g.
\end{align}
By a direct calculation, we have
\begin{align*}
 L\partial_{\bar{\xi}}  w&=\partial_{\bar{\xi}} L w-2w\partial_{\bar{\xi}}w\partial^2_{\bar{\eta}\bar{\eta}}w=P_2\partial_{\bar{\xi}}w,\\
 L\partial_{\bar{\eta}}  w&=\partial_{\bar{\eta}} L w+\partial_{\bar{\xi}}w-2w\partial_{\bar{\eta}}w\partial^2_{\bar{\eta}\bar{\eta}}w=P_3+\partial_{\bar{\xi}}w,
 \end{align*}
where we use $P_2,P_3$ to stand for some functions bounded by $C$ through \eqref{hypohint}. Then we get
\begin{align}\begin{split}
  L(\partial_{\bar{\eta}}w\partial_{\bar{\xi}}w \zeta^9)= &
   \zeta^9P_3\partial_{\bar{\xi}}w+ \zeta^9|\partial_{\bar{\xi}}w|^2
   +\zeta^9P_2\partial_{\bar{\xi}}w\partial_{\bar{\eta}}  w\\
   &+L\zeta^9\partial_{\bar{\eta}}w\partial_{\bar{\xi}}w
   +w^2\big(2(\partial^2_{\bar{\eta}\bar{\eta}}w\partial_{\bar{\xi}\bar{\eta}}w) \zeta^9
   +
   2 \partial_{\bar{\eta}}(\partial_{\bar{\eta}}w\partial_{\bar{\xi}}w  ) \partial_{\bar{\eta}}\zeta^9\big)\\
   \geq & \frac{1}{2}\zeta^9|\partial_{\bar{\xi}}w|^2-\epsilon|\partial^2_{\bar{\xi}\bar{\eta}}w|^2 \zeta^{16}-C-\frac{C}{\epsilon},\\
 L( |\partial_{\bar{\xi}}w|^2\zeta^{12})=&w^2\big(2|\partial_{\bar{\xi}\bar{\eta}}^2w|^2\zeta^{12}+4\partial_{\bar{\xi}}w\partial_{\bar{\eta}}\zeta^{12}\partial_{\bar{\xi}\bar{\eta}}^2w
\big)\\&+2\zeta^{12}P_2|\partial_{\bar{\xi}}w|^2
 +|\partial_{\bar{\xi}}w|^2L\zeta^{12}
 \\\geq & \lambda|\partial_{\bar{\xi}\bar{\eta}}^2w|^2\zeta^{12}-C|\partial_{\bar{\xi}}w|^2\zeta^{10}
 -C,
 \end{split}
\end{align}
for some $\lambda>0$ depending only on $a_1$, where we used Young's inequality in the first inequality and \eqref{hypohint}  in both inequalities. Thus, we conclude that
\begin{align}\begin{split}
  Lg\geq& A_1\frac{1}{2}\zeta^9|\partial_{\bar{\xi}}w|^2-\epsilon A_1|\partial^2_{\bar{\xi}\bar{\eta}}w| \zeta^{16}-C A_1\big(1+\frac{1}{\epsilon}\big)\\
 &+ \lambda|\partial_{\bar{\xi}\bar{\eta}}w|^2\zeta^{12}- C|\partial_{\bar{\xi}}w|^2\zeta^{10}+A_3.
 \end{split}
\end{align}
Next take $A_1$ large enough depending on $C,\zeta$. Then take $\epsilon$ small enough so that $\epsilon<\frac{\lambda}{2A_1}$. Finally, taking $A_3$ large enough depending on $A_1,\epsilon,C,$ we obtain $Lg>0.$ Note that $A_1,A_3$ depend only on $\mathcal{\overline{R}}^2,\mathcal{\overline{R}}^3,T^*,a_1,a_2,\varepsilon_+$. Thanks to
$$
\partial_{\bar{\eta}}w\partial_{\bar{\xi}}w \leq C_{\varepsilon }|\partial_{\bar{\eta}}w|^2+\varepsilon |\partial_{\bar{\xi}}w|^2,
$$ where $\varepsilon $ is any small positive constant, by \eqref{hypohint} and \eqref{maxg}, in $\mathcal{\overline{R}}^2,$
$$
 -C-A_3\bar{t}+\frac{1}{2}|\partial_{\bar{\xi}}w|^2\zeta^{12}\leq g\leq C.
$$
This completes the proof.
\smallskip

By \eqref{hypohint} and \eqref{eq:w-dxi}, especially at $(\bar{\eta},\bar{\xi},\bar{t})=(0,0,0),$ we have $|{\nabla}_{\bar \xi,\bar\eta} w|(0,0,0)\leq C_{a_1,a_2,T^*,\varepsilon_+}.$
Back to the original coordinate $(\eta,\xi,t)$, we have $|\nabla_{\xi,\eta} w|(\eta_0,\xi_0,t_0)\leq C_{a_1,a_2,T^*,\varepsilon_+}.$ Since the bound is independent of choice of $(\eta_0,\xi_0,t_0)$ and $(\eta_0,\xi_0,t_0)$ is arbitrary, we obtain
\begin{align}\label{eq:w-gradient-i}
|\nabla_{\xi,\eta} w| _{L^\infty([\varepsilon_+,1-\varepsilon_+]\times  \R\times [\frac{T^*}{4},T^*))}\leq C_{a_1,a_2,T^*,\varepsilon_+}.
\end{align}

\subsection{Gradient estimate near $\eta=1$}
Now we make the boundary gradient estimate for $w$ in the domain $\big\{(x,y,\tau) \in \R^2_+\times [\frac{2T^*}{3},T^*)|1-\varepsilon_+\leq u\big\},$ or equivalently $\big\{(\eta,\xi,t)\in[1-\varepsilon_+,1]\times  \R\times [\frac{2T^*}{3},T^*)\big\}$, where $\varepsilon_+$ is chosen so that
$1-5\varepsilon_+>\frac{1}{2}$. Set
\beno
&&D_1=\Big\{(\eta,\xi,t)\in[1-4\varepsilon_+,1]\times  \R\times [\frac{T^*}{5},T^*)\Big\},\\
 &&D_2=\Big\{(\eta,\xi,t)\in[1-3\varepsilon_+,1]\times  \R\times [\frac{T^*}{4},T^*)\Big\}.
\eeno
 For any $x_0=(\eta_0,\xi_0,t_0)\in D_2,$ we define a reversible transform by
\begin{align}\label{Ucons}
    (\overline{\eta},\overline{\xi},\overline{t})=\Big(\frac{\eta
    -\eta_0}{ 1-\eta_0},\frac{\xi-\xi_0-\eta_0(t-t_0)}{ 1-\eta_0},
   t-t_0\Big),
\end{align}
and introduce a function
$$h=w^{-1}(1-\eta_0).$$
Then we have
\begin{align}\label{bart}
    \partial_\eta=\frac{1}{ 1-\eta_0}\partial_{\bar{\eta}},\,\,
   \partial_\xi=\frac{1}{ 1-\eta_0}\partial_{\bar{\xi}} ,\,\,
   \partial_t=-\frac{\eta_0}{ 1-\eta_0}\partial_{\bar{\xi}} +\partial_{\bar{t}}, \,\, \partial_{\bar{t}}=\partial_{t}+\eta_0\partial_{\xi}.
\end{align}
Under the coordinates $(\overline{\eta},\overline{\xi},\overline{t}),$ $x_0=(0,0,0)$ and the equation \eqref{eq:Pran-Cro} takes
\begin{align}\label{1}
\partial_{\bar{t}}  h+\bar{\eta} \partial_{\bar{\xi} } h-\partial_{\bar{\eta}}\Big(\frac{w^2}{(1-\eta_0)^2}\partial_{\bar{\eta}} h\Big)=0,
\end{align}
or equivalently,
\begin{align}\label{inf2}
\partial_{\bar{t}}   \frac{w}{1-\eta_0}+\bar{\eta} \partial_{\bar{\xi} } \frac{w}{1-\eta_0}-\frac{w^2}{(1-\eta_0)^2}\partial^2_{\bar{\eta}\bar{\eta}} \frac{w}{1-\eta_0}=0.
\end{align}

As before, we can choose a small enough constant $r_0$ depending on $T^*,$ but independent of $x_0$ such that
$$\big\{(\bar{\eta},\bar{\xi},\bar{t})\in {Q}_{r_0}\big\}\subset D_1,$$
where ${Q}_r$ is a cube defined in \eqref{qr} with $z_0=0$. By \eqref{ass:w}, $0<\lambda\leq\frac{w^2}{(1-\eta_0)^2},h\leq \Lambda$ in $Q_{r_0}$, where $\lambda,\Lambda$ are independent of $\eta_0$ and determined by $c_0,C_0$ in \eqref{ass:w}. Then by Proposition \ref{prop:holder}, we have
$$
 \|h\|_{C_{\bar{\eta},\bar{\xi},\bar{t}}^\alpha({Q}_{\frac{r_0}{2}})} \leq C_{\lambda,\Lambda,r_0}.
$$
This implies that
$$
  \Big\|\frac{w}{1-\eta_0}\Big\|_{C_{\bar{\eta},\bar{\xi},\bar{t}}^\alpha({Q}_{\frac{r_0}{2}})} \leq C_{\lambda,\Lambda,r_0},
$$
by noting that for any $x,y\in {Q}_{r_0},$
$$
 \Big|\frac{w}{1-\eta_0}(x)- \frac{w}{1-\eta_0}(y)\Big|=\Big| (h(x)-h(y) ) \frac{w(x)w(y)}{(1-\eta_0)^2}\Big|\leq C|h(x)-h(y)|.
$$

Next we consider $\frac{w}{1-\eta_0}$ as a whole in \eqref{inf2}. By the interior estimates similar as in subsection 5.1, we have
\begin{align}\label{eta1est}
\begin{split}
  &\big\|\partial_{\bar{\eta}} \frac{w}{1-\eta_0}\big\|_{L^\infty({Q}_{\frac{r_0}{4}})}\leq C,\ \big\|\partial_{\bar{t}} \frac{w}{1-\eta_0}\big\|_{L^\infty({Q}_{\frac{r_0}{4}})}+\big\|\partial_{\bar{\xi}} \frac{w}{1-\eta_0}\|_{L^\infty({Q}_{\frac{r_0}{4}})}
   \leq C.\end{split}
\end{align}
Transforming back to the original coordinate $(\eta,\xi,t)$, we get by \eqref{bart}  that
\begin{align*}
& |\partial_{t}+\eta_0\partial_{\xi}w(x_0)|\leq (1-\eta_0)C_{\lambda,\Lambda,r_0},\\
&|\partial_{\eta}w(x_0)|+|\partial_{\xi}w(x_0)|+|\partial_{t}w(x_0)| \leq C_{\lambda,\Lambda,r_0}.
\end{align*}
Since the choice of $x_0$ is arbitrary and $C_{\lambda,\Lambda,r_0}$ is independent of $x_0$, we have
\begin{align}\label{1drvgr}
 &\|\partial_{\eta}w\|_{L^\infty(D_2)}+\|\partial_{\xi}w\|_{L^\infty(D_2)}+\|\partial_{t}w\|_{L^\infty(D_2)}\leq C_{\lambda,\Lambda,T^*},\\
&\|\partial_{t}w+\eta\partial_{\xi }w\|_{L^\infty(D_2)}\leq C_{\lambda,\Lambda,T^*}(1-\eta).
\end{align}
In particular, by \eqref{1drvgr},
\begin{align}\label{L}
\|\partial_{t}w+\partial_{\xi }w\|_{L^\infty(D_2)}\leq C_{\lambda,\Lambda,T^*}(1-\eta).
\end{align}

To prove \eqref{eq:w-gradient}, we need to refine the estimate for $\partial_{\xi}w$, i.e.,
\begin{align}
\|\partial_{\xi}w\|_{L^\infty([1-2\varepsilon_+,1]\times\R\times [\frac{2T^*}{3},T^*))}\leq C_{\lambda,\Lambda,T^*}(1-\eta),
\end{align}
which will be proved by the following two lemmas.

\begin{lemma}\label{9.1}
For any $(\eta_0,\xi_0,t_0)\in [1-2\varepsilon_+,1]\times  \R\times [\frac{T^*}{2},T^*),$ we have
\begin{align}\label{12}
    \Big|\frac{w(\eta_0,\xi,t_0)-w(\eta_0,\xi_0,t_0)}{1-\eta_0}\Big|\leq C(\xi-\xi_0)^{\frac{1}{2}},\,\,\,\xi\in\big[\xi_0,\xi_0+\frac{(1-\eta_0)^2}{2}\big].
\end{align}
 where $C$ is independent of the choice of $(\eta_0,\xi_0,t_0)$ and depends only on $T^*, \lambda, \Lambda$.
\end{lemma}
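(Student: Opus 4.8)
The plan is to read off \eqref{12} from the interior gradient bound \eqref{eta1est} for the rescaled equation, the key observation being that the admissible window $\xi-\xi_0\le\tfrac12(1-\eta_0)^2$ is precisely what turns a Lipschitz bound in the rescaled horizontal variable into a one-half Hölder bound in $\xi$. We may assume $\eta_0<1$, since otherwise the left-hand side of \eqref{12} is vacuous. First I would fix $x_0=(\eta_0,\xi_0,t_0)$, apply the reversible change of variables \eqref{Ucons} centered at $x_0$, and set $W:=\dfrac{w}{1-\eta_0}$, which in the coordinates $(\bar\eta,\bar\xi,\bar t)$ solves the kinetic-type equation \eqref{inf2} with diffusion coefficient $\dfrac{w^2}{(1-\eta_0)^2}\in[\lambda,\Lambda]$ by \eqref{ass:w}. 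As already established in \eqref{eta1est} — obtained by invoking Proposition \ref{prop:holder} in $Q_{r_0}$ and then the localized Schauder estimate of Proposition \ref{localsch} together with \eqref{eq:holder-inter} — one has $\big\|\partial_{\bar\xi}W\big\|_{L^\infty(Q_{r_0/4})}\le C$ with $C=C(\lambda,\Lambda,T^*)$ and $r_0=r_0(T^*)$.

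Next I would freeze $\bar t=0$ and $\bar\eta=0$, that is $t=t_0$ and $\eta=\eta_0$; there \eqref{Ucons} gives $\bar\xi=\dfrac{\xi-\xi_0}{1-\eta_0}$, so for $\xi\in\big[\xi_0,\,\xi_0+\tfrac12(1-\eta_0)^2\big]$ one has $0\le\bar\xi\le\tfrac12(1-\eta_0)\le\varepsilon_+$. Imposing, in addition to $1-5\varepsilon_+>\tfrac12$, the mild smallness $\varepsilon_+<(r_0/4)^3$ — legitimate, as $r_0$ depends only on $T^*$ — the segment $\{(0,s,0):0\le s\le\bar\xi\}$ is contained in $Q_{r_0/4}$, and the fundamental theorem of calculus in $\bar\xi$ combined with the bound above yields
$$\big|W(0,\bar\xi,0)-W(0,0,0)\big|\le\big\|\partial_{\bar\xi}W\big\|_{L^\infty(Q_{r_0/4})}\,\bar\xi\le C\,\frac{\xi-\xi_0}{1-\eta_0}.$$
Since $W(0,\bar\xi,0)=\dfrac{w(\eta_0,\xi,t_0)}{1-\eta_0}$, $W(0,0,0)=\dfrac{w(\eta_0,\xi_0,t_0)}{1-\eta_0}$, and since $\xi-\xi_0\le\tfrac12(1-\eta_0)^2$ forces $\dfrac{\xi-\xi_0}{1-\eta_0}=(\xi-\xi_0)^{1/2}\cdot\dfrac{(\xi-\xi_0)^{1/2}}{1-\eta_0}\le\dfrac1{\sqrt2}\,(\xi-\xi_0)^{1/2}$, this is exactly the asserted inequality \eqref{12}; and the constant is independent of $x_0$ because $C$ and $r_0$ are.

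The only genuine point to watch is the containment check above: the gradient estimate \eqref{eta1est} lives only on the fixed parabolic cylinder $Q_{r_0/4}$, which in the original coordinate corresponds to a $\xi$-window of width of order $(1-\eta_0)(r_0/4)^3$ at the central slice, whereas the lemma moves $\xi$ over a window of length $\tfrac12(1-\eta_0)^2$; matching the two is what pins down the extra constraint $\varepsilon_+\lesssim r_0(T^*)^3$ on the free parameter $\varepsilon_+$ (compatible with $1-5\varepsilon_+>\tfrac12$). Everything else is bookkeeping of the anisotropic dilation \eqref{Ucons}, the factor $1-\eta_0$ on the right of \eqref{12} being simply the normalization already built into $W=w/(1-\eta_0)$.
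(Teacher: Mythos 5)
Your proof is correct, but it takes a genuinely different route from the paper's. The paper proves Lemma~\ref{9.1} by a maximum-principle argument: working with the first-order operator $L=-\partial_t-\partial_\xi$, building barrier functions $g_1,g_2$ with a cut-off $\varphi$, and using \eqref{ass:w}, \eqref{1drvgr} on the parabolic boundary and \eqref{L} (namely $\|\partial_t w+\partial_\xi w\|_{L^\infty(D_2)}\le C(1-\eta)$) to show $Lg_1>0$ in the interior. You instead read off the statement directly from the pointwise Lipschitz bound $\|\partial_\xi w\|_{L^\infty}\le C$ (the first line of \eqref{1drvgr}, equivalently $\partial_{\bar\xi}(w/(1-\eta_0))$ from \eqref{eta1est}): the fundamental theorem of calculus gives $|w(\eta_0,\xi,t_0)-w(\eta_0,\xi_0,t_0)|\le C(\xi-\xi_0)$, and the window constraint $\xi-\xi_0\le\tfrac12(1-\eta_0)^2$ converts $\tfrac{\xi-\xi_0}{1-\eta_0}$ into $\tfrac1{\sqrt2}(\xi-\xi_0)^{1/2}$. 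This is elementary, correct, and uses a strict subset of the inputs the paper invokes for this lemma.

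A few comments. First, your detour through barred coordinates and the containment check $\varepsilon_+<(r_0/4)^3$ is unnecessary: since \eqref{1drvgr} already gives $\|\partial_\xi w\|_{L^\infty(D_2)}\le C$ with $D_2=[1-3\varepsilon_+,1]\times\R\times[\tfrac{T^*}{4},T^*)$, and the segment $\{(\eta_0,\xi',t_0):\xi_0\le\xi'\le\xi\}$ lies entirely in $D_2$, you can integrate directly in the original $\xi$-variable and avoid shrinking $\varepsilon_+$ at all. (The extra smallness is harmless, just superfluous.) Second, it is worth understanding why the paper takes the longer route: the companion Lemma~\ref{thmxirf} upgrades the conclusion to $|\partial_\xi w|\le C(1-\eta)$, a genuinely stronger weighted derivative bound that does \emph{not} follow from $\|\partial_\xi w\|_{L^\infty}\le C$ by any algebraic manipulation of the window, and for that the maximum-principle framework is essential. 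The paper's proof of Lemma~\ref{9.1} is thus a warm-up for Lemma~\ref{thmxirf}, not a minimal-effort derivation, and Lemma~\ref{9.1} is then re-used on $\partial_p\mathcal{R}_{\xi_0,t_0}$ inside the proof of Lemma~\ref{thmxirf}. Your argument is a valid and shorter proof of Lemma~\ref{9.1} itself, but it does not generalize to the refinement the section is actually after.
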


\begin{proof}
We introduce the domain
$$\Omega_{\xi_0,t_0}=\big\{(\xi,t)\in(\xi_0,\xi_0+1)\times(t_0-c^*,t_0]\big\},
$$
where $c^*=\frac{T^*}{20}.$ Then $[1-2\varepsilon_+,1]\times \Omega_{\xi_0,t_0}\subset D_2.$ Set
$L=-\partial_t-\partial_\xi$ and introduce a nonnegative smooth function
$$ \varphi(\xi)=\left\{
\begin{aligned}
0\quad&\xi_0\leq\xi\leq\xi_0+\frac{ (1-\eta_0)^2}{2},\\
1\quad&\xi_0+ (1-\eta_0)^2\leq\xi\leq\xi_0+1
\end{aligned}
\right.
$$
with $\partial_{\xi}\varphi\geq 0$ and
\begin{align*}
   g_1=&\frac{w(\eta_0,\xi,t)-w(\eta_0,\xi_0,t_0)}{1-\eta_0}-\Big[A_1(\xi-\xi_0)^{\frac{1}{2}}
   +\frac{C_1}{1-\eta_0}(t_0-t)+C_2(t_0-t)\Big] \\&-\frac{A_2}{1-\eta_0}\varphi(\xi)(\xi-\xi_0)\quad\,\,\text{in} \,\,\Omega_{\xi_0,t_0},\\
      g_2=&\frac{w(\eta_0,\xi,t)-w(\eta_0,\xi_0,t_0)}{1-\eta_0}+\Big[A_1(\xi-\xi_0)^{\frac{1}{2}}
   +\frac{C_1}{1-\eta_0}(t_0-t)+C_2(t_0-t)\Big]\\&+\frac{A_2}{1-\eta_0}\varphi(\xi)(\xi-\xi_0)\quad
    \,\,\text{in}\,\,\Omega_{\xi_0,t_0},
\end{align*}
where $A_1,C_1,A_2,C_2$ are big constants to be determined. As $\xi-\xi_0>0 \,\,in \,\,\Omega_{\xi_0,t_0},$ $g_1,g_2$ are well defined.
We only prove $g_1\leq 0\,\,in \,\,\Omega_{\xi_0,t_0}$ and the argument for $g_2\geq 0\,\,in \,\,\Omega_{\xi_0,t_0}$ is similar. Then taking $t=t_0$,  and thanks to $\varphi(\xi)=0\,\,in\,\,\big[\xi_0,\xi_0+\frac{(1-\eta_0)^2}{2}\big],$ we have \eqref{12}.\smallskip

We will first show that
$g_1\leq 0$ on $\partial_p\Omega_{\xi_0,t_0},$ where
$$
\partial_p\Omega_{\xi_0,t_0}=\big\{\xi=\xi_0+1,t\in[t_0-c^*,t_0]\}\cup\big\{t=t_0-c^*,\xi\in[\xi_0,\xi_0+1]\big\}
\cup\big\{\xi=\xi_0,t\in[t_0-c^*,t_0]\big\}.
$$
Since $\|\frac{w(\eta_0,\xi,t)-w(\eta_0,\xi_0,t_0)}{1-\eta_0}\|_{L^\infty(\Omega_{\xi_0,t_0})}\leq C$ by \eqref{ass:w}, we take $C_2,A_2\geq C$ to ensure $g_1\leq 0$ on $\partial_p\Omega_{\xi_0,t_0}\cap\big(\{\xi=\xi_0+1\}\cup\{t=t_0-c^*\}\big).$ By \eqref{1drvgr}, we take $C_1$ big enough to ensure $g_1\leq 0$ on $\partial_p\Omega_{\xi_0,t_0}\cap\{\xi=\xi_0\}.$
Next we will show that $Lg_1>0$ in $\Omega_{\xi_0,t_0}$, and therefore $g_1$ can only obtain its maximum at $\partial_p\Omega_{\xi_0,t_0}$ so that we can conclude $g_1\leq 0.$

Firstly, by \eqref{L}, we have
$$\Big|L \frac{w(\eta_0,\xi,t)-w(\eta_0,\xi_0,t_0)}{1-\eta_0}\Big|\leq C.$$
Secondly,
$$\Big|L\Big(\frac{C_1}{1-\eta_0}(t_0-t)+C_2(t_0-t)\Big)\Big|\leq \frac{C}{1-\eta_0}+C.$$
Since we already fix $C_1,C_2$ for the boundary condition as above, we only write $C$ in the right hand side now.
Thirdly,
$$ L\big[-(\xi-\xi_0)^{\frac{1}{2}}\big]=\partial_\xi(\xi-\xi_0)^{\frac{1}{2}}\geq\left\{
\begin{aligned}
&\frac{1}{2}(1-\eta_0)^{-1}\,\,\,&\xi_0\leq\xi\leq\xi_0+(1-\eta_0)^2,\\
& 0\,\,\,&\xi_0+(1-\eta_0)^2<\xi\leq \xi_0+1.
\end{aligned}
\right.
$$
For $ \varphi(\xi)$, we note that $ -\partial_\xi(-\varphi(\xi))\geq 0.$ Hence,
$$L\Big(-\frac{1}{1-\eta_0}\varphi(\xi)(\xi-\xi_0)\Big)=\partial_\xi\Big(\frac{1}{1-\eta_0}\varphi(\xi)(\xi-\xi_0)\Big)\geq \frac{1}{1-\eta_0}\varphi(\xi)$$
in $\Omega_{\xi_0,t_0},$
which implies that
$$L\Big(-\frac{1}{1-\eta_0}\varphi(\xi)(\xi-\xi_0)\Big)\geq\left\{
\begin{aligned}
&\frac{1}{1-\eta_0}\,\,\,&\xi_0+(1-\eta_0)^2\leq\xi\leq\xi_0+1,\\
&0\,\,\,&\xi_0\leq\xi<\xi_0+(1-\eta_0)^2.
\end{aligned}
\right.
$$
Therefore, by taking $A_1,A_2$ large independent of $\eta_0,$ we have
$$L\Big(-A_1(\xi-\xi_0)^{\frac{1}{2}}
   -\frac{A_2}{1-\eta_0}\varphi(\xi)(\xi-\xi_0)\Big)> \frac{C}{1-\eta_0}$$
for $\xi_0\leq\xi\leq\xi_0+1$, which implies $Lg_1>0.$
\end{proof}

Now we further improve the result in Lemma \ref{9.1}.

\begin{lemma}\label{thmxirf}
For any $(\eta_0,\xi_0,t_0)\in [1-2\varepsilon_+,1]\times\R\times [\frac{2T^*}{3},T^*),$ we have
\begin{align}\label{xi1}
\Big|\frac{w(\eta_0,\xi,t_0)-w(\eta_0,\xi_0,t_0)}{1-\eta_0}\Big|\leq C_{\lambda,\Lambda,T^*}(\xi-\xi_0),\,\,\,\xi\in \big[\xi_0,\xi_0+\frac{c(1-\eta_0)^2}{8\sqrt{2}}\big],
\end{align}
where $C_{\lambda,\Lambda,T^*}$ is independent of choice of $(\eta_0,\xi_0,t_0)$ and $c<1$ only depends on $T^*.$ In particular, letting $\xi_0\rightarrow\xi_0^+,$ we obtain
\begin{align}
 \Big|\frac{\partial_\xi w(\eta_0,\xi_0,t_0)}{1-\eta_0}\Big|\leq C_{\lambda,\Lambda,T^*}.
\end{align}
\end{lemma}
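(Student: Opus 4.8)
The plan is to push the barrier / maximum–principle argument of Lemma \ref{9.1} one step further, trading its $\tfrac12$–Hölder profile for a linear one. Freeze $\eta=\eta_0$, write $\rho=1-\eta_0$, and keep the operator $L=-\partial_t-\partial_\xi$ of Lemma \ref{9.1}. The starting observation is that, by \eqref{L},
\[
\Bigl|L\,\frac{w(\eta_0,\xi,t)-w(\eta_0,\xi_0,t_0)}{1-\eta_0}\Bigr|=\frac{1}{1-\eta_0}\bigl|(\partial_t+\partial_\xi)w(\eta_0,\xi,t)\bigr|\le C_{\lambda,\Lambda,T^*}
\]
with an \emph{absolute} (i.e. $\eta_0$–independent) constant, so the interior sign condition $Lg>0$ can be produced with an absolute constant. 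One would then set up, on a cylinder $\Omega$ of spatial width and time depth both of order $\rho^2$, a two–sided comparison for $\pm\tfrac{w(\eta_0,\xi,t)-w(\eta_0,\xi_0,t_0)}{1-\eta_0}$ against a barrier built from: a linear term $A_1(\xi-\xi_0)$ (whose $L$ is the constant $A_1$, which beats the $O(1)$ source); the monotone cut–off term $\tfrac{A_2}{1-\eta_0}\varphi(\xi)(\xi-\xi_0)$ of Lemma \ref{9.1}, which disposes of the outflow part of $\partial_p\Omega$; a time term controlling the side $\{\xi=\xi_0\}$; and, on $\{\xi=\xi_0+\ell\}$ with $\ell\sim\rho^2$, the square–root datum \eqref{12} supplied by Lemma \ref{9.1}. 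Reading $g\le 0$ off at $t=t_0$ on the set $\{\varphi=0\}$ would give \eqref{xi1}, and $\xi\to\xi_0^+$ the pointwise derivative bound.

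The main obstacle, and the reason Lemma \ref{9.1} has to be improved separately rather than stated linearly from the outset, is a feedback loop between the two quantities that are only $O(1)$ a priori: $\partial_\xi w(\eta_0,\cdot,\cdot)$, which is exactly what we want to be $O(\rho)$, and $\partial_t w(\eta_0,\xi_0,\cdot)$, which enters the $\{\xi=\xi_0\}$ boundary datum of the barrier and which, through $\partial_t w=-\eta_0\partial_\xi w+w^2\partial_{\eta\eta}w$ together with \eqref{L}, is slaved to $\partial_\xi w$. Because of this, a single linear barrier cannot be closed: one must break the loop by using the transport structure $|(\partial_t+\partial_\xi)w|\le C(1-\eta)$ of \eqref{L} directly — comparing $w(\eta_0,\cdot,\cdot)$ along the near–$(1,1)$ characteristics instead of at frozen $\xi_0$, so that the $\{\xi=\xi_0\}$ datum is itself controlled at order $\rho$ — and then iterate. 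Concretely I would run the comparison over a geometric sequence of scales $\ell_k=\rho^2 2^{-k}$, feeding the step–$(k-1)$ output (the square–root estimate for $k=0$) as the outer datum at step $k$, with the time window rescaled to each $\ell_k$; the per–step constants and scale factors then form convergent products, so the resulting Lipschitz constant stays absolute and the admissible interval stabilizes at a fixed fraction of $\rho^2$. This is where the explicit $\tfrac{c}{8\sqrt2}$ and the extra room in time between $2T^*/3$ and $T^*$ (versus $T^*/2$ in Lemma \ref{9.1}) come from.

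The remaining steps are routine but need care: (i) after rescaling the cut–off $\varphi$ and the window to the scale $\ell_k$, checking that every inequality in the comparison — the interior sign $Lg>0$, the three boundary inequalities, and the balancing of the outer datum — stays uniform in $\eta_0$; (ii) verifying that the recursion for the Lipschitz constants and the shrinkage of the admissible interval are genuinely geometric, so the final constant depends only on $\lambda,\Lambda,T^*$; and (iii) carrying the argument for all admissible base points $(\eta_0,\xi_0,t_0)$ at once, which is precisely what makes the step–$(k-1)$ bound usable as the outer datum at step $k$ and yields, in the limit, the stated bound $\bigl|\partial_\xi w(\eta_0,\xi_0,t_0)\bigr|\le C_{\lambda,\Lambda,T^*}(1-\eta_0)$.
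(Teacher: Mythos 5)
You correctly spot the two load-bearing ideas: tilt the comparison domain along the near-$(1,1)$ characteristic so that the incoming lateral boundary is controlled by $(\partial_t+\partial_\xi)w=O(1-\eta)$ via \eqref{L} rather than by $\partial_\xi w$ or $\partial_t w$ individually, and feed in the $\sqrt{\cdot}$ datum from Lemma \ref{9.1} on the remaining sides. But your conclusion that ``a single linear barrier cannot be closed'' and that an iteration over scales $\ell_k=\rho^2 2^{-k}$ is required is mistaken, and this is where the proposal diverges from (and is weaker than) what actually happens. The paper closes the estimate in one pass. The ingredient you are missing is the form of the cut-off: instead of the $\varphi(\xi)$ of Lemma \ref{9.1}, the paper takes the \emph{transported} cut-off $\varphi\bigl(\tfrac{\xi-t-(\xi_0-t_0)}{\sqrt2}\bigr)$ premultiplied by $\tfrac{C_2}{1-\eta_0}\bigl(\xi-(\xi_0+t-t_0)\bigr)$; both factors are functions of the characteristic variable $\xi-t$ alone, so the whole term is annihilated by $L=-\partial_t-\partial_\xi$. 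It therefore costs nothing in the interior sign condition $Lg_1>0$, yet on the outflow characteristic boundary (at distance $\varepsilon^2\sim(1-\eta_0)^2$ from the incoming one) it has magnitude $\tfrac{C_2}{1-\eta_0}\varepsilon^2\sim C_2(1-\eta_0)$, which dominates the $O\bigl((1-\eta_0)\bigr)$ datum inherited there from $\|\partial_\xi w\|_{L^\infty}\le C$, while vanishing on the final reading interval $\xi\in[\xi_0,\xi_0+\tfrac{c(1-\eta_0)^2}{8\sqrt2}]$ at $t=t_0$. The bottom boundary at $t=t_0-\varepsilon$ is handled by splitting $w(\eta_0,\xi,t_0-\varepsilon)-w(\eta_0,\xi_0,t_0)$ into a piece estimated by Lemma \ref{9.1} (giving $O(\varepsilon)$ after division by $1-\eta_0$) and a piece estimated by integrating \eqref{L} along the characteristic (also $O(\varepsilon)$), both absorbed by $C_1(t_0-t)$.

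So once the tilt is in place, the transported cut-off plus the $C_1(t_0-t)$ term already close the barrier; there is no residual $O(1)$ mismatch to iterate away, and the ``feedback loop'' you describe never enters. Your iteration scheme is not only unnecessary, it is also left at the level of a plan (``should form convergent products,'' ``needs care''), and the uniformity-in-$\eta_0$ of the per-step constants is precisely the kind of thing that is hard to certify without writing it out. The single-pass argument bypasses all of that.
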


\begin{proof}
We introduce the domain
$$\mathcal{R}_{\xi_0,t_0}=\big\{(\xi,t)|t\in(t_0-\varepsilon,t_0],\xi\in(\xi_0+t-t_0,\xi_0+t-t_0+\varepsilon^2)\big\},
$$
where $\varepsilon=\sqrt{\frac{c(1-\eta_0)^2}{2}}$ and we take $c$ small only depending on $T^*$ and hence independent of $\eta_0$ such that $\frac{2T^*}{3}-\varepsilon>\frac{T^*}{2}.$
Set $L=-\partial_t-\partial_\xi$ and introduce a nonnegative smooth function
$$ \varphi(s)=\left\{
\begin{aligned}
0\quad& 0\leq s\leq\frac{\varepsilon^2}{4\sqrt{2}},\\
1\quad& \frac{\varepsilon^2}{2\sqrt{2}}\leq s\leq\frac{\varepsilon^2}{\sqrt{2}},
\end{aligned}
\right.
$$
and
\begin{align*}
   g_1=&\frac{w(\eta_0,\xi,t)-w(\eta_0,\xi_0,t_0)}{1-\eta_0}-\big[A_1(\xi-\xi_0)+C_1(t_0-t)\big]\\&-\frac{C_2}{1-\eta_0}(\xi-(\xi_0+t-t_0))\varphi\Big(\frac{\xi-t-(\xi_0-t_0)}{\sqrt{2}}\Big)\quad\,\,\text{in}\,\,\mathcal{R}_{\xi_0,t_0},\\
     g_2=&\frac{w(\eta_0,\xi,t)-w(\eta_0,\xi_0,t_0)}{1-\eta_0}+\big[A_1(\xi-\xi_0)+C_1(t_0-t)\big]\\&+\frac{C_2}{1-\eta_0}(\xi-(\xi_0+t-t_0))\varphi\Big(\frac{\xi-t-(\xi_0-t_0)}{\sqrt{2}}\Big)
\quad\,\,\text{in}\,\,\mathcal{R}_{\xi_0,t_0},
\end{align*}
where $A_1,C_1,C_2$ are big constants to be determined.
We only prove $g_1\leq 0\,\,in\,\,\mathcal{R}_{\xi_0,t_0}$, since the argument for $g_2\geq 0\,\,in\,\,\mathcal{R}_{\xi_0,t_0}$ is similar. After we prove these, take $t=t_0$ and consider $\xi\in[\xi_0,\xi_0+\frac{c(1-\eta_0)^2}{8\sqrt{2}}].$
Thanks to $|\frac{\xi-\xi_0}{\sqrt{2}}|\leq\frac{c(1-\eta_0)^2}{16}\leq\frac{\varepsilon^2}{8},$
we have
$$
\varphi\Big(\frac{\xi-t_0-(\xi_0-t_0)}{\sqrt{2}}\Big)=0\,\,\text{in}\,\,[\xi_0,\xi_0+\frac{c(1-\eta_0)^2}{8\sqrt{2}}].
$$
Then we have \eqref{xi1}. \smallskip

First of all, we  show
$g_1\leq 0$ on $\partial_p\mathcal{R}_{\xi_0,t_0}.$ On
$\big\{(\xi,t)|t\in[t_0-\varepsilon,t_0],\xi=\xi_0+t-t_0\big\},$ we have
\begin{align*}
&\frac{w(\eta_0,\xi_0+t-t_0,t)-w(\eta_0,\xi_0,t_0)}{1-\eta_0}\\
&=\int_{0}^{t-t_0}\frac{1}{1-\eta_0}\frac{d}{ds} w(\eta_0,\xi_0+s,t_0+s)ds\\
&=\int_{0}^{t-t_0}\frac{1}{1-\eta_0} (\partial_\xi+\partial_t) w(\eta_0,\xi_0+s,t_0+s)ds.
\end{align*}
Hence, by \eqref{L}, we have
\begin{align}\label{sli}
\Big|\frac{w(\eta_0,\xi_0+t-t_0,t)-w(\eta_0,\xi_0,t_0)}{1-\eta_0}\Big|\leq C(t_0-t).
\end{align}
On $\big\{(\xi,t)|t\in[t_0-\varepsilon,t_0],\xi=\xi_0+t-t_0+\varepsilon^2\big\}$,
we have
\begin{align*}
&\Big|\frac{w(\eta_0,\xi_0+t-t_0+\varepsilon^2,t)-w(\eta_0,\xi_0,t_0)}{1-\eta_0}\Big|\\
&\leq\Big|\frac{w(\eta_0,\xi_0+t-t_0+\varepsilon^2,t)-w(\eta_0,\xi_0+t-t_0,t)}{1-\eta_0}\Big|+\Big|\frac{w(\eta_0,\xi_0+t-t_0,t)-w(\eta_0,\xi_0,t_0)}{1-\eta_0}\Big|\\
&\leq  \frac{C}{1-\eta_0}(\xi-(\xi_0+t-t_0))\varphi\Big(\frac{\xi-t-(\xi_0-t_0)}{\sqrt{2}}\Big)\mid_{\xi=\xi_0+t-t_0+\varepsilon^2}
+C(t_0-t),
\end{align*}
where we have used \eqref{sli} and \eqref{1drvgr} in the last inequality.
On $\big\{(\xi,t)|t=t_0-\varepsilon,\xi\in[\xi_0-\varepsilon,\xi_0-\varepsilon+\varepsilon^2]\big\},$ we have
\begin{align*}
&\Big|\frac{w(\eta_0,\xi,t_0-\varepsilon)-w(\eta_0,\xi_0,t_0)}{1-\eta_0}\Big|
\\&\leq \Big|\frac{w(\eta_0,\xi,t_0-\varepsilon)-w(\eta_0,\xi_0-\varepsilon,t_0-\varepsilon)}{1-\eta_0}\Big|+\Big|\frac{w(\eta_0,\xi_0-\varepsilon,t_0-\varepsilon)-w(\eta_0,\xi_0,t_0)}{1-\eta_0}\Big|\\&\leq C\varepsilon\leq C(t_0-t)\mid_{t=t_0-\varepsilon},
\end{align*}
where we used \eqref{sli} with $t=t_0-\varepsilon$ for the second term and   used \eqref{12} for the first term(since $\xi\in[\xi_0-\varepsilon,\xi_0-\varepsilon+\varepsilon^2]$ with $\varepsilon^2=\frac{c(1-\eta_0)^2}{2}\leq\frac{(1-\eta_0)^2}{2}$). In summary, taking $C_1,C_2$ large, we have $g_1\leq 0$ on $\partial_p\mathcal{R}_{\xi_0,t_0}.$\smallskip

Next we show $Lg_1>0$ in $\mathcal{R}_{\xi_0,t_0}$ by taking $A_1$ large. Then $g_1$ can only obtain its maximum on $\partial_p\mathcal{R}_{\xi_0,t_0}$
 so that we can conclude $g_1\leq 0.$ By \eqref{L}, we have
$$
L\Big(\frac{w(\eta_0,\xi,t)-w(\eta_0,\xi_0,t_0)}{1-\eta_0}-C_1(t_0-t)\Big)\geq-C.
$$
Since we already fix $C_1$ for the boundary condition as above, we only write $C$ in the right hand side now. Using the facts that
\begin{align*}
& (\partial_t+\partial_\xi)\varphi\Big(\frac{\xi-t-(\xi_0-t_0)}{\sqrt{2}}\Big)=
    \frac{1}{\sqrt{2}}\partial_s\varphi\mid_{s=\frac{\xi-t-(\xi_0-t_0)}{\sqrt{2}}}(1-1)=0,\\
& (\partial_t+\partial_\xi)(\xi-(\xi_0+t-t_0))=0,
\end{align*}
we infer that
$$
(\partial_t+\partial_\xi)\frac{C_2}{1-\eta_0}\big(\xi-(\xi_0+t-t_0)\big)\varphi\Big(\frac{\xi-t-(\xi_0-t_0)}{\sqrt{2}}\Big)=0.
$$
In summary, we obtain
$$Lg_1\geq A_1-C.$$
By taking $A_1$ large, we have $Lg_1>0.$
\end{proof}

\subsection{Gradient estimate near $\eta=0$}
Thanks to $\eta=u,$ we have
\begin{align}\label{A3dm}
 \big\{(\eta,\xi,t) \in(0,c(1-e^{-\delta}))\times  \R\times [0,T^*)\big\}
\subset\big\{(y,x,\tau) \in(0,\delta)\times  \R\times [0,T^*)\big\}.
\end{align}
Let $\epsilon_1=\frac{c(1-e^{-\delta})}{2}$ and introduce the domain
$$
D_0=\big\{(\eta,\xi,t) \in(0,\epsilon_1)\times  \R\times [0,T^*)\big\}.
$$
Without loss of generality, we assume $\epsilon_1$ small so that
\begin{align}\label{ep1}
\epsilon_1\leq\min\big\{\frac{1}{4},\frac{T^*}{8}\big\}.
\end{align}
By \eqref{ass:w}, we have
\begin{align}\label{aA}
    a \leq w\leq A\quad \text{in}\,\,D_0
\end{align}
 for some positive constants $a,A.$ By our assumption \eqref{ph2},  we have
\begin{align}\label{bdetaw}
 |\partial_\eta w|\leq C\quad \text{in}\,\,D_0,
\end{align}
{where $C$ depends on constants $a,A$ and constant $C_0$ in \eqref{ph2}. It remains to prove that
\begin{align}
\|\partial_{\xi}w\|_{L^\infty((0,\frac{\epsilon_1}{2})\times  \R\times [\frac{2T^*}{3},T^*))}\leq C,
\end{align}
which will be proved in the following two lemmas.

\begin{lemma} \label{xi0el}
It holds that
\begin{align}\label{2.8}
\|\partial_{\xi}w\|_{L^\infty((0,\frac{3}{4}\epsilon_1)\times  \R\times [\frac{T^*}{8},T^*))}\leq C\eta^{-3},
\end{align}
where $C$ depends only on $a,A,T^*.$
\end{lemma}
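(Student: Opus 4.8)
\medskip
The plan is to derive the bound from the interior regularity theory of subsection~5.1, applied after an anisotropic rescaling whose scale equals the distance $\eta_0$ to the Neumann boundary $\{\eta=0\}$ of the point at which we estimate $\partial_\xi w$. This is the same device that produces the crude estimate near $\eta=1$ in subsection~5.2: near the boundary the hypoelliptic cylinders on which the Schauder estimate operates must shrink, and $\partial_\xi$ scales like the cube of the shrinking factor, which is exactly what produces the degenerate weight $\eta^{-3}$.

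First I would fix an arbitrary $(\eta_0,\xi_0,t_0)\in(0,\frac{3}{4}\epsilon_1)\times\R\times[\frac{T^*}{8},T^*)$ and reduce the lemma to the pointwise bound $|\partial_\xi w(\eta_0,\xi_0,t_0)|\le C\eta_0^{-3}$ with $C=C(a,A,T^*)$. Then I would pass to the variables
\beno
&&(\bar\eta,\bar\xi,\bar t)=\Big(\frac{\eta}{\eta_0},\ \frac{\xi-\xi_0}{\eta_0^3},\ \frac{t-t_0}{\eta_0^2}\Big),
\eeno
which carry the reference point to $(1,0,0)$. A direct computation shows that \eqref{eq:Pran-Cro} is invariant in form: $w$ again solves $-\partial_{\bar t}w-\bar\eta\partial_{\bar\xi}w+w^2\partial^2_{\bar\eta\bar\eta}w=0$, the diffusion coefficient still satisfies $a^2\le w^2\le A^2$ by \eqref{aA}, and the Neumann condition becomes $\partial_{\bar\eta}w|_{\bar\eta=0}=0$. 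The essential point is that the reference point now sits at distance $1$ from $\{\bar\eta=0\}$, so this is a genuinely interior situation and the boundary condition is irrelevant; this is exactly why one only gets the crude, $\eta$-degenerate bound here, the sharp bound up to $\eta=0$ being the subject of the next lemma.

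Next I would check that a fixed-size hypoelliptic cylinder $\bar Q$ around $(1,0,0)$ --- of the type in \eqref{qr} and Proposition~\ref{localsch}, say of radius $r=\frac14$ adapted to the drift value $\bar\eta=1$ --- lands back inside $D_0$ uniformly in $\eta_0$. In the original variables $\bar Q$ corresponds to $\eta\in(\frac{3}{4}\eta_0,\frac{5}{4}\eta_0)$, $t\in(t_0-\frac{\eta_0^2}{16},t_0]$ and a $\xi$-interval of length $\sim\eta_0^3$. Since $\eta_0<\frac{3}{4}\epsilon_1$ gives $\frac{5}{4}\eta_0<\epsilon_1$, the $\eta$-range stays in $(0,\epsilon_1)$ and bounded away from $\{\eta=0\}$; and since $\eta_0<\epsilon_1\le\min\{\frac14,\frac{T^*}{8}\}$ by \eqref{ep1}, we have $\eta_0^2<\epsilon_1^2\le\frac14\cdot\frac{T^*}{8}<t_0$, so $t_0-\frac{\eta_0^2}{16}>0$ and hence $\bar Q\subset D_0$, where $a\le w\le A$. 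On $\bar Q$ I would then run the argument of subsection~5.1 line for line: Proposition~\ref{prop:holder} (applied to $w^{-1}$, as in \eqref{m12dP}, so that there is no source term) gives a uniform $C^\alpha$ bound for $w$, hence for the coefficient $w^2$, on a slightly smaller cylinder; Proposition~\ref{localsch} together with \eqref{eq:holder-inter} upgrades this to $\|\partial_{\bar\eta}w\|_{\mathcal{C}^{0,\alpha}}+\|\partial^2_{\bar\eta\bar\eta}w\|_{\mathcal{C}^{0,\alpha}}\le C$; and the Bernstein computation leading to \eqref{eq:w-dxi} then yields $\|\partial_{\bar\xi}w\|_{L^\infty}\le C$ on a cylinder still containing $(1,0,0)$, every constant depending only on $a,A$ because all the cylinders now have fixed size. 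Undoing the scaling, $\partial_\xi w=\eta_0^{-3}\,\partial_{\bar\xi}w$, so $|\partial_\xi w(\eta_0,\xi_0,t_0)|\le C\eta_0^{-3}$; as the point was arbitrary, the lemma follows.

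The main obstacle --- the only genuinely delicate part --- is the uniform bookkeeping in the third paragraph: verifying that the rescaled cylinder fits inside the region $D_0$ on which the two-sided bound $a\le w\le A$ is available, uniformly as $\eta_0\to0^+$. The $\eta$-direction is controlled by $\eta_0<\frac34\epsilon_1$, but the time direction is exactly where hypothesis \eqref{ep1} is needed: the cylinder has backward time-extent of order $\eta_0^2$, so one needs $\eta_0^2\ll t_0$, and \eqref{ep1} supplies precisely $\eta_0^2\le\epsilon_1^2<t_0$. It is worth noting that no data for $w$ at the bottom slice of $\bar Q$ or at $\{\eta=\epsilon_1\}$ is required, since the estimates of subsection~5.1 are purely local: they bound derivatives of $w$ only in terms of $\|w\|_{L^\infty}$ over the cylinder, which here does not exceed $A$.
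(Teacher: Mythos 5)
Your proof is correct and is essentially the paper's argument: one performs the anisotropic parabolic rescaling at scale $\eta_0$ (the paper writes it centered at $\eta_0$ with scale $\eta_0/3$ and the Galilean shift $\xi\mapsto\xi-\xi_0-\eta_0(t-t_0)$, while you center the $\eta$-scaling at $0$ so that no shift is needed; the two are the same up to a translation), checks that a fixed-size hypoelliptic cylinder around the rescaled reference point lands inside $D_0$ using \eqref{ep1}, applies the interior regularity of subsection~5.1 to get a uniform bound on $\partial_{\bar\xi}w$, and transforms back, with the factor $\eta_0^{-3}$ coming from the $\xi$-scaling. The bookkeeping in your third and fourth paragraphs matches the paper's implicit verification.
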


\begin{proof}
For any $x_0=(\eta_0,\xi_0,t_0)\in(0,\frac{3}{4}\epsilon_1)\times\R\times[\frac{T^*}{8},T^*),$ we introduce a reversible transform $T_{x_0}:\mathcal{Q}_1=(-1,1)\times(-1,1)\times(-1,0]\mapsto D_0$ such that
\begin{align*}
(\bar{\eta},\bar{\xi},\bar{t})=\Big(\frac{\eta-\eta_0}{\frac{\eta_0}{3}},\frac{\xi-\xi_0-\eta_0(t-t_0)}{(\frac{\eta_0}{3})^3},\frac{t-t_0}{(\frac{\eta_0}{3})^2}\Big),
\end{align*}
where $(\eta,\xi,t)=T_{x_0}(\bar{\eta},\bar{\xi},\bar{t}).$
Next let $$\overline{w}(\bar{\eta},\bar{\xi},\bar{t})=
w(\eta,\xi,t)$$ in the domain $\mathcal{Q}_1.$
Then $\overline{w}$ satisfies
$$
-\partial_{\bar{t}}\bar{w}-\bar{\eta} \partial_{\bar{\xi}} \overline{w} +\overline{w}^2\partial^2_{\bar{\eta}\bar{\eta}}\overline{w}=0.
$$
By \eqref{aA}, $a\leq\overline{w}\leq A.$
 Applying the interior estimate to $\overline{w}$ in $\mathcal{Q}_1$, we get $$|\partial_{\bar{\xi}}\overline{w}(0,0,0)|\leq C$$ where $C$ only depends on $a, A.$ Back to the original coordinate, we have $|\partial_{\xi}w(x_0)\eta_0^{3}|\leq C.$ Since $C$ is independent of the choice of $x_0,$ we have
$
 |\partial_{\xi}w(\eta,\xi,t)\eta^{3}|\leq C.
$
\end{proof}

\begin{lemma}\label{xi0e}
It holds that
\begin{align}
\|\partial_{\xi}w\|_{L^\infty((0,\frac{\epsilon_1}{2})\times  \R\times [\frac{2T^*}{3},T^*))}\leq C,
\end{align} where $C$ depends only on $a,A,T^*$ and $\epsilon_1.$
\end{lemma}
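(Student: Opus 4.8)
The plan is to upgrade the degenerate bound \eqref{2.8} of Lemma \ref{xi0el} to a uniform one, up to the Neumann boundary $\{\eta=0\}$, by a comparison argument with explicit auxiliary functions in the spirit of Proposition \ref{prop:mono} and Lemmas \ref{9.1} and \ref{thmxirf}. The difference with the analysis near $\eta=1$ is that there the decay $w\sim 1-\eta$ and the characteristic‑derivative estimate \eqref{L} were available, whereas near $\eta=0$ there is no vanishing of $w$; instead one has $a\le w\le A$ in $D_0$ by \eqref{aA}, the bound $|\partial_\eta w|\le C$ in $D_0$ by \eqref{bdetaw} (this is what \eqref{ph2} buys us all the way down to $\eta=0$), and the already‑established \eqref{2.8}, which is harmless at heights $\eta$ bounded away from $0$.

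First I would fix an arbitrary base point $(\eta_0,\xi_0,t_0)\in(0,\tfrac{\epsilon_1}{2})\times\R\times[\tfrac{2T^*}{3},T^*)$ and, for small $h>0$, study the $\xi$‑increment $\delta_h w(\eta,\xi,t)=w(\eta,\xi+h,t)-w(\eta,\xi,t)$: a bound $|\delta_h w(\eta_0,\xi_0,t_0)|\le C h$, with $C$ independent of $h$ and of the base point, yields the lemma upon letting $h\to0$. Differencing \eqref{eq:Pran-Cro} shows that $\delta_h w$ solves a linear equation
\[
-\partial_t(\delta_h w)-\eta\,\partial_\xi(\delta_h w)+w^2\,\partial_{\eta\eta}(\delta_h w)=b_h\,\delta_h w,\qquad \partial_\eta(\delta_h w)\big|_{\eta=0}=0,
\]
the Neumann data coming from differencing $\partial_\eta w|_{\eta=0}=0$ in $\xi$, with $b_h=-\bigl(w(\cdot+h)+w\bigr)\partial_{\eta\eta}w(\cdot+h)$. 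Using the equation to replace $w^2\partial_{\eta\eta}w$ by $\partial_t w+\eta\,\partial_\xi w$, the coefficient $b_h$ and the remaining lower‑order terms are controlled on $D_0$ in terms of \eqref{aA}, \eqref{bdetaw} and \eqref{2.8}, with constants that may blow up like a fixed negative power of $\eta$ as $\eta\to0$.

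Next I would work in a box $\mathcal{O}\subset D_0$ adjacent to $\{\eta=0\}$, shaped along the characteristic $\xi-\xi_0=\eta(t-t_0)$ of the transport part, and adjoin an upper barrier $\mathcal{B}$ that is affine in $\xi-\xi_0$ and in $t_0-t$ and has a strictly convex, nondecreasing profile in $\eta$ with strictly positive slope at $\eta=0$ (for instance $e^{K\eta}-1$ with $K$ large). The function $g_1:=\delta_h w-\mathcal{B}$ is then shown to be nonpositive in $\mathcal{O}$ by the maximum principle: in the interior $L\mathcal{B}>0$ because $w^2\partial_{\eta\eta}\mathcal{B}\ge a^2(\text{convexity})$ is made to dominate $b_h\,\delta_h w$ and the transport terms (here $\eta\le\tfrac34\epsilon_1$ keeps $\eta\,\partial_\xi$ under control); on $\{\eta=0\}$ the Neumann identity $\partial_\eta(\delta_h w)=0$ together with $\partial_\eta\mathcal{B}>0$ forces $\partial_\eta g_1<0$, excluding a boundary extremum there exactly as in Proposition \ref{prop:mono}; and on the remaining parabolic faces the inequality $\delta_h w\le\mathcal{B}$ is checked using \eqref{2.8} on the face with $\eta$ bounded away from $0$ and \eqref{aA}, \eqref{bdetaw} on the characteristic faces (by integrating the characteristic derivative of $\delta_h w$). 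The symmetric argument applied to $-\delta_h w$ gives the matching lower bound, and $h\to0$ finishes, with $C$ uniform in $(\eta_0,\xi_0,t_0)$; in the original coordinates this is the claimed $L^\infty$ bound for $\partial_\xi w$.

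The step I expect to be the main obstacle is precisely the design and gluing of this barrier near $\{\eta=0\}$: it must be convex enough in $\eta$, with the correct sign of $\partial_\eta$ at $\eta=0$, to absorb lower‑order terms that themselves degenerate as $\eta\to0$, while still being bounded by an absolute constant at the base point $\eta=\eta_0$, and it must be compatible with the degenerate transport geometry. This is the structural reason why hypothesis \eqref{ph2} is imposed: it is exactly what delivers the usable bound $|\partial_\eta w|\le C$ on all of $D_0$, hence the control of those lower‑order terms up to and including the Neumann boundary.
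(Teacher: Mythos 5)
Your plan replaces the paper's argument with a finite-difference comparison, and this runs into a structural obstruction that the paper's proof (a Bernstein-type maximum principle) is specifically designed to bypass. Differencing \eqref{eq:Pran-Cro} in $\xi$ does give
\[
L(\delta_h w)=b_h\,\delta_h w,\qquad b_h=-\bigl(w(\cdot+h)+w\bigr)\partial^2_{\eta\eta}w(\cdot+h),
\]
but the zeroth-order coefficient $b_h$ is exactly what you have no control over near $\{\eta=0\}$: the available bounds \eqref{aA}, \eqref{bdetaw} (i.e.\ $|\partial_\eta w|\le C$) and the degenerate \eqref{2.8} do not give a uniform bound on $\partial^2_{\eta\eta}w$, only one that blows up as a negative power of $\eta$ (rescaled interior estimates yield at best $|\partial^2_{\eta\eta}w|\lesssim\eta^{-2}$). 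With a zeroth-order coefficient of uncontrolled size and arbitrary sign, the comparison principle for $L-b_h$ does not apply in the clean form you invoke (the usual exponential-in-time multiplier needs $\lambda\gtrsim\sup|b_h|=+\infty$), and any barrier $\mathcal{B}$ with $L\mathcal{B}<b_h\mathcal{B}$ near $\eta=0$ would itself have to degenerate at the same rate, so the argument would only reproduce \eqref{2.8} rather than improve it. Your signs are also reversed: to show $g_1=\delta_h w-\mathcal{B}\le0$ one must exclude an interior maximum, which requires $Lg_1>0$, i.e.\ $L\mathcal{B}<b_h\delta_h w$ (not $L\mathcal{B}>0$), and exclude a maximum on $\{\eta=0\}$, which requires $\partial_\eta g_1|_{\eta=0}>0$, forcing $\partial_\eta\mathcal{B}|_{\eta=0}<0$ (not $>0$); compare with Proposition~\ref{prop:mono}.

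The paper proceeds quite differently. After the same translation $T_{\xi_0,t_0}$, it applies the maximum principle directly to the Bernstein functional
\[
 f=|\partial_{\bar\xi}w|^2\bar\eta^2\zeta^{10}+M_1(|\partial_{\bar\xi}w|^2+1)^{2/3}\zeta^{10}+M_2(|\partial_{\bar\xi}w|^2+B)^{1/3}|\partial_{\bar\eta}w|^2\zeta^{10}+M_3|\partial_{\bar\eta}w|^2\zeta^{10}-M_4\bar t+\bar\eta.
\]
The decisive gain over a linear comparison is that $L$ applied to the squared derivatives $|\partial_{\bar\xi}w|^2$, $|\partial_{\bar\eta}w|^2$ produces coercive good terms $2w^2|\partial^2_{\bar\xi\bar\eta}w|^2$ and $2w^2|\partial^2_{\bar\eta\bar\eta}w|^2$; the fractional powers $2/3$, $1/3$, together with \eqref{2.8} in the form $|\partial_{\bar\xi}w|^{1/3}\bar\eta\le C$, are tuned precisely so that these good terms absorb the dangerous contributions $|\partial_{\bar\xi}w|^{2/3}|\partial^2_{\bar\eta\bar\eta}w|^2$ that arise, and the linear term $\bar\eta$ supplies the Neumann sign $\partial_{\bar\eta}f|_{\bar\eta=0}=1>0$. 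The correct pieces of your outline — working in a strip adjacent to $\{\eta=0\}$, exploiting $\partial_\eta w|_{\eta=0}=0$ for the Neumann sign, and recognizing that \eqref{ph2} is what makes $|\partial_\eta w|\le C$ available up to the boundary — all reappear in the paper's proof, but the auxiliary function is built out of the derivatives themselves rather than out of a linearized increment, and that nonlinearity is what makes the uncontrolled $\partial^2_{\eta\eta}w$ harmless.
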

\begin{proof}For any $x_0=(\eta_0,\xi_0,t_0)\in(0,\frac{\epsilon_1}{2})\times  \R\times [\frac{2T^*}{3},T^*),$ we introduce a reversible transform $T_{\xi_0,t_0}:\mathcal{Q}=(0,\frac{\epsilon_1}{2})\times(-1,1)\times(-\frac{T^*}{2},0]\mapsto (0,\frac{3}{4}\epsilon_1)\times  \R\times [\frac{T^*}{8},T^*)$ such that
\begin{align*}
(\bar{\eta},\bar{\xi},\bar{t})=(\eta,\xi-\xi_0,t-t_0).
\end{align*} Then in $\mathcal{Q},$ \eqref{aA} and \eqref{bdetaw}( i.e.  $
   |\partial_{\bar{\eta}} w|\leq C
$) hold, and we have
$$-\partial_{\bar{t}}w-\bar{\eta} \partial_{\bar{\xi}} w +w^2\partial^2_{\bar{\eta}\bar{\eta}}w=0.$$
We take a cut-off function $\zeta(\bar{\xi},\bar{t})\in C^\infty\big([-1,1]\times[-\frac{T^*}{2},0]\big)$, which satisfies $0\leq\zeta\leq 1 $ and $$\zeta(\bar{\xi},\bar{t})=\left\{
\begin{aligned}
0\qquad& \Big\{\bar{\xi}\in [-1,-\frac{3}{4}]\cup[\frac{3}{4},1]\Big\}\cup\Big\{\bar{t}\in[-\frac{T^*}{2},-\frac{3T^*}{8}]\Big\},\\
1\qquad& \big[-\frac{1}{2},\frac{1}{2}\big]\times\big[-\frac{T^*}{4},0\big].
\end{aligned}
\right.
$$
Let $L=-\partial_{\bar{t}}-\bar{\eta}\partial_{\bar{\xi}}+w^2\partial_{\bar{\eta}\bar{\eta}}^2$ and introduce
\begin{align*}
   f= &|\partial_{\bar{\xi}}  w|^2\bar{\eta}^2 \zeta^{10}
  +M_1(|\partial_{\bar{\xi}}  w|^2+1)^{\frac{2}{3}} \zeta^{10}+M_2(|\partial_{\bar{\xi}}  w|^2+B)^{\frac{1}{3}}|\partial_{\bar{\eta}}  w|^2 \zeta^{10}\\&+M_3|\partial_{\bar{\eta}}  w|^2\zeta^{10}-M_4\bar{t}+\bar{\eta},
\end{align*} where $B,M_1,M_2,M_3,M_4,$ are big positive constants to be determined.\medskip

\no(A) First of all, we consider $\partial_{p}\mathcal{Q}\setminus\{\bar{\eta}=0\}.$ By interior estimates in the previous section, we have $f\leq C$ on $\{\bar{\eta}=\frac{\epsilon_1}{2}\},$ where $C$ depends only on $a,A,T^*,M_i$ and $\epsilon_1.$ By the definition of $\zeta,$ we have $\max_{\partial_{p}\mathcal{Q}\setminus\{\bar{\eta}=0,\bar{\eta}=\frac{\epsilon_1}{2}\}}f\leq C,$ where $C$ depends only on $T^*$ and  $M_4$.\smallskip

\no(B) Next we will prove that $\max_{\mathcal{\bar{Q}}} f$ is not attained on the boundary $\{\bar{\eta}=0\}.$ Thanks to $\partial_{\bar{\eta}} w\mid_{\bar{\eta}=0}=0$, we have $\partial_{\bar{\eta}\bar{\xi}}^2 w\mid_{\bar{\eta}=0}=0.$ Therefore,
\begin{align*}
   \partial_{\bar{\eta}}f\mid _{\bar{\eta}=0}=1>0.
\end{align*}
So, $\max_{\mathcal{\bar{Q}}} f$ is not attained on the boundary $\{\bar{\eta}=0\}.$\smallskip

\no(C) We will show that $\max_{\mathcal{Q}} f$ is not attained in $\mathcal{Q}$ by proving that
$$
Lf>0\quad \quad \text{in} \quad\mathcal{Q}.
$$

In what follows, we denote by $\beta_i(i=1,\cdots 10)$ some small constants determined later, and by $\gamma_i(i=1,2,3,4)$ some constants depending only on $a$, and by $C$ a constant depending on $a, A, T^*, \beta_i(i=1,\cdots, 10), M_1, M_2, M_3, B$, and by $C_{\beta_i}$ a constant depending only on $\beta_i, a, A, T^*$.

By direct calculations, we get
 \begin{align*}
&L(g_1g_2)=L(g_1)g_2+L(g_2)g_1+2w^2\partial_{\bar{\eta}}g_1\partial_{\bar{\eta}}g_2,\\
&L\partial_{\bar{\xi}}  w
=-2w\partial_{\bar{\xi}}w\partial_{\bar{\eta}\bar{\eta}} ^2 w  ,\quad L\partial_{\bar{\eta}}  w
=-2w\partial_{\bar{\eta}}w\partial_{\bar{\eta}\bar{\eta}} ^2 w +\partial_{\bar{\xi}}  w.
\end{align*}
Then we have
\begin{align*}
    L|\partial_{\bar{\xi}}  w|^2
&=-4w(\partial_{\bar{\xi}}w)^2\partial^2_{\bar{\eta}\bar{\eta}}  w +2w^2|\partial_{\bar{\xi}\bar{\eta}}^2w|^2
\\ L|\partial_{\bar{\eta}}  w|^2
&=2\partial_{\bar{\xi}}w\partial_{\bar{\eta}}  w-4w(\partial_{\bar{\eta}}w)^2\partial^2_{\bar{\eta}\bar{\eta}}  w +2w^2|\partial_{\bar{\eta}\bar{\eta}}^2w|^2,\\
 L(|\partial_{\bar{\xi}}  w|^2\bar{\eta}^2)
&=-4w(\partial_{\bar{\xi}}w)^2\bar{\eta}^2\partial^2_{\bar{\eta}\bar{\eta}}  w +2w^2|\partial_{\bar{\xi}\bar{\eta}}^2w|^2\bar{\eta}^2+2w^2|\partial_{\bar{\xi}}  w|^2+2w^22\bar{\eta}2\partial_{\bar{\xi}}  w\partial_{\bar{\xi}\bar{\eta}}^2w.
\end{align*}
By \eqref{2.8}, we have
\begin{align}\label{eq:w-dxi-rough}
 |\partial_{\bar{\xi}}w\bar{\eta}^3|\leq C  \Rightarrow
  |\partial_{\bar{\xi}}w|^{\frac{2}{3}}\bar{\eta}^2\leq C,\quad |\partial_{\bar{\xi}}w|^{\frac{1}{3}}\bar{\eta}\leq C .
\end{align}
Then by Young's inequality, we get
\begin{align}\label{beta1}
 L(|\partial_{\bar{\xi}}  w|^2\bar{\eta}^2)
 \geq 2w^2|\partial_{\bar{\xi}}w|^2-\beta_1 |\partial_{\bar{\xi}}w|^2- C_{\beta_1}|\partial_{\bar{\xi}}w|^{\frac{2}{3}}|\partial^2_{\bar{\eta}\bar{\eta}}  w |^2-   C_{\beta_1}\frac{|\partial^2_{\bar{\xi}\bar{\eta}}  w |^2}{|\partial_{\bar{\xi}}w|^{\frac{2}{3}}+1}-C.
\end{align}
Then by $\partial_{\bar{\eta}} \zeta=0$ and \eqref{eq:w-dxi-rough}, taking $\beta_1$ small depending only on $a$,  we have
\begin{align}\begin{split}\label{split}
 L((|\partial_{\bar{\xi}}  w|^2\bar{\eta}^2)\zeta^{10})
\geq &-  C_{\beta_1}|\partial_{\bar{\xi}}w|^{\frac{2}{3}}|\partial^2_{\bar{\eta}\bar{\eta}}  w |^2\zeta^{10}-  C_{\beta_1}\frac{|\partial^2_{\bar{\xi}\bar{\eta}}  w |^2}{|\partial_{\bar{\xi}}w|^{\frac{2}{3}}+1}\zeta^{10}
\\&+\gamma_1 |\partial_{\bar{\xi}}w|^2\zeta^{10}-C\zeta^{9}|\partial_{\bar{\xi}}w|^{\frac{4}{3}}
-\zeta^{10}C
\\ \geq &- C_{\beta_1}|\partial_{\bar{\xi}}w|^{\frac{2}{3}}|\partial^2_{\bar{\eta}\bar{\eta}}  w |^2\zeta^{10}- C_{\beta_1}\frac{|\partial^2_{\bar{\xi}\bar{\eta}}  w |^2}{|\partial_{\bar{\xi}}w|^{\frac{2}{3}}+1}\zeta^{10}\\  &+\gamma_1 |\partial_{\bar{\xi}}w|^2\zeta^{10}-C.\end{split}
\end{align}

Next we consider the second term in $f.$ By direct calculations, we have
\begin{align*}
L ((|\partial_{\bar{\xi}}  w|^2+1)^{\frac{2}{3}}\zeta^{10})\geq& \gamma_2 \frac{|\partial^2_{\bar{\xi}\bar{\eta}}  w |^2}{|\partial_{\bar{\xi}}w|^{\frac{2}{3}}+1}\zeta^{10}-\beta_2 |\partial_{\bar{\xi}}w|^2\zeta^{10}-C_{\beta_2}|\partial_{\bar{\xi}}w|^{\frac{2}{3}}|\partial^2_{\bar{\eta}\bar{\eta}}  w |^2\zeta^{10}\\&-C_{T^*}\zeta^{9}|\partial_{\bar{\xi}}w|^{\frac{4}{3}}-C
\\ \geq& \gamma_2 \frac{|\partial^2_{\bar{\xi}\bar{\eta}}  w |^2}{|\partial_{\bar{\xi}}w|^{\frac{2}{3}}+1}\zeta^{10}-\beta_2 |\partial_{\bar{\xi}}w|^2\zeta^{10}-C_{\beta_2}|\partial_{\bar{\xi}}w|^{\frac{2}{3}}|\partial^2_{\bar{\eta}\bar{\eta}}  w |^2\zeta^{10}-C.
\end{align*}
Now we consider the third term in $f$. By \eqref{bdetaw}, we have
\begin{align*}
\big[L (|\partial_{\bar{\xi}}  w|^2+B)^{\frac{1}{3}}\big]|\partial_{\bar{\eta}}  w|^2
\geq& -\beta_3|\partial_{\bar{\xi}}  w|^{\frac{2}{3}}|\partial_{\bar{\eta}\bar{\eta}}^2w|^2 -C_{\beta_3}|\partial_{\bar{\xi}}  w|^{\frac{2}{3}}-C\frac{|\partial_{\bar{\xi}\bar{\eta}}^2w| ^2}{(|\partial_{\bar{\xi}}  w|^2+B  )^{\frac{2}{3}}}\\ \geq& -\beta_3|\partial_{\bar{\xi}}  w|^{\frac{2}{3}}|\partial_{\bar{\eta}\bar{\eta}}^2w|^2 -\beta_4|\partial_{\bar{\xi}}  w|^{2}-C\frac{|\partial_{\bar{\xi}\bar{\eta}}^2w| ^2}{(|\partial_{\bar{\xi}}  w|^2+B  )^{\frac{2}{3}}}-C.
\end{align*}
Thanks to
\begin{align*}
2w^2\big(\partial_{\bar{\eta}} (|\partial_{\bar{\xi}}  w|^2+B)^{\frac{1}{3}}\big)2\partial_{\bar{\eta}}  w\partial^2_{\bar{\eta}\bar{\eta}}  w&=2w^2\frac{4\partial_{\bar{\xi}}w\partial_{\bar{\xi}\bar{\eta}}^2 w}{3(|\partial_{\bar{\xi}}  w|^2+B)^{\frac{2}{3}}}\partial_{\bar{\eta}}  w\partial^2_{\bar{\eta}\bar{\eta}}  w,
\end{align*}
we have
\begin{align*}|
2w^2\big(\partial_{\bar{\eta}} (|\partial_{\bar{\xi}}  w|^2+B)^{\frac{1}{3}}\big)2\partial_{\bar{\eta}}  w\partial^2_{\bar{\eta}\bar{\eta}}  w|
&\leq\frac{C|\partial_{\bar{\xi}\bar{\eta}}^2 w|}{(|\partial_{\bar{\xi}}  w|^2+B)^{\frac{1}{6}}}|\partial^2_{\bar{\eta}\bar{\eta}}  w|\leq\frac{\beta_5|\partial_{\bar{\xi}\bar{\eta}}^2 w|^2}{(|\partial_{\bar{\xi}}  w|^2+B)^{\frac{1}{3}}}+C_{\beta_5}|\partial^2_{\bar{\eta}\bar{\eta}}  w|^2.\end{align*}
On the other hand,
\begin{align*}
\big[L|\partial_{\bar{\eta}}  w|^2\big](|\partial_{\bar{\xi}}  w|^2+B)^{\frac{1}{3}}
&\geq \big[- C|\partial_{\bar{\xi}}w|-C|\partial^2_{\bar{\eta}\bar{\eta}}  w | +2w^2|\partial_{\bar{\eta}\bar{\eta}}^2w|^2\big](|\partial_{\bar{\xi}}  w|^2+B)^{\frac{1}{3}}\\&\geq\big[- C|\partial_{\bar{\xi}}w|-C +w^2|\partial_{\bar{\eta}\bar{\eta}}^2w|^2\big](|\partial_{\bar{\xi}}  w|^2+B)^{\frac{1}{3}} \\&\geq \gamma_3|\partial_{\bar{\eta}\bar{\eta}}^2w|^2|\partial_{\bar{\xi}}  w|^{\frac{2}{3}}-C|\partial_{\bar{\xi}}  w|^{\frac{5}{3}}-C\\&\geq \gamma_3|\partial_{\bar{\eta}\bar{\eta}}^2w|^2|\partial_{\bar{\xi}}  w|^{\frac{2}{3}}- \beta_6|\partial_{\bar{\xi}}w|^2-C.
\end{align*}
Summing up, we obtain
\begin{align*}
&L\big[(|\partial_{\bar{\xi}}  w|^2+B)^{\frac{1}{3}}|\partial_{\bar{\eta}}  w|^2\big]
\\ &\geq-\beta_3|\partial_{\bar{\xi}}  w|^{\frac{2}{3}}|\partial_{\bar{\eta}\bar{\eta}}^2w|^2 -\beta_4|\partial_{\bar{\xi}}  w|^{2}-C_1\frac{|\partial_{\bar{\xi}\bar{\eta}}^2w| ^2}{(|\partial_{\bar{\xi}}  w|^2+B  )^{\frac{2}{3}}}-\frac{\beta_5|\partial_{\bar{\xi}\bar{\eta}}^2 w|^2}{(|\partial_{\bar{\xi}}  w|^2+B)^{\frac{1}{3}}}\\&\quad-C_{\beta_5}|\partial^2_{\bar{\eta}\bar{\eta}}  w|^2+\gamma_3|\partial_{\bar{\eta}\bar{\eta}}^2w|^2|\partial_{\bar{\xi}}  w|^{\frac{2}{3}}-C- \beta_6|\partial_{\bar{\xi}}w|^2\\ &\geq \gamma_4|\partial_{\bar{\eta}\bar{\eta}}^2w|^2|\partial_{\bar{\xi}}  w|^{\frac{2}{3}}-(\beta_4+ \beta_6)|\partial_{\bar{\xi}}  w|^{2}-\frac{(\beta_5+\beta_7)|\partial_{\bar{\xi}\bar{\eta}}^2 w|^2}{(|\partial_{\bar{\xi}}  w|^2+B)^{\frac{1}{3}}}-C_{\beta_5}|\partial^2_{\bar{\eta}\bar{\eta}}  w|^2-C
\end{align*}
by taking $B$ large and $\beta_3$ small so that
\begin{equation}\label{o1}
  \frac{C_1}{(|\partial_{\bar{\xi}}  w|^2+B)^{\frac{1}{3}}}\leq \beta_7,\quad\,\beta_3\ll \gamma_3.
\end{equation}
Hence,\begin{align*}
L [(|\partial_{\bar{\xi}}  w|^2+B)^{\frac{1}{3}}|\partial_{\bar{\eta}}  w|^2\zeta^{10}] \geq&\gamma_4|\partial_{\bar{\eta}\bar{\eta}}^2w|^2|\partial_{\bar{\xi}}  w|^{\frac{2}{3}}\zeta^{10}-C -(\beta_4+ \beta_6)|\partial_{\bar{\xi}}  w|^{2}\zeta^{10}\\&-\frac{(\beta_5+\beta_7)|\partial_{\bar{\xi}\bar{\eta}}^2 w|^2}{(|\partial_{\bar{\xi}}  w|^2+B)^{\frac{1}{3}}}\zeta^{10}-C_{\beta_5}|\partial^2_{\bar{\eta}\bar{\eta}}  w|^2\zeta^{10}-C\zeta^{9}(|\partial_{\bar{\xi}}  w|^2+B)^{\frac{1}{3}}
\\\geq &\gamma_4|\partial_{\bar{\eta}\bar{\eta}}^2w|^2|\partial_{\bar{\xi}}  w|^{\frac{2}{3}}\zeta^{10} -(\beta_4+ \beta_6+\beta_{10})|\partial_{\bar{\xi}}  w|^{2}\zeta^{10}\\&-\frac{(\beta_5+\beta_7)|\partial_{\bar{\xi}\bar{\eta}}^2 w|^2}{(|\partial_{\bar{\xi}}  w|^2+B)^{\frac{1}{3}}}\zeta^{10}-C_{\beta_5}|\partial^2_{\bar{\eta}\bar{\eta}}  w|^2\zeta^{10}-C.
\end{align*}

For the fourth term in $f$, we have
\begin{align*}
L(|\partial_{\bar{\eta}}  w|^2\zeta^{10})
&\geq (- C|\partial_{\bar{\xi}}w|-C|\partial^2_{\bar{\eta}\bar{\eta}}  w | +2w^2|\partial_{\bar{\eta}\bar{\eta}}^2w|^2)\zeta^{10}-\zeta^{9}C\\
&\geq - \beta_9|\partial_{\bar{\xi}}w|^2\zeta^{10}-C+w^2|\partial_{\bar{\eta}\bar{\eta}}^2w|^2\zeta^{10}.
\end{align*}
For the last two terms in $f$, we have
\begin{align*}
   L( -M_4\bar{t}+\bar{\eta})=M_4.
\end{align*}

Finally, let us fix the coefficients in the following order.\smallskip
\begin{itemize}

\item[(1)] Fix $\beta_1$ small depending on $a,A$. See \eqref{beta1}.

\item[(2)] Fix $M_1$ large such that $\frac{M_1}{100}>\frac{C_{\beta_1}}{\gamma_2}.$

\item[(3)] Fix $\beta_2$ such that $\beta_2<\frac{\gamma_1}{100M_1}.$

\item[(4)]  Fix $M_2$ large such that $M_2>\frac{M_1C_{\beta_2}+C_{\beta_1}}{\gamma_4}.$

 \item[(5)]  Fix $\beta_5,\beta_7$ such that $\beta_5+\beta_7<\frac{M_1}{2M_2}\gamma_2$ and then fix $B$ by \eqref{o1} accordingly.

\item[(6)] Fix $\beta_4,\beta_6,\beta_{10}$ such that $\beta_4+\beta_6+\beta_{10}<\frac{\gamma_1}{100M_2}.$

\item[(7)] Fix $M_3$ large such that $M_3>\frac{C_{\beta_5}}{a^2}M_2.$

\item[(8)] Fix $\beta_9$ such that $\beta_9<\frac{\gamma_1}{100M_3}.$

\item[(9)] Fix $M_4$ large such that $M_4>C$.

\end{itemize}

 \smallskip

 With such choices of the constants, we can deduce (C). Combining (A), (B) and (C), we infer that $f\leq C$. Hence, $$M_1(|\partial_{\bar{\xi}}  w|^2+1)^{\frac{2}{3}} \zeta^{10}\leq C.$$
\\ Restricting to the point $\bar{x}_0=(\eta_0,0,0)$ and transforming back to the coordinate $(\eta,\xi,t)$, we have $|\partial_{\xi}w(x_0)|\leq C.$ Since $C$ is independent of the choice of $x_0,$ we have
$ |\partial_{\xi}w|\leq C.$
\end{proof}

%

\section{Proof of Theorem \ref{thm:main1} and Theorem \ref{thm:main2}}

Theorem \ref{thm:main2} is a direct consequence of Proposition \ref{prop:mono} and Proposition \ref{prop:grad}. Let us  prove Theorem \ref{thm:main1}.\smallskip

The local well-posedness part has been essentially  proved in \cite{CWZ}.  So, it suffices to prove the blow-up criterion.
We introduce
\beno
&&\cE(t)=\|w\|_{H^{3,0}_{\nu}}^2+\|\tu\|_{H^{2,0}_\om}^2+\|\tu\|_{H_\mu^{1,0}}^2+\|\pa_y\tu\|_{H_\mu^{1,0}}^2+\|\tu_t\|_{H_\mu^{1,0}}^2,\\
&&\cD(t)=\|\pa_yw\|_{H^{3,0}_{\nu}}^2+\|\pa_y\tu\|^2_{H^{2,0}_\om}+\|\pa_y\tu\|_{H_\mu^{1,0}}^2+\|\tu_t\|_{H_\mu^{1,0}}^2+\|\pa_y\tu_t\|_{H_\mu^{1,0}}^2.
\eeno
Then we deduce from Proposition \ref{prop:energy-w-s}, Proposition \ref{prop:u-energy-s}  and Lemma \ref{lem:relation} that
\begin{align}\label{eq:energy-S1}
\f d {dt}\cE(t)+\cD(t)\le& \cP(A(t))\cE(t)+\cP(A(t))\cE(t)^\f12\|\tu\|_{H^{2,2}_\mu}.
\end{align}
Here $\cP(\cdot)$ is some increasing function. By  (\ref{eq:interpolation}), we have
\beno
\|\tu\|_{H^{2,2}_\mu}\le \|\tu\|_{H^{3,1}_\mu}^\f12\|\tu\|_{H^{1,3}_\mu}^\f12,
\eeno
which along with Lemma \ref{lem:relation} implies that
\beno
\|\tu\|_{H^{2,2}_\mu}\le \cP(A(t))\cE(t)^\f12+\|\pa_y\tu_t\|_{H^{1,0}_\mu}
.\eeno
Then we infer from (\ref{eq:energy-S1}) and  that
\begin{align*}
&\f d {dt}\cE(t)+\cD(t)\le \cP(A(t))\cE(t),
\end{align*}
from which and Gronwall's inequality, we deduce that
\beno
\cE(t)+\int_0^t\cD(s)ds\le C\cE(0)
\eeno
for any $t\in [0,T^*)$ if $\sup_{t\in [0,T^*)}A(t)\le C$. This implies that the solution can be  extended after $t=T^*$. The proof of Theorem \ref{thm:main1} is completed.
\ef\smallskip

\section*{Acknowledgments}

The authors thank Professors Zhouping Xin and Liqun Zhang for their helpful comments. Z. Zhang is partially supported by NSF of China under Grant 11425103.
\medskip

 \end{document}